\newcommand{\esssup}{\mathop{\mathrm{esssup}}}
\newcommand{\essinf}{\mathop{\mathrm{essinf}}}
\newtheorem{theorem}{Theorem}[section]
\newtheorem{lemma}[theorem]{Lemma}
\newtheorem{definition}[theorem]{Definition}
\newtheorem{proposition}[theorem]{Proposition}
\newtheorem{corollary}[theorem]{Corollary}
\newtheorem{remark}[theorem]{Remark}
\numberwithin{equation}{section}
\begin{document}

\title{Nash equilibrium payoffs for stochastic  differential games with reflection}

\author{Qian Lin \footnote{ {\it Email address}:
linqian1824@163.com}
\\
{\small  Center for Mathematical Economics, Bielefeld University,
Postfach 100131, 33501 Bielefeld, Germany }}

\maketitle

\noindent{\bf Abstract}\ In this paper, we investigate Nash
equilibrium payoffs for nonzero-sum stochastic differential games
with reflection. We obtain an existence theorem and a
characterization theorem of Nash equilibrium payoffs for nonzero-sum
stochastic differential games with nonlinear cost functionals
defined by doubly controlled reflected backward stochastic
differential equations.

\vskip3mm

\noindent{\bf Keywords}\ Nash equilibrium payoffs $\cdot$ Stochastic
differential games $\cdot$  Backward stochastic differential
equations $\cdot$ Dynamic programming principle

\section{Introduction}
In this paper, we study Nash equilibrium payoffs for nonzero-sum
stochastic differential games whose cost functionals are defined by
reflected  backward stochastic differential equations (RBSDEs, for
short). Fleming and Souganidis \cite{FS1989}
 were the first in a rigorous way to study zero-sum stochastic differential
 games. Since the pioneering work of Fleming and Souganidis
 \cite{FS1989},
 stochastic differential games have been investigated by many authors.
Recently, Buckdahn and Li \cite{BL2008} generalized  the
 results of Fleming and Souganidis \cite{FS1989}   by using a Girsanov transformation
argument  and a backward stochastic differential equation  (BSDE,
for short) approach.  The reader interested in this topic can be
referred to   Buckdahn, Cardaliaguet and Quincampoix \cite{BCQ2011},
Buckdahn and Li \cite{BL2008}, Fleming and Souganidis \cite{FS1989}
and the references therein.

El Karoui,  Kapoudjian,  Pardoux, Peng and Quenez \cite{EKPPQ1997}
introduced RBSDEs. By virtue of RBSDEs they gave a probabilistic
representation for the  solution of an obstacle problem for a
nonlinear parabolic partial differential equation. This kind of
RBSDEs also has many applications in finance, stochastic
differential games and stochastic optimal control problem. In
\cite{EPQ1997}, El Karoui, Pardoux and Quenez showed that the price
of an American option corresponds to the solution of a RBSDE.
Buckdahn and Li \cite{BL2007} considered zero-sum stochastic
differential games with reflection.
 Wu and Yu \cite{WY2008}
studied  one kind of stochastic recursive optimal control problem
with the obstacle constraint for the cost functional defined by a
RBSDE.

Buckdahn, Cardaliaguet and Rainer \cite{BCR2004} studied Nash
equilibrium payoffs for stochastic differential games. Recently, Lin
\cite{L2011,L2011a} studied Nash equilibrium payoffs for stochastic
differential games whose cost functionals are defined by doubly
controlled BSDEs.   Lin \cite{L2011,L2011a} generalizes the earlier
result by Buckdahn, Cardaliaguet and Rainer \cite{BCR2004}. In
\cite{L2011,L2011a},  the admissible control processes can depend on
events  occurring before the beginning of the stochastic
differential game, thus, the cost functionals are not necessarily
deterministic. Moreover, the  cost functionals are defined with the
help of BSDEs, and thus they are nonlinear. The objective of  this
paper is to  generalize the above results, i.e., investigate Nash
equilibrium payoffs for nonzero-sum stochastic differential games
with reflection. However, different from the earlier results by
Buckdahn, Cardaliaguet and Rainer \cite{BCR2004} and Lin
\cite{L2011,L2011a}, we shall study Nash equilibrium payoffs for
stochastic differential games with the running cost functionals
defined with the help of RBSDEs. For this, we first study the
properties of the value functions of stochastic differential games
with reflection. In comparison with Buckdahn and Li \cite{BL2007},
we shall study nonzero-sum stochastic differential games of the type
of {\it strategy against strategy}, while Buckdahn and Li
\cite{BL2007} considered the games of the type {\it strategy against
control}. Combining the arguments in Buckdahn, Cardaliaguet and
Quincampoix \cite{BCQ2011} and Buckdahn and Li \cite{BL2007}, we can
get the results in Section 4. Then we investigate Nash equilibrium
payoffs for stochastic differential games with reflection.  Our
results generalizes the results in Lin \cite{L2011} to the obstacle
constraint case. In Lin \cite{L2011}, the cost functionals of both
players do  not  have any obstacle constraint, so our results in
Section 5 are more general. The proof of our results is mainly based
on the techniques of mathematical analysis and the properties of
BSDEs with reflection. The presence of the obstacle constraint adds
us the difficulty of estimates and a supplementary complexity.

The paper is organized as follows. In Section 2, we
  introduce some notations and present some preliminary results
   concerning reflected backward stochastic differential equations,
    which we will need in what follows.
In Section 3, we introduce nonzero-sum stochastic differential games
with reflection and obtain the associated dynamic programming
principle. In Section 4 we give a probabilistic interpretation of
systems of Isaacs equations with obstacle.
  In Section 5, we obtain  the main results of this paper, i.e., an existence theorem
  and a characterization theorem of Nash equilibrium payoffs for nonzero-sum
stochastic differential games with reflection. In Section 6, we give
the proof of Theorem \ref{t1}.

\section{Preliminaries}\label{NS1}

The objective of this section is to recall some results about
RBSDEs, which are useful in what follows.   Let $B=\{B_t,\ 0\leq
t\leq T\}$ be a $d-$dimensional standard Brownian motion defined on
a probability space $(\Omega, \mathcal {F}, \mathbb{P})$. The
filtration $\mathbb{F}=\{\mathcal {F}_t,\ 0\leq t\leq T\}$ is
generated by  $B$ and augmented by all $\mathbb{P}-$null sets, i.e.,
\begin{eqnarray*}
\mathcal {F}_t=\sigma\Big\{B_r, 0\leq r \leq t \Big\}\vee\mathcal
{N}_{\mathbb{P}},
\end{eqnarray*}
where $\mathcal {N}_{\mathbb{P}}$ is the set of all
$\mathbb{P}-$null sets. Let us introduce some spaces:
\begin{eqnarray*}
&&\ L^2 (\Omega, \mathcal {F}_{T}, \mathbb{P}; \mathbb{R}^{n}) =
\bigg\{ \xi\ |\  \xi: \Omega\rightarrow\mathbb{R}^{n}\  \mbox {is
an} \ \mathcal {F}_{T} \mbox {-measurable random variable}  \mbox{
such
that}\ \mathbb{E}[|\xi|^2]<+\infty \bigg\},\\
&&\ S^2 (0,T; \mathbb{R})=\bigg\{ \varphi\ |\
\varphi:\Omega\times[0, T]\rightarrow\mathbb{R} \ \mbox {is a
predictable  process such that}  \ \mathbb{E}[\sup_{0\leq t\leq
T}|\varphi_t|^2]<+\infty \bigg\},\\
 &&\ \mathcal {H}^2 (0,T;
\mathbb{R}^{d}) =\bigg\{ \varphi\ |\ \varphi:\Omega\times[0,
T]\rightarrow\mathbb{R}^{d}\ \mbox { is a predictable process such
that } \mathbb{E}\int_0^T|\varphi_t|^2dt<+\infty \bigg\}.
\end{eqnarray*}

We consider  the following  one barrier reflected BSDE with data
$(f,\xi,S)$:
\begin{eqnarray}\label{e15}
\left\{
\begin{array}{rcl}
Y_t&=&\xi+\displaystyle\int_t^Tf(s,Y_s,Z_s)ds+K_T-K_t-\int_t^TZ_sdB_s,\\
Y_t&\geq& S_t, \quad \quad t\in[0,T],
\\ K_0&=&0, \hskip3mm \displaystyle\int_0^T(Y_r-S_r)dK_r=0,
\end{array}
\right.
\end{eqnarray}
where $\{K_t\}$ is an adapted, continuous and increasing process,
$f:\Omega\times [0,T]\times
\mathbb{R}\times\mathbb{R}^d\rightarrow\mathbb{R}$ and we make the
following assumptions:

 $(H2.1)$   $f(\cdot,0,0)\in \mathcal {H}^2 (0,T; \mathbb{R})$,

  $(H2.2)$
There exists  some constant $L>0$ such that for all
$y,y'\in\mathbb{R}$ and  $z,z'\in\mathbb{R}^d$,
\begin{eqnarray*}
|f(t,y,z)-f(t,y',z')|\leq L(|y-y'|+|z-z'|),
\end{eqnarray*}

 $(H2.3)$ $\{S_t\}_{t\in[0,T]}$ is a continuous process such that $\{S_{t}\}_{0\leq t\leq
T}\in S^2 (0,T; \mathbb{R})$.\vskip2mm

The following the existence and uniqueness theorem for solutions of
equation (\ref{e15}) was established in \cite{EKPPQ1997}.
\begin{lemma}
Under the assumptions $(H2.1)$-$(H2.3)$, if $ \xi \in L^2 (\Omega,
\mathcal {F}_{T}, \mathbb{P}; \mathbb{R})$ and  $S_{T}\leq \xi$,
then  equation (\ref{e15}) has a unique solution $(Y,Z,K)$.
\end{lemma}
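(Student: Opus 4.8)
The plan is to prove uniqueness first --- it isolates the sign property of the reflecting term that recurs throughout --- and then to obtain existence by a Picard-type fixed point argument built on the Snell envelope. For uniqueness, let $(Y^i,Z^i,K^i)$, $i=1,2$, both solve (\ref{e15}). Applying It\^o's formula to $|Y^1_t-Y^2_t|^2$ on $[t,T]$ and taking expectations after localization, the stochastic integral vanishes, the generator term is handled by the Lipschitz bound $(H2.2)$ together with $2ab\le \frac{1}{\epsilon}a^2+\epsilon b^2$, and the reflecting contribution $2\,\mathbb{E}\int_t^T(Y^1_s-Y^2_s)\,d(K^1_s-K^2_s)$ is nonpositive: on the support of $dK^1$ one has $Y^1_s=S_s\le Y^2_s$, so $\int(Y^1-Y^2)\,dK^1\le0$, and symmetrically $-\int(Y^1-Y^2)\,dK^2\le0$. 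Gronwall's lemma then gives $Y^1\equiv Y^2$, and the equation forces $Z^1\equiv Z^2$ and $K^1\equiv K^2$.

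For existence I would work on the Banach space $\mathcal{B}=S^2(0,T;\mathbb{R})\times\mathcal{H}^2(0,T;\mathbb{R}^d)$ with the equivalent norm $\|(Y,Z)\|_\beta^2=\mathbb{E}\int_0^T e^{\beta s}(|Y_s|^2+|Z_s|^2)\,ds$ and define $\Phi(y,z)=(Y,Z)$, where $(Y,Z,K)$ solves (\ref{e15}) with the \emph{frozen} generator $g(s):=f(s,y_s,z_s)$, which no longer depends on the unknown. For this frozen obstacle problem solvability is classical: setting, with $\mathcal{T}_{t,T}$ the set of stopping times valued in $[t,T]$,
\[
Y_t=\esssup_{\tau\in\mathcal{T}_{t,T}}\mathbb{E}\Big[\int_t^\tau g(s)\,ds+S_\tau\mathbf{1}_{\{\tau<T\}}+\xi\mathbf{1}_{\{\tau=T\}}\,\Big|\,\mathcal{F}_t\Big],
\]
the process $Y_t+\int_0^t g(s)\,ds$ is the Snell envelope of the shifted obstacle; the hypotheses $(H2.1)$, $(H2.3)$, $\xi\in L^2$ and the compatibility $S_T\le\xi$ guarantee $Y\in S^2$, the Doob--Meyer decomposition produces a continuous increasing $K$ with $\int_0^T(Y_r-S_r)\,dK_r=0$, and martingale representation produces $Z\in\mathcal{H}^2$. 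One then derives the a priori estimate
\[
\mathbb{E}\Big[\sup_{0\le t\le T}|Y_t|^2+\int_0^T|Z_s|^2\,ds+K_T^2\Big]\le C\,\mathbb{E}\Big[|\xi|^2+\Big(\int_0^T|g(s)|\,ds\Big)^2+\sup_{0\le t\le T}|S_t|^2\Big],
\]
the bound on $K_T$ being read off the equation once $Y$ and $Z$ are controlled. Finally, writing the equations for two inputs and applying It\^o to $e^{\beta s}|Y^1_s-Y^2_s|^2$ --- again using $\int(Y^1-Y^2)\,d(K^1-K^2)\le0$ --- gives $\|\Phi(y^1,z^1)-\Phi(y^2,z^2)\|_\beta^2\le\frac{C}{\beta}\|(y^1,z^1)-(y^2,z^2)\|_\beta^2$, so $\Phi$ is a contraction for $\beta$ large and its fixed point is the desired solution. (An alternative is the penalization scheme $Y^n_t=\xi+\int_t^T f(s,Y^n_s,Z^n_s)\,ds+n\int_t^T(Y^n_s-S_s)^-\,ds-\int_t^T Z^n_s\,dB_s$, passed to the monotone limit.)

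The main obstacle is that $K$ has no explicit expression in terms of the data, so none of the a priori or contraction estimates can bound $dK$ directly. The device that unlocks all three steps --- uniqueness, the a priori bounds, the contraction --- is the Skorokhod flatness condition $\int_0^T(Y_r-S_r)\,dK_r=0$, which both supplies the sign $\int(Y^1-Y^2)\,d(K^1-K^2)\le0$ and lets one estimate $K_T$ a posteriori from already-controlled quantities. A secondary point is checking that the shifted Snell envelope is continuous and lies in $S^2$, which is exactly where the $L^2$-integrability of $\xi$, $f(\cdot,0,0)$, $S$ and the compatibility $S_T\le\xi$ enter.
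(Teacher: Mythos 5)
Your proposal is correct and is essentially the standard argument of El Karoui, Kapoudjian, Pardoux, Peng and Quenez, which is exactly the source the paper cites for this lemma (the paper itself gives no proof): uniqueness via It\^o's formula together with the sign $\int(Y^1-Y^2)\,d(K^1-K^2)\le 0$ coming from the Skorokhod condition, and existence via a contraction on $S^2\times\mathcal{H}^2$ whose frozen-generator step is solved by the Snell envelope and Doob--Meyer decomposition (with penalization as the known alternative). Nothing further is needed.
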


We refer to \cite{EKPPQ1997} and \cite{WY2008} for the following two
estimates.
\begin{lemma}\label{l1}
Let the assumptions $(H2.1)$-$(H2.3)$ hold and let $(Y,Z,K)$ be the
solution of the
 reflected BSDE (\ref{e15}) with data $(\xi,g,S)$. Then there exists a positive constant $C$
such that
\[
\mathbb{E}[\sup_{t\leq s\leq T }
Y_s^2+\int_t^T|Z_s|^2+|K_T-K_t|^2\Big| \mathcal {F}_t]\leq
C\mathbb{E}[\xi^2+\Big(\int_t^T g(s,0,0)ds\Big)^2+\sup_{t\leq s\leq
T} S_s^2 \Big| \mathcal {F}_t].
\]
\end{lemma}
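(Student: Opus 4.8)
The plan is to run the classical a priori estimate for one-barrier reflected BSDEs from \cite{EKPPQ1997}, keeping careful track of the increasing process $K$. Fix $t\in[0,T]$ and write $(Y,Z,K)$ for the solution with data $(\xi,g,S)$. First I would apply It\^o's formula to $|Y_s|^2$ on the interval $[r,T]$ for $r\in[t,T]$, obtaining
\[
|Y_r|^2 + \int_r^T |Z_s|^2\,ds = |\xi|^2 + 2\int_r^T Y_s\, g(s,Y_s,Z_s)\,ds + 2\int_r^T Y_s\,dK_s - 2\int_r^T Y_s Z_s\,dB_s.
\]
The crucial point is the term $\int_r^T Y_s\,dK_s$: using the Skorokhod flat-off condition $\int_0^T (Y_u-S_u)\,dK_u=0$ (and $dK\ge 0$) it equals $\int_r^T S_s\,dK_s \le \int_r^T S_s^+\,dK_s$, which by Young's inequality is bounded by $\tfrac{1}{2\varepsilon}\sup_{t\le s\le T}S_s^2 + \tfrac{\varepsilon}{2}|K_T-K_t|^2$ for any $\varepsilon>0$.

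Next I would control the generator term via the Lipschitz assumption $(H2.2)$: $|g(s,Y_s,Z_s)|\le |g(s,0,0)| + L(|Y_s|+|Z_s|)$, so $2|Y_s|\,|g(s,Y_s,Z_s)| \le C_L |Y_s|^2 + \tfrac14|Z_s|^2 + |g(s,0,0)|^2$, with the $\tfrac14|Z_s|^2$ part absorbed into the left-hand side. For the increasing process itself I would go back to the equation, $K_T-K_t = Y_t-\xi-\int_t^T g(s,Y_s,Z_s)\,ds + \int_t^T Z_s\,dB_s$, square it, and use $(a+b+c+d)^2\le 4(a^2+b^2+c^2+d^2)$ together with the Lipschitz bound to get $|K_T-K_t|^2 \le C\big(|Y_t|^2 + |\xi|^2 + \int_t^T|g(s,0,0)|^2\,ds + \int_t^T|Y_s|^2\,ds + \int_t^T|Z_s|^2\,ds + |\int_t^T Z_s\,dB_s|^2\big)$. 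After taking conditional expectation $\mathbb E[\cdot\mid\mathcal F_t]$ (localizing the stochastic integrals by stopping times if needed so they have zero conditional mean), this shows $|K_T-K_t|^2$ is dominated by quantities already appearing, so one may choose $\varepsilon$ small and absorb the $\tfrac{\varepsilon}{2}|K_T-K_t|^2$ coming from the Skorokhod step.

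Taking $\mathbb E[\cdot\mid\mathcal F_t]$ in the It\^o identity with $r=t$ then yields, via Gronwall's inequality in its integral form, a bound on $\mathbb E[\,|Y_t|^2\mid\mathcal F_t]$ and on $\mathbb E[\int_t^T|Z_s|^2\,ds\mid\mathcal F_t]$ by $C\,\mathbb E\big[\xi^2 + (\int_t^T g(s,0,0)\,ds)^2 + \sup_{t\le s\le T}S_s^2 \mid \mathcal F_t\big]$; here one also estimates $\int_t^T|g(s,0,0)|^2\,ds$ and $(\int_t^T g(s,0,0)\,ds)^2$ against each other by Cauchy--Schwarz, at the cost of a factor $T$. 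Finally, for the $\sup_{t\le s\le T}Y_s^2$ term I would take the supremum over $r\in[t,T]$ in the It\^o identity and apply the (conditional) Burkholder--Davis--Gundy inequality to $\sup_r|\int_r^T Y_sZ_s\,dB_s|$, bounding it by $\tfrac12\mathbb E[\sup_{t\le s\le T}Y_s^2\mid\mathcal F_t] + C\,\mathbb E[\int_t^T|Z_s|^2\,ds\mid\mathcal F_t]$ and absorbing; combining the three bounds gives the claim. The main obstacle is precisely the handling of the non-square-integrable-a-priori process $K$: one cannot estimate $\int S\,dK$ directly, so the bookkeeping must carry $|K_T-K_t|^2$ with a small coefficient on the right, bound it independently through the BSDE, and only then absorb it — and the order of absorptions (first $|Z|^2$ from the generator, then $|K_T-K_t|^2$, then $\sup Y^2$ via BDG) has to be respected.
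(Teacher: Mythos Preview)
The paper does not actually prove this lemma; it simply refers the reader to \cite{EKPPQ1997} and \cite{WY2008}. Your outline is precisely the standard argument from those references, and the overall strategy --- It\^o's formula, the Skorokhod flat-off condition to replace $Y$ by $S$ against $dK$, an independent control of $K_T-K_t$ from the equation itself, absorption with small $\varepsilon$, and finally BDG for the supremum --- is correct.

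There is, however, one concrete slip. You write that you will ``estimate $\int_t^T|g(s,0,0)|^2\,ds$ and $(\int_t^T g(s,0,0)\,ds)^2$ against each other by Cauchy--Schwarz, at the cost of a factor $T$.'' But Cauchy--Schwarz only gives
\[
\Big(\int_t^T g(s,0,0)\,ds\Big)^2 \le (T-t)\int_t^T|g(s,0,0)|^2\,ds,
\]
which is the wrong direction: the lemma has $\big(\int_t^T g(s,0,0)\,ds\big)^2$ on the right-hand side, so you must never produce $\int_t^T|g(s,0,0)|^2\,ds$ on the right. The fix is not to split $2|Y_s|\,|g(s,0,0)|$ pointwise; instead bound
\[
2\int_r^T |Y_s|\,|g(s,0,0)|\,ds \;\le\; 2\sup_{t\le s\le T}|Y_s|\int_t^T|g(s,0,0)|\,ds \;\le\; \varepsilon \sup_{t\le s\le T}|Y_s|^2 + \frac{1}{\varepsilon}\Big(\int_t^T|g(s,0,0)|\,ds\Big)^2,
\]
and absorb the $\varepsilon\sup_s|Y_s|^2$ together with the BDG step. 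The same device should be used when you square $K_T-K_t$: estimate $\big(\int_t^T g(s,Y_s,Z_s)\,ds\big)^2$ by $C\big((\int_t^T|g(s,0,0)|\,ds)^2 + (T-t)\int_t^T(|Y_s|^2+|Z_s|^2)\,ds\big)$ rather than passing through $\int_t^T|g(s,0,0)|^2\,ds$. With this adjustment your plan goes through and matches the cited references.
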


\begin{lemma}\label{l18}
We suppose that $(\xi,g,S)$ and
$(\xi^{\prime},g^{\prime},S^{\prime})$  satisfy the assumptions
$(H2.1)$-$(H2.3)$.  Let $(Y,Z,K)$ and
$(Y^{\prime},Z^{\prime},K^{\prime})$ be the solutions of the
reflected BSDEs (\ref{e15}) with data  $(\xi,g,S)$ and
$(\xi^{\prime},g^{\prime},S^{\prime})$, respectively. We let
\[
\Delta\xi=\xi-\xi^{\prime},\qquad \Delta g=g-g^{\prime},\qquad
\Delta S=S-S^{\prime},
\]
\[
\Delta Y=Y-Y^{\prime},\qquad \Delta Z=Z-Z^{\prime},\qquad \Delta
K=K-K^{\prime}.
\]
Then there exists a constant $C$ such that
\begin{eqnarray*}
&&\mathbb{E}\left[\sup_{t\leq s\leq T}|\Delta Y_s|^2
+\int_t^T|\Delta Z_s|^2ds +|\Delta K_T-\Delta K_t|^2\Big| \mathcal {F}_t\right]\\
&\leq& C\mathbb{E}\left[|\Delta\xi|^2+\left(\int_t^T |\Delta
g(s,Y_s,Z_s)|ds\right)^2\Big| \mathcal {F}_t
\right]+C\left(\mathbb{E}\left[\sup_{t\leq s\leq T}|\Delta
S_s|^2\Big| \mathcal {F}_t\right]\right)^{1/2}\Psi_{t,T}^{1/2},
\end{eqnarray*}
where
\begin{eqnarray*}
\Psi_{t,T} &=& \mathbb{E}\left[|\xi|^2+\left(\int_t^T|g(s,0,0)|ds
\right)^2 +\sup_{t\leq s\leq T} |S_s|^2\right.\\
&&\hskip 1.0cm \left.+|\xi^{\prime}|^2+\left(\int_t^T
|g^{\prime}(s,0,0)|ds \right)^2 +\sup_{t\leq s\leq T}
|S^{\prime}_s|^2\Big| \mathcal {F}_t\right].
\end{eqnarray*}
\end{lemma}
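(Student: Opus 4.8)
The approach is the classical one: write down the BSDE satisfied by the differences $(\Delta Y,\Delta Z,\Delta K)$, apply It\^o's formula to $|\Delta Y_s|^2$ on $[t,T]$, take conditional expectations given $\mathcal F_t$, and then control each term. First I would record that $\Delta Y_s = \Delta\xi + \int_s^T \big(g(r,Y_r,Z_r)-g'(r,Y_r',Z_r')\big)\,dr + \Delta K_T-\Delta K_s - \int_s^T \Delta Z_r\,dB_r$, and split the driver difference as $g(r,Y_r,Z_r)-g'(r,Y_r,Z_r) + g'(r,Y_r,Z_r)-g'(r,Y_r',Z_r')$, i.e.\ into $\Delta g(r,Y_r,Z_r)$ plus a term that is Lipschitz in $(\Delta Y,\Delta Z)$ by $(H2.2)$. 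It\^o's formula then gives, after taking $\mathbb E[\,\cdot\,|\mathcal F_t]$ and using that the stochastic integral is a martingale,
\[
\mathbb E\Big[|\Delta Y_s|^2 + \int_s^T |\Delta Z_r|^2\,dr\,\Big|\,\mathcal F_t\Big]
= \mathbb E\Big[|\Delta\xi|^2 + 2\int_s^T \Delta Y_r\,(\text{driver diff})\,dr + 2\int_s^T \Delta Y_r\,d\Delta K_r\,\Big|\,\mathcal F_t\Big].
\]
The Lipschitz part of the driver is absorbed in the standard way with Young's inequality ($2ab\le \varepsilon a^2 + \varepsilon^{-1}b^2$), the $\Delta g$ part contributes $\mathbb E[(\int_t^T|\Delta g(r,Y_r,Z_r)|\,dr)^2|\mathcal F_t]$ after another Young inequality, and Gronwall's lemma closes the $\Delta Y$/$\Delta Z$ loop, giving the first group of terms in the asserted bound.

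The main obstacle is the reflection term $\int_s^T \Delta Y_r\,d\Delta K_r$, which has no sign a priori. Here one uses the Skorokhod (minimality) conditions $\int_0^T (Y_r-S_r)\,dK_r=0$ and the analogous one for $K'$ together with $Y_r\ge S_r$, $Y_r'\ge S_r'$. Writing $\Delta Y_r = (Y_r-S_r)-(Y_r'-S_r')+\Delta S_r$, one gets
\[
\int_s^T \Delta Y_r\,d\Delta K_r \le \int_s^T \Delta S_r\,d(K_r - K_r')
\]
since $\int (Y_r-S_r)\,dK_r=0$, $\int(Y_r'-S_r')\,dK_r'=0$, $-\int(Y_r'-S_r')\,dK_r\le 0$, $-\int(Y_r-S_r)\,dK_r'\le 0$. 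Hence $2\int_s^T \Delta Y_r\,d\Delta K_r \le 2\sup_{t\le r\le T}|\Delta S_r|\,(K_T-K_t + K_T'-K_t')$, and applying Cauchy--Schwarz for the conditional expectation,
\[
\mathbb E\big[\,\cdot\,|\mathcal F_t\big] \le 2\Big(\mathbb E\big[\sup_{t\le r\le T}|\Delta S_r|^2\,\big|\,\mathcal F_t\big]\Big)^{1/2}\Big(\mathbb E\big[(K_T-K_t+K_T'-K_t')^2\,\big|\,\mathcal F_t\big]\Big)^{1/2}.
\]
Finally, Lemma \ref{l1} applied to each of $(Y,Z,K)$ and $(Y',Z',K')$ bounds $\mathbb E[(K_T-K_t)^2|\mathcal F_t]$ and $\mathbb E[(K_T'-K_t')^2|\mathcal F_t]$ by exactly the quantities appearing in $\Psi_{t,T}$; this is where the $\Psi_{t,T}^{1/2}$ factor enters. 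Taking $s=t$ and using the Burkholder--Davis--Gundy inequality to upgrade $\mathbb E[|\Delta Y_t|^2|\mathcal F_t]$ to $\mathbb E[\sup_{t\le s\le T}|\Delta Y_s|^2|\mathcal F_t]$ (estimating the stochastic integral's supremum by $\int_t^T|\Delta Z_r|^2\,dr$, already controlled) and adding the $|\Delta K_T-\Delta K_t|^2$ term from $\Delta K_s = \Delta Y_s - \Delta\xi - \int_s^T(\text{driver diff})\,dr + \int_s^T\Delta Z_r\,dB_r$ yields the stated inequality after collecting constants. The only delicate point beyond bookkeeping is the sign argument for the reflection term; everything else is a routine application of It\^o's formula, Young's and Cauchy--Schwarz inequalities, Gronwall's lemma, BDG, and Lemma \ref{l1}.
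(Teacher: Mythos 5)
Your proposal is correct and is essentially the standard argument: the paper itself gives no proof of this lemma, referring instead to \cite{EKPPQ1997} and \cite{WY2008}, where exactly this computation (It\^o's formula for $|\Delta Y|^2$, the sign argument for $\int \Delta Y\,d\Delta K$ via the Skorokhod conditions, Cauchy--Schwarz against the a priori bound of Lemma \ref{l1} on $K_T-K_t$ and $K'_T-K'_t$, then Gronwall and BDG) is carried out. Your treatment of the reflection term and the origin of the $\Psi_{t,T}^{1/2}$ factor are both right, so there is nothing to add beyond the routine bookkeeping you already flag.
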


We also need the following lemma. For its proof, the interested
reader can refer to \cite{EKPPQ1997} and \cite{HLM1997} for more
details.
\begin{lemma}\label{l8}
Let us  denote by
 $(Y^{1},Z^{1},K^{1})$ and $(Y^{2},Z^{2},K^{2})$  the solutions of BSDEs
with data $(f^{1},\xi^{1},S^{1})$ and $(f^{2},\xi^{2},S^{2})$,
 respectively. If  $\xi^{1}, \xi^{2}\in L^2 (\Omega, \mathcal {F}_{T},
\mathbb{P};\mathbb{R})$, $S^{1}$ and $S^{2}$ satisfy  $(H2.3)$,  and
$f^{1}$ and $f^{2}$ satisfy the assumptions $(H2.1)$ and $(H2.2)$,
and the following holds

  (i) $\xi^{1}\leq \xi^{2}$, $\mathbb{P}-a.s.,$

  (ii)  $f^{1}(t,y_{t}^{2}, z_{t}^{2}) \leq f^{2}(t, y_{t}^{2}, z_{t}^{2})$,
  $dtd\mathbb{P}-a.e.,$

  (iii) $S^{1}\leq S^{2}$, $\mathbb{P}-a.s.$\\
Then, we have $Y_{t}^{1} \leq Y_{t}^{2}$, $ a.s.$, for all $t \in
[0,T]$. Moreover, if

 (iv)  $f^{1}(t,y, z) \leq
f^{2}(t, y, z), (t,y, z)\in [0,T]\times \mathbb{R} \times
\mathbb{R}^{d},$  $dtd\mathbb{P}-a.e.,$

  (v) $S^{1}= S^{2}$, $\mathbb{P}-a.s.$\\
Then, $K^{1}_{t}\geq K^{2}_{t}$, $\mathbb{P}-a.s.,$ for all
$t\in[0,T],$ and $\{K^{1}_{t}- K^{2}_{t}\}_{t\in[0,T]}$ is a
increasing process.

\end{lemma}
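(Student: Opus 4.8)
The plan is to establish the two assertions in turn, using an It\^o--Tanaka computation for the ordering of the $Y$-components and the penalization scheme of \cite{EKPPQ1997} for the ordering of the $K$-components. For the first part, write $\delta Y=Y^{1}-Y^{2}$, $\delta Z=Z^{1}-Z^{2}$ and apply the Tanaka--Meyer formula to $t\mapsto((\delta Y_{t})^{+})^{2}$ on $[t,T]$. Since $\phi(x)=(x^{+})^{2}$ has $\phi'(x)=2x^{+}$, $\phi''(x)=2\mathbf{1}_{\{x\ge 0\}}$, and $dY^{i}_{s}=-f^{i}(s,Y^{i}_{s},Z^{i}_{s})\,ds-dK^{i}_{s}+Z^{i}_{s}\,dB_{s}$, this gives, with $f^{i}_{s}:=f^{i}(s,Y^{i}_{s},Z^{i}_{s})$,
\begin{align*}
((\delta Y_{t})^{+})^{2}+\int_{t}^{T}\mathbf{1}_{\{\delta Y_{s}>0\}}|\delta Z_{s}|^{2}\,ds
&=((\delta Y_{T})^{+})^{2}+2\int_{t}^{T}(\delta Y_{s})^{+}\big(f^{1}_{s}-f^{2}_{s}\big)\,ds\\
&\quad+2\int_{t}^{T}(\delta Y_{s})^{+}\,d(K^{1}_{s}-K^{2}_{s})-2\int_{t}^{T}(\delta Y_{s})^{+}\delta Z_{s}\,dB_{s}.
\end{align*}

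By (i) the terminal term vanishes. The Skorokhod condition forces $dK^{1}$ to be carried by $\{Y^{1}=S^{1}\}$, where $Y^{1}_{s}=S^{1}_{s}\le S^{2}_{s}\le Y^{2}_{s}$ by (iii) and the reflection constraint $Y^{2}\ge S^{2}$, so that $(\delta Y_{s})^{+}=0$ there and $\int_{t}^{T}(\delta Y_{s})^{+}dK^{1}_{s}=0$; since $K^{2}$ is nondecreasing and $(\delta Y_{s})^{+}\ge 0$, the remaining term $-2\int_{t}^{T}(\delta Y_{s})^{+}dK^{2}_{s}$ is $\le 0$. For the driver term, split $f^{1}_{s}-f^{2}_{s}=\big(f^{1}(s,Y^{1}_{s},Z^{1}_{s})-f^{1}(s,Y^{2}_{s},Z^{2}_{s})\big)+\big(f^{1}(s,Y^{2}_{s},Z^{2}_{s})-f^{2}(s,Y^{2}_{s},Z^{2}_{s})\big)$: the second bracket is $\le 0$ by (ii), while by $(H2.2)$ the first is bounded in absolute value by $L(|\delta Y_{s}|+|\delta Z_{s}|)$. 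Using $(\delta Y_{s})^{+}|\delta Y_{s}|=((\delta Y_{s})^{+})^{2}$ and Young's inequality to absorb half of $\mathbf{1}_{\{\delta Y_{s}>0\}}|\delta Z_{s}|^{2}$ into the left-hand side, then taking expectations after a standard localization of the stochastic integral, one obtains $\mathbb{E}[((\delta Y_{t})^{+})^{2}]\le C\int_{t}^{T}\mathbb{E}[((\delta Y_{s})^{+})^{2}]\,ds$; Gronwall's lemma forces $(\delta Y_{t})^{+}=0$ for every $t$, and path-continuity then yields $Y^{1}_{t}\le Y^{2}_{t}$, a.s., for all $t\in[0,T]$.

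For the second part, let $(Y^{i,n},Z^{i,n})$ solve the non-reflected BSDE with driver $f^{i}(s,y,z)+n(y-S_{s})^{-}$, where $S:=S^{1}=S^{2}$ by (v), and set $K^{i,n}_{t}=n\int_{0}^{t}(Y^{i,n}_{s}-S_{s})^{-}\,ds$; by \cite{EKPPQ1997} one has $Y^{i,n}\uparrow Y^{i}$ and $K^{i,n}\to K^{i}$ in $S^{2}(0,T;\mathbb{R})$ as $n\to\infty$. Since $\xi^{1}\le\xi^{2}$ by (i) and $f^{1}(s,y,z)\le f^{2}(s,y,z)$ for all $(y,z)$ by (iv), while the two penalty terms are identical, the classical comparison theorem for BSDEs gives $Y^{1,n}_{s}\le Y^{2,n}_{s}$ for all $s$; as $u\mapsto u^{-}$ is nonincreasing, $(Y^{1,n}_{s}-S_{s})^{-}\ge(Y^{2,n}_{s}-S_{s})^{-}$, so that
\[
K^{1,n}_{t}-K^{2,n}_{t}=n\int_{0}^{t}\big[(Y^{1,n}_{s}-S_{s})^{-}-(Y^{2,n}_{s}-S_{s})^{-}\big]\,ds
\]
is a nonnegative, nondecreasing function of $t$ which vanishes at $t=0$. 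Passing to the limit along a subsequence on which $K^{i,n}\to K^{i}$ uniformly on $[0,T]$, a.s., the limit $K^{1}_{t}-K^{2}_{t}$ is again nonnegative and nondecreasing in $t$, which is precisely the second claim.

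The routine parts are the localization and Gronwall bookkeeping in the first step and the verification that the penalized drivers satisfy $(H2.1)$--$(H2.2)$ so that the BSDE comparison theorem applies. The one genuinely delicate point is the handling of $dK^{1}$ on the set $\{\delta Y>0\}$ in the first step, where the barrier ordering $S^{1}\le S^{2}$ enters decisively; in the second step the only subtlety is that monotonicity in $t$ must survive the passage to the limit, which is immediate once the convergence is taken to be uniform in $t$.
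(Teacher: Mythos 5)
Your proof is correct, and it reproduces exactly the arguments the paper points to instead of proving the lemma itself (it cites El Karoui--Kapoudjian--Pardoux--Peng--Quenez and Hamad\`ene--Lepeltier--Matoussi): the It\^o--Tanaka computation on $((Y^1-Y^2)^+)^2$, with the Skorokhod condition and $S^1\le S^2$ killing the $dK^1$ term, is the standard RBSDE comparison proof, and the penalization scheme with the monotonicity of $y\mapsto(y-S)^-$ is the standard route to the ordering and monotonicity of $K^1-K^2$. No gaps worth flagging beyond the routine verifications you already identify.
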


\section{Nonzero-sum stochastic differential games with reflection and associated dynamic programming principle}

In what follows, we assume that  $U$ and  $V$ are two compact metric
spaces. The space $U$ is considered as the control state space of
the first player, and $V$ as that of the second one. We denote the
associated sets of admissible controls  by $\mathcal {U}$ and
$\mathcal {V}$, respectively. The set $\mathcal {U}$ is formed by
all $U$-valued $\mathbb{F}$-progressively measurable processes, and
$\mathcal {V}$ is the set of all $V$-valued
$\mathbb{F}$-progressively measurable processes.

For  given admissible controls $u(\cdot)\in\mathcal {U}$ and
$v(\cdot)\in\mathcal {V}$, let us consider the following control
system: for $t\in[0,T],$
\begin{equation}\label{eq1}
\left\{
\begin{array}{rcl}
dX^{t,x;u,v}_s &=& b(s,X^{t,x;u,v}_s,
u_s,v_s)ds+\sigma(s,X^{t,x;u,v}_s,u_s,v_s)dB_s,\qquad
s\in [t,T],\\
X^{t,x;u,v}_t &=& x\in\mathbb{R}^n,
\end{array}
\right.
\end{equation}
where
\[
b:[0,T]\times\mathbb{R}^n\times U\times
V\rightarrow\mathbb{R}^n,\qquad \sigma:[0,T]\times\mathbb{R}^n\times
U\times V\rightarrow\mathbb{R}^{n\times d}.
\]
We make the following assumptions:
\begin{list}{}{\setlength{\itemindent}{0cm}}
\item[$(H3.1)$] For all $x\in\mathbb{R}^{n}$, $b(\cdot,x,\cdot,\cdot)$ and $\sigma(\cdot,x,\cdot,\cdot)$
are continuous in $(t,u,v)$.
\item[$(H3.2)$] There exists a positive constant $L$ such that, for all $t\in [0,T], x,x^{\prime}\in\mathbb{R}^n$,
$u\in U, v\in V$,
\[
|b(t,x,u,v)-b(t,x^{\prime},u,v)|+|\sigma(t,x,u,v)-\sigma(t,x^{\prime},u,v)|\leq
L|x-x^{\prime}|.
\]
\end{list}
Under the above assumptions, for any $u(\cdot)\in \mathcal {U}$ and
$v(\cdot)\in \mathcal {V}$, the control system (\ref{eq1}) has a
unique strong solution $\{X^{t,x;u,v}_s,\ 0\le t\le s\le T\}$, and
we also have the following standard estimates for solutions.
\begin{lemma}\label{l4}
For all $p\geq 2$, there exists a  positive constant $C_{p}$ such
that, for all $t\in [0,T]$, $x, x^{\prime}\in \mathbb{R}^n$,
$u(\cdot)\in\mathcal {U}$ and $v(\cdot)\in \mathcal {V}$,
\begin{eqnarray*}
&&\mathbb{E}\left[\sup_{t\leq s\leq T}|X^{t,x;u,v}_s|^p\Big|\mathcal
{F}_t\right]\leq
C_{p}(1+|x|^p),\quad \mathbb{P}-a.s.,\\
&& \mathbb{E}\left[\sup_{t\leq s\leq
T}|X^{t,x;u,v}_s-X^{t,x';u,v}_s|^p\Big|\mathcal {F}_t\right] \leq
C_{p}|x-x^{\prime}|^p ,\quad \mathbb{P}-a.s.,
\end{eqnarray*}
where the constant $C_{p}$ only depends  on $p$, the Lipschitz
constant and the linear growth of $b$ and $\sigma$.
\end{lemma}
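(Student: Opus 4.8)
\textbf{Proof proposal for Lemma \ref{l4}.}
The plan is to establish both estimates by the classical route of combining the Burkholder--Davis--Gundy (BDG) inequality, H\"older's inequality in time, the growth/Lipschitz hypotheses $(H3.1)$--$(H3.2)$, and Gronwall's lemma, carried out under the conditional expectation $\mathbb{E}[\,\cdot\,|\mathcal{F}_t]$. For the first estimate, I would start from the integral form
\[
X^{t,x;u,v}_s = x + \int_t^s b(r,X^{t,x;u,v}_r,u_r,v_r)\,dr + \int_t^s \sigma(r,X^{t,x;u,v}_r,u_r,v_r)\,dB_r, \qquad s\in[t,T],
\]
apply the elementary inequality $|a_1+a_2+a_3|^p\le 3^{p-1}(|a_1|^p+|a_2|^p+|a_3|^p)$, and then take $\sup_{t\le s\le \rho}$ followed by $\mathbb{E}[\,\cdot\,|\mathcal{F}_t]$ for $\rho\in[t,T]$. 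The drift term is controlled by H\"older's inequality in $r$ on $[t,\rho]$ together with the linear growth of $b$ (which follows from $(H3.1)$--$(H3.2)$ plus continuity on the compact sets $U,V$); the diffusion term is handled by BDG applied to the conditional expectation and then the linear growth of $\sigma$. Writing $\varphi(\rho):=\mathbb{E}[\sup_{t\le s\le \rho}|X^{t,x;u,v}_s|^p\,|\,\mathcal{F}_t]$, this yields an inequality of the form $\varphi(\rho)\le C_p(1+|x|^p) + C_p\int_t^\rho \varphi(r)\,dr$, and Gronwall's lemma gives $\varphi(T)\le C_p(1+|x|^p)$, $\mathbb{P}$-a.s.

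For the second estimate I would set $\Delta X_s := X^{t,x;u,v}_s - X^{t,x';u,v}_s$, subtract the two integral equations, and repeat the same scheme: now the Lipschitz condition $(H3.2)$ bounds the increments $|b(r,X_r,u_r,v_r)-b(r,X'_r,u_r,v_r)|$ and the corresponding $\sigma$-increments by $L|\Delta X_r|$, so after BDG and H\"older one obtains $\psi(\rho)\le C_p|x-x'|^p + C_p\int_t^\rho \psi(r)\,dr$ with $\psi(\rho):=\mathbb{E}[\sup_{t\le s\le \rho}|\Delta X_s|^p\,|\,\mathcal{F}_t]$, and Gronwall again closes the argument.

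The one point that needs care — and which I expect to be the main technical obstacle — is justifying that $\varphi$ (and $\psi$) are finite \emph{a priori}, since Gronwall's lemma cannot be applied to a possibly infinite quantity. This is resolved by the standard localization: introduce the stopping times $\tau_N:=\inf\{s\ge t : |X^{t,x;u,v}_s|\ge N\}\wedge T$, run the estimates for the stopped process $X^{t,x;u,v}_{s\wedge\tau_N}$ (for which all moments are trivially finite), obtain a bound uniform in $N$, and then let $N\to\infty$ using Fatou's lemma; continuity of paths guarantees $\tau_N\uparrow T$. A secondary, minor subtlety is that we work with a \emph{conditional} BDG and a \emph{conditional} Gronwall inequality; both are routine given that $\mathbb{F}$ is the (augmented) Brownian filtration, but I would state them explicitly. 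The constant $C_p$ produced in this way depends only on $p$, $T$, and the Lipschitz and linear-growth constants of $b$ and $\sigma$, as claimed, and in particular is independent of $t$, $x$, $u(\cdot)$, and $v(\cdot)$.
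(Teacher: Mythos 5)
Your argument is correct and is precisely the standard BDG--H\"older--Gronwall scheme (with localization by stopping times and a conditional version of Gronwall's lemma) that the paper implicitly invokes: the paper states Lemma \ref{l4} as a ``standard estimate'' and gives no proof at all. Nothing in your proposal conflicts with the paper, and your attention to the a priori finiteness of $\varphi$ and $\psi$ is the right place to be careful.
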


For  given admissible controls $u(\cdot)\in\mathcal {U}$ and
$v(\cdot)\in\mathcal {V}$, let us  consider the following doubly
controlled RBSDEs:  for $j=1,2,$
\begin{equation}\label{BSDE}
\left\{
\begin{array}{lll}
&^{j}Y^{t,x;u,v}_s = \Phi_{j}(X^{t,x;u,v}_T)
+\displaystyle \int_s^T f_{j}(r,X^{t,x;u,v}_r,\ ^{j}Y^{t,x;u,v}_r,\ ^{j}Z^{t,x;u,v}_r,u_r, v_r)dr\\
&\hskip2cm +\ ^{j}K^{t,x;u,v}_T -\ ^{j}K^{t,x;u,v}_s-\displaystyle
\int_s^T\
^{j}Z^{t,x;u,v}_rdB_r,\qquad s\in[t,T],\\
& ^{j}Y^{t,x;u,v}_s \geq h_{j}(s,X^{t,x;u,v}_s), \ s\in[t,T],\\
& ^{j}K^{t,x;u,v}_t=0,\quad \displaystyle \int_t^T
(^{j}Y^{t,x;u,v}_r -h_{j}(r,X^{t,x;u,v}_r))d\ ^{j}K^{t,x;u,v}_r=0,
\end{array}
\right.
\end{equation}
where $X^{t,x;u,v}$ is introduced in equation (\ref{eq1}) and
\begin{eqnarray*}
 \Phi_{j}=\Phi_{j}(x):\mathbb{R}^n\rightarrow\mathbb{R},
 \quad  \quad
h_{j}=h_{j}(t,x):[0,T]\times\mathbb{R}^n\rightarrow \mathbb{R},\\
f_{j}=f_{j}(t,x,y,z,u,v):[0,T]\times\mathbb{R}^n\times\mathbb{R}\times\mathbb{R}^d\times
U\times V\rightarrow \mathbb{R}.
\end{eqnarray*}
We make the following assumptions:
\begin{list}{}{\setlength{\itemindent}{0cm}}
\item[$(H3.3)$] There exists a positive constant $L$ such that, for all $t\in[0,T], x,x^{\prime}\in\mathbb{R}^n$,
$y,y^{\prime}\in\mathbb{R}$, $z,z^{\prime}\in\mathbb{R}^d$, $u\in U$
and $ v\in V$, $\Phi_{j}(x)\geq h_{j}(T,x)$ and
\begin{eqnarray*}
&&|f_{j}(t,x,y,z,u,v)-f_{j}(t,x^{\prime},y^{\prime},z^{\prime},u,v)|
+|\Phi_{j}(x)-\Phi_{j}(x^{\prime})|+|h_{j}(t,x)-h_{j}(t,x^{\prime})|\\
&&\leq L(|x-x^{\prime}|+|y-y^{\prime}|+|z-z^{\prime}|).
\end{eqnarray*}
\item[($H3.4$)] For all $(x,y,z)\in\mathbb{R}^{n}\times\mathbb{R}\times\mathbb{R}^{d}$,
 $f_{j}(\cdot,x,y,z,\cdot,\cdot)$ is  continuous in
 $(t,u,v)$, and there exist   positive constants $C$ and $\alpha\geq \frac{1}{2}$ such that, for all $t,s\in[0,T],
 x\in\mathbb{R}^n$,
 \begin{eqnarray*}
|h_{j}(t,x)-h_{j}(s,x)|\leq C|t-s|^{\alpha}.
\end{eqnarray*}
\end{list}

Under the assumption (H3.3),  from \cite{EKPPQ1997} we know that
equation (\ref{BSDE}) admits a unique solution.
 For given  control processes $u(\cdot)\in U$ and $v(\cdot)\in V$, let us introduce now the associated
 cost functional for
 player $j$, $j=1,2,$
\begin{equation*}\label{}
J_{j}(t,x;u,v):= \left.\ ^{j}Y^{t,x;u,v}_s\right|_{s=t},\qquad
(t,x)\in [0,T]\times\mathbb{R}^n.
\end{equation*}

\vskip1mm From Buckdahn and Li \cite{BL2007} we have  the following
estimates for solutions.
\begin{proposition}\label{p3}
Under the assumption (H3.1)-(H3.3), there exists a  positive
constant $C$ such that, for all $t\in [0,T]$, $u(\cdot)\in\mathcal
{U}$ and $v(\cdot)\in \mathcal {V}$, $x,x^{\prime}\in \mathbb{R}^n$,
\begin{eqnarray*}
&&|^{j}Y^{t,x;u,v}_t|\leq C(1+|x|), \quad \mathbb{P}-a.s.,\\
&&|^{j}Y^{t,x;u,v}_t-\ ^{j}Y^{t,x^{\prime};u,v}_{t}| \leq
C|x-x^{\prime}|,\quad \mathbb{P}-a.s.
\end{eqnarray*}
\end{proposition}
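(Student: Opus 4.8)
The plan is to obtain both bounds by feeding the forward-SDE estimates of Lemma \ref{l4} into the reflected-BSDE estimates of Lemmas \ref{l1} and \ref{l18}, which is essentially the route taken in Buckdahn and Li \cite{BL2007}. \emph{Linear growth.} Since ${}^{j}Y^{t,x;u,v}_t$ is $\mathcal{F}_t$-measurable one has $|{}^{j}Y^{t,x;u,v}_t|^2=\mathbb{E}\big[|{}^{j}Y^{t,x;u,v}_t|^2\,\big|\,\mathcal{F}_t\big]\le\mathbb{E}\big[\sup_{t\le s\le T}|{}^{j}Y^{t,x;u,v}_s|^2\,\big|\,\mathcal{F}_t\big]$, and I would apply Lemma \ref{l1} to (\ref{BSDE}), whose terminal value is $\Phi_j(X^{t,x;u,v}_T)$, whose generator at $(y,z)=(0,0)$ is $f_j(s,X^{t,x;u,v}_s,0,0,u_s,v_s)$, and whose obstacle is $h_j(s,X^{t,x;u,v}_s)$. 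By (H3.3) the maps $\Phi_j(\cdot)$, $h_j(t,\cdot)$ and $f_j(t,\cdot,0,0,u,v)$ are $L$-Lipschitz uniformly in the remaining variables; moreover $f_j(\cdot,0,0,0,\cdot,\cdot)$ is bounded on the compact set $[0,T]\times U\times V$ (being continuous there, by (H3.4)), $h_j(\cdot,0)$ is bounded on $[0,T]$ (again by (H3.4)), and $|\Phi_j(0)|<\infty$. Hence each of the three data terms on the right-hand side of Lemma \ref{l1} is pointwise $\le C(1+\sup_{t\le s\le T}|X^{t,x;u,v}_s|^2)$, and Lemma \ref{l4} with $p=2$ then gives $|{}^{j}Y^{t,x;u,v}_t|^2\le C(1+|x|^2)$, which is the first inequality.

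\emph{Lipschitz continuity.} The direct approach --- applying Lemma \ref{l18} to the two reflected BSDEs built over $X^{t,x;u,v}$ and $X^{t,x';u,v}$ --- is not quite enough: the obstacle increment $h_j(\cdot,X^{t,x;u,v}_\cdot)-h_j(\cdot,X^{t,x';u,v}_\cdot)$ enters only through the term $C\big(\mathbb{E}[\sup_s|\Delta S_s|^2|\mathcal{F}_t]\big)^{1/2}\Psi_{t,T}^{1/2}$, which by Lemma \ref{l4} is of order $|x-x'|(1+|x|+|x'|)$ and so yields only a H\"older bound of exponent $1/2$ with linear growth. To obtain genuine Lipschitz continuity I would instead use the representation of the reflected-BSDE solution as the value of an optimal stopping problem with nonlinear ($f_j$-)expectation cost (see \cite{EKPPQ1997,EPQ1997}): with $\Gamma^{x}_\tau:=h_j(\tau,X^{t,x;u,v}_\tau)\,1_{\{\tau<T\}}+\Phi_j(X^{t,x;u,v}_T)\,1_{\{\tau=T\}}$ and $\mathcal{E}^{x}_{t,\tau}$ the nonlinear expectation attached to the BSDE on $[t,\tau]$ with generator $r\mapsto f_j(r,X^{t,x;u,v}_r,\cdot,\cdot,u_r,v_r)$, one has
\[
{}^{j}Y^{t,x;u,v}_t=\esssup_{\tau\in\mathcal{T}_{t,T}}\mathcal{E}^{x}_{t,\tau}\big[\Gamma^{x}_\tau\big],
\]
where $\mathcal{T}_{t,T}$ denotes the set of $[t,T]$-valued stopping times. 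Since $\esssup$ is $1$-Lipschitz, $\big|{}^{j}Y^{t,x;u,v}_t-{}^{j}Y^{t,x';u,v}_t\big|\le\esssup_{\tau}\big|\mathcal{E}^{x}_{t,\tau}[\Gamma^{x}_\tau]-\mathcal{E}^{x'}_{t,\tau}[\Gamma^{x'}_\tau]\big|$, and the standard (conditional) stability estimate for BSDEs, together with the $L$-Lipschitz dependence of $f_j,\Phi_j,h_j$ on $x$, bounds the right-hand side uniformly in $\tau$ by $C\big(\mathbb{E}[\sup_{t\le s\le T}|X^{t,x;u,v}_s-X^{t,x';u,v}_s|^2\,|\,\mathcal{F}_t]\big)^{1/2}$; by Lemma \ref{l4} with $p=2$ this is $\le C|x-x'|$, which is the second inequality.

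\emph{The main obstacle.} Apart from the reflection term everything is routine. The delicate point is exactly the Lipschitz-in-$x$ estimate: because the obstacle itself depends on $x$, the generic stability estimate for reflected BSDEs loses a factor through the increasing component ${}^{j}K^{t,x;u,v}$, and one has to replace it by the optimal stopping representation above (equivalently, one could establish the estimate uniformly along a penalization scheme and then pass to the limit). The only further point worth recording is that the data of (\ref{BSDE}) have at most linear growth in $x$, which is precisely what (H3.3) together with the continuity and compactness in (H3.4) provides.
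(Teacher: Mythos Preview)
The paper does not actually prove this proposition: it simply records that the estimates are taken from Buckdahn and Li~\cite{BL2007}. Your sketch therefore goes further than the paper itself, and the route you outline---feed Lemma~\ref{l4} into Lemma~\ref{l1} for the growth bound, and replace the crude stability estimate of Lemma~\ref{l18} by the optimal-stopping (equivalently, penalization) representation for the Lipschitz bound---is precisely the argument carried out in the cited reference.

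Two small remarks. First, for the linear-growth part you invoke (H3.4) to get boundedness of $f_j(\cdot,0,0,0,\cdot,\cdot)$ and of $h_j(\cdot,0)$ on $[0,T]\times U\times V$; this is indeed needed, even though the proposition as stated in the paper lists only (H3.1)--(H3.3), since nothing in (H3.1)--(H3.3) alone controls the driver or the obstacle at the origin. Second, your diagnosis of the Lipschitz step is exactly right: applying Lemma~\ref{l18} directly yields only $|\Delta Y_t|^2\le C|x-x'|^2+C|x-x'|\,(1+|x|+|x'|)$ because of the $\Psi_{t,T}^{1/2}$ factor attached to the obstacle increment, and the standard cure---uniform in~$\tau$ BSDE stability applied inside the representation ${}^jY^{t,x;u,v}_t=\esssup_{\tau}\mathcal{E}^x_{t,\tau}[\Gamma^x_\tau]$, or a uniform bound along the penalized equations---is what \cite{BL2007,EKPPQ1997} do.
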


We now recall the definition of admissible controls and NAD
strategies, which was introduced in \cite{L2011}.
\begin{definition}
The space $\mathcal {U}_{t,T}$ (resp., $\mathcal {V}_{t,T}$) of
admissible controls for $1^{th}$ player (resp., $2^{nd}$) on the
interval $[t, T]$  is defined as the space of all processes
$\{u_{r}\}_{r\in[t,T]}$ (resp., $\{v_{r}\}_{r\in[t,T]}$), which are
$\mathbb{F}$-progressively measurable and take values in $U$ (resp.,
$V$).
\end{definition}

\begin{definition}\label{d1}
 A nonanticipative strategy with delay (NAD strategy)
for $1^{th}$ player is a measurable mapping $\alpha:\mathcal
{V}_{t,T}\rightarrow \mathcal {U}_{t,T}$, which satisfies the
following properties:

1)  $\alpha$ is a nonanticipative strategy, i.e., for every
$\mathbb{F}$-stopping time $\tau:\Omega\rightarrow [t,T],$ and for
 $v_{1},v_{2} \in\mathcal {V}_{t,T}$ with $v_{1}=v_{2}$   on $[[ t,\tau]]$,
it holds $\alpha(v_{1})=\alpha(v_{2})$ on $[[t,\tau]]$. (Recall that
$[[ t,\tau]]=\{(s,\omega)\in[t,T]\times \Omega, t\leq
s\leq\tau(\omega) \}$).

2) $\alpha$ is a   strategy with delay, i.e., for all $v\in\mathcal
{V}_{t,T}$,
 there exists an increasing sequence of stopping times
$\{S_{n}(v)\}_{n\geq 1}$ with

i) $t=S_{0}(v)\leq S_{1}(v) \leq\cdots \leq S_{n}(v)\leq \cdots \leq
T$,

ii) $\bigcup_{n\geq
1}\{S_{n}(v)=T\}=\Omega$, $\mathbb{P}$-a.s.,\\
such that, for all $n\geq 1 $ and $v,v' \in \mathcal {V}_{t,T},
\Gamma\in \mathcal {F}_{t}$, it holds:  if $v=v'$
 on $[[ t,S_{n-1}(v)]]\bigcap([t,T]\times\Gamma)$, then

iii) $S_{l}(v)=S_{l}(v')$, on $\Gamma,$ $1\leq l \leq n$,

iv) $\alpha(v)=\alpha(v')$, on $[[
t,S_{n}(v)]]\bigcap([t,T]\times\Gamma)$.\vskip2mm

We denote the set of all NAD strategies for $1^{th}$ player for
games over the time interval $[t,T]$  by $\mathcal {A}_{t,T}$. The
set of all NAD strategies $\beta:\mathcal {U}_{t,T}\rightarrow
\mathcal {V}_{t,T}$ for $2^{nd}$ player for games over the time
interval $[t,T]$ is defined in a symmetrical way and we denote it by
$\mathcal {B}_{t,T}$.
\end{definition}

NAD strategy allows us to put stochastic differential games under
normal form. The following lemma was established in \cite{L2011}.
\begin{lemma}\label{l2}
If  $(\alpha,\beta)\in \mathcal {A}_{t,T}\times \mathcal {B}_{t,T}$,
then there exists a unique couple of admissible control $(u,v)\in
\mathcal {U}_{t,T} \times \mathcal {V}_{t,T}$ such that
\begin{eqnarray*}\label{}
\alpha(v)=u, \quad \beta(u)=v.
\end{eqnarray*}
\end{lemma}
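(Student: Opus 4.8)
The plan is to reduce the game on NAD strategies to a fixed-point statement about admissible controls. Given $(\alpha,\beta)\in\mathcal{A}_{t,T}\times\mathcal{B}_{t,T}$, I want to produce a pair $(u,v)\in\mathcal{U}_{t,T}\times\mathcal{V}_{t,T}$ with $\alpha(v)=u$ and $\beta(u)=v$. The natural strategy is to build $(u,v)$ recursively on the partition generated by the delay stopping times of $\alpha$ and $\beta$, and then to argue uniqueness by a stopping-time exhaustion. First I would let $\{S_n\}$ denote the delay stopping times associated with $\alpha$ (as in part 2) of Definition \ref{d1}) and $\{R_m\}$ those associated with $\beta$, and consider the common refinement. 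On the first block, $[[t,S_1\wedge R_1]]$, the delay property says that $\alpha(v)$ on that block depends only on $v|_{[[t,t]]}$ — i.e. on nothing beyond the initial condition, which is $\mathcal{F}_t$-measurable — and symmetrically for $\beta$; hence $u$ and $v$ are already pinned down on the first block by $\alpha$ and $\beta$ alone.

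Next I would iterate. Suppose $(u,v)$ has been constructed and shown unique on $[[t,T_k]]$, where $T_k$ is the $k$-th point of the common refinement of $\{S_n\}$ and $\{R_m\}$. On $[[t,T_{k+1}]]$ the strategy $\alpha$ sees $v$ only up to the previous delay time $\le T_k$, where $v$ is already determined; so $\alpha(v)=u$ is forced on $[[t,T_{k+1}]]$, and likewise $\beta(u)=v$. The nonanticipativity and the localization properties iii)–iv) of Definition \ref{d1} (which allow restriction to a set $\Gamma\in\mathcal{F}_t$) guarantee that this piecing-together is consistent and measurable, and that the stopping times of the refinement do not themselves depend on the tail of the controls in a way that breaks the recursion. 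Since $\bigcup_n\{S_n=T\}=\Omega$ and $\bigcup_m\{R_m=T\}=\Omega$ up to a null set, the common refinement exhausts $[t,T]$ almost surely, so the recursion defines $(u,v)$ on all of $[[t,T]]$, and the construction is forced at every stage, giving uniqueness. This is essentially the argument of Lin \cite{L2011} (and of Buckdahn--Li for the strategy-against-control case), so I would invoke Lemma \ref{l2} as already established; the reflection in the cost functionals plays no role here, since Lemma \ref{l2} concerns only the controls and strategies.

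The main obstacle is the bookkeeping of the two interleaved sequences of delay stopping times: one must check that the common refinement is again an increasing sequence of stopping times exhausting $\Omega$, and that at each step the value of $\alpha(v)$ on the new block genuinely depends only on data already fixed on the old block (so there is no circularity between "$u$ determines $v$" and "$v$ determines $u$"). The delay property, which inserts a strictly positive lag $S_{n-1}(v)<S_n(v)$ before the strategy can react, is exactly what breaks the circularity; making this precise, including the measurable selection of $(u,v)$ on each $\mathcal{F}_t$-measurable piece $\Gamma$, is the technical heart of the proof. Since the statement is quoted from \cite{L2011}, I would present only this sketch and refer the reader there for the full details.
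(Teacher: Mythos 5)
Your sketch is correct and follows essentially the same route as the source the paper relies on: the paper gives no proof of Lemma \ref{l2} and simply cites Lin \cite{L2011}, where the fixed point $(u,v)$ is obtained by exactly this recursion along the interleaved delay stopping times of $\alpha$ and $\beta$, the positive delay $S_{n-1}(v)<S_n(v)$ breaking the circularity and the exhaustion $\bigcup_n\{S_n=T\}=\Omega$ closing the induction. Your observation that the reflection plays no role here is also accurate, since the lemma concerns only the strategy--control normal form.
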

If $(\alpha,\beta)\in\mathcal {A}_{t,T}\times \mathcal {B}_{t,T}$,
then from Lemma \ref{l2} we  have a unique couple $(u,v)\in\mathcal
{U}_{t,T}\times\mathcal {V}_{t,T}$ such that
$(\alpha(v),\beta(u))=(u,v)$. Then let us put
$J_{j}(t,x;\alpha,\beta)=J_{j}(t,x;u,v)$. Therefore, let us define:
for all $(t,x)\in [0,T]\times\mathbb{R}^n$,
\begin{eqnarray*}\label{}
W_{j}(t,x):=\esssup_{\alpha\in\mathcal {A}_{t,T}}
\essinf_{\beta\in\mathcal {B}_{t,T}} J_{j}(t,x;\alpha,\beta),
\end{eqnarray*}
and
\begin{eqnarray*}\label{}
U_{j}(t,x):=\essinf_{\beta\in\mathcal {B}_{t,T}}
\esssup_{\alpha\in\mathcal {A}_{t,T}} J_{j}(t,x;\alpha,\beta).
\end{eqnarray*}
 Under the assumptions (H3.1)--(H3.3) we see that  $W_{j}(t,x)$ and $
U_{j}(t,x)$ are random variables. But using the arguments in
\cite{BCQ2011} and \cite{L2011a}, we have the following proposition.
\begin{proposition}
Under the assumptions (H3.1)--(H3.3), for all
$(t,x)\in[0,T]\times\mathbb{R}^n$, the value functions $W_{j}(t,x)$
and $ U_{j}(t,x)$ are deterministic functions.
\end{proposition}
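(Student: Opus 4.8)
The plan is to show that for each fixed $(t,x)$ the random variables $W_j(t,x)$ and $U_j(t,x)$ coincide $\mathbb{P}$-a.s.\ with a constant, by exploiting the fact that the driving noise on $[0,t]$ is independent of the noise on $[t,T]$ and that everything defining $W_j(t,x)$ — the forward SDE \eqref{eq1}, the reflected BSDE \eqref{BSDE}, the admissible control sets $\mathcal{U}_{t,T},\mathcal{V}_{t,T}$, and the NAD strategy sets $\mathcal{A}_{t,T},\mathcal{B}_{t,T}$ — is built only from the increments of $B$ after time $t$. Concretely, I would fix an arbitrary $\mathcal{F}_t$-measurable event, or more efficiently work with the shift/rotation on $\Omega$ that is generated by a deterministic transformation of the Brownian path on $[0,t]$ while leaving the increments after $t$ untouched; since $W_j(t,x)$ is a functional of those post-$t$ increments only, it is invariant under such transformations, and a random variable measurable with respect to the post-$t$ $\sigma$-field that is also invariant under all such path transformations before $t$ must in fact be independent of $\mathcal{F}_t$ and, being $\mathcal{F}_t$-measurable after passing through the essinf/esssup, must be a.s.\ constant.

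In more detail, the key steps are: (1) Recall from Lemma \ref{l4} and Proposition \ref{p3} that $X^{t,x;u,v}$ and the cost functional $J_j(t,x;u,v)=\,^{j}Y^{t,x;u,v}_t$ depend on $(u,v)$ only through their restriction to $[t,T]$ and are constructed pathwise from $\{B_s-B_t: s\in[t,T]\}$; hence for a deterministic, measure-preserving map $\pi$ of $C([0,t];\mathbb{R}^d)$ acting on $\Omega$ only through the path on $[0,t]$, one has $J_j(t,x;u,v)\circ\pi = J_j(t,x;u\circ\pi, v\circ\pi)$, and the maps $u\mapsto u\circ\pi$, $\alpha\mapsto \pi^{-1}\alpha(\,\cdot\circ\pi)\pi$ are bijections of $\mathcal{U}_{t,T}$ and $\mathcal{A}_{t,T}$ respectively (similarly for $\mathcal{V}_{t,T},\mathcal{B}_{t,T}$). (2) Conclude that $W_j(t,x)\circ\pi = W_j(t,x)$ $\mathbb{P}$-a.s.\ for every such $\pi$, using that esssup and essinf commute with these bijective relabelings. (3) A standard zero–one type argument: a random variable invariant under the group of all such before-$t$ path rotations is independent of $\sigma\{B_s:s\le t\}$, hence independent of $\mathcal{F}_t$ up to null sets; combined with $W_j(t,x)$ being (a fortiori, after conditioning) determined by the whole filtration, independence of $\mathcal{F}_t$ together with being a limit of $\mathcal{F}_t$-measurable quantities forces $W_j(t,x)$ to be deterministic. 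Alternatively, and perhaps more cleanly, one invokes the dynamic programming principle / backward-in-time approximation exactly as in \cite{BCQ2011} and \cite{L2011a}: approximate the game on $[t,T]$ by step strategies on a partition $t=t_0<t_1<\dots<t_N=T$, show by backward induction on the partition that the corresponding value is $\mathcal{F}_{t_{k}}$-measurable and, at the initial step, $\mathcal{F}_t$-measurable and invariant under before-$t$ rotations hence constant, then pass to the limit.

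The main obstacle is step (1)–(2): verifying that the essinf over NAD strategies (which are measurable maps between infinite-dimensional control spaces, subject to the delay and nonanticipativity conditions of Definition \ref{d1}) genuinely transforms covariantly under the path transformation $\pi$, i.e.\ that $\beta\mapsto \pi^{-1}\beta(\pi\,\cdot\,\pi^{-1})\pi$ maps $\mathcal{B}_{t,T}$ bijectively onto itself preserving the stopping-time structure $\{S_n\}$; the obstacle and delay constraints must be checked to be stable under $\pi$, and one must be careful that the essinf (an $\mathcal{F}_t$-measurable object a priori) is not altered. The reflection adds only technical weight here — the extra component $\,^{j}K^{t,x;u,v}$ and the obstacle $h_j$ are handled by the same pathwise-uniqueness and comparison facts (Lemmas \ref{l1}, \ref{l18}, \ref{l8}) already recorded — but no new conceptual difficulty. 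Once the covariance of $J_j$, and hence of the upper and lower values, under before-$t$ transformations is established, the passage to "$W_j$ and $U_j$ are deterministic" is routine, mirroring line by line the proofs in \cite{BCQ2011} and \cite{L2011a}.
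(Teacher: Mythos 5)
Your outline is essentially the argument the paper intends --- the paper itself gives no proof, deferring to \cite{BCQ2011} and \cite{L2011a} --- but with one substantive deviation worth flagging. In those references (and in \cite{BL2008}, where the technique originates) the transformations of the pre-$t$ path are \emph{not} measure-preserving rotations but Cameron--Martin translations $\tau_h\omega:=\omega+\int_0^{\cdot\wedge t}\dot h_s\,ds$, under which Wiener measure is only quasi-invariant; the passage from the invariance $W_j(t,x)\circ\tau_h=W_j(t,x)$ to the conclusion is then made via the explicit Girsanov density, showing $\mathbb{E}\big[W_j(t,x)\exp\big(i\int_0^t\dot h_s\,dB_s\big)\big]=\mathbb{E}[W_j(t,x)]\,\mathbb{E}\big[\exp\big(i\int_0^t\dot h_s\,dB_s\big)\big]$ for all such $h$, whence the ($\mathcal F_t$-measurable) value is independent of $\mathcal F_t$ and a.s.\ constant. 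Your measure-preserving variant can be made to work, but your closing ``zero--one type argument'' is then a genuinely different lemma (ergodicity of the orthogonal group acting on the Gaussian coordinates of $B|_{[0,t]}$, or a Hewitt--Savage-type statement) that you would have to prove; the translation/Girsanov route avoids this and needs only elementary exponential-martingale computations, which is why the literature uses it. Two points you assert but should verify: (i) esssup/essinf commute with composition by the transformation only because its image law is equivalent to $\mathbb P$, so null sets are preserved in both directions; (ii) the covariance $J_j(t,x;u,v)\circ\tau_h=J_j(t,x;u\circ\tau_h,v\circ\tau_h)$ for the reflected cost functional rests on uniqueness for (\ref{BSDE}) (the transformed triple $(Y,Z,K)\circ\tau_h$ solves the equation driven by the transformed data, hence equals the solution for the transformed controls), and the conjugated strategy must be checked to remain in $\mathcal B_{t,T}$ with its delay structure $\{S_n\}$ from Definition \ref{d1} intact --- you correctly identify this as the main technical burden, and it is exactly what is carried out in \cite{L2011a}. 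With those points filled in, your sketch is a correct proof.
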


Let us  now recall the definition  of stochastic backward
semigroups, which was first introduced by Peng \cite{P1997}  to
study stochastic optimal control problem. For a given initial
condition $(t,x)\in [0,T]\times\mathbb{R}^n$,  $0\leq\delta\leq
T-t$, for admissible control processes $u(\cdot)\in\mathcal
{U}_{t,t+\delta}$ and $v(\cdot)\in\mathcal {V}_{t,t+\delta}$, and a
real-valued random variable $\eta\in L^2(\Omega,\mathcal
{F}_{t+\delta},\mathbb{P};\mathbb{R})$ such that $ \eta\geq
h_{j}(t+\delta,X^{t,x;u,v}_{t+\delta})$, we define
\[
^{j}G^{t,x;u,v}_{t,t+\delta}[\eta]:=\ ^{j}\overline{Y}_t^{t,x;u,v},
\]
where $(^{j}\overline{Y}^{t,x;u,v},\ ^{j}\overline{Z}^{t,x;u,v},\
^{j}\overline{K}^{t,x;u,v})$ is the unique solution of the following
reflected BSDE over the time interval $[t,t+\delta]$:
\begin{equation*}
\left\{
\begin{array}{lll}
&^{j}\overline{Y}^{t,x;u,v}_s = \eta +\displaystyle
\int_s^{t+\delta}
f_{j}(r,X^{t,x;u,v}_r,\ ^{j}\overline{Y}^{t,x;u,v}_r,\ ^{j}\overline{Z}^{t,x;u,v}_r,u_r, v_r)dr\\
&\hskip2cm +\ ^{j}\overline{K}^{t,x;u,v}_{t+\delta} -\
^{j}\overline{K}^{t,x;u,v}_s-\displaystyle \int_s^{t+\delta}\
^{j}\overline{Z}^{t,x;u,v}_rdB_r,\quad s\in[t,t+\delta],\\
& ^{j}\overline{Y}^{t,x;u,v}_s \geq h_{j}(s,X^{t,x;u,v}_s), \qquad
s\in[t,t+\delta],
\\
& ^{j}\overline{K}^{t,x;u,v}_t=0,\quad
\displaystyle\int_t^{t+\delta} (^{j}\overline{Y}^{t,x;u,v}_r
-h_{j}(r,X^{t,x;u,v}_r))d^{j}\overline{K}^{t,x;u,v}_r=0,
\end{array}
\right.
\end{equation*}
and $X^{t,x;u,v}$ is the unique solution of equation (\ref{eq1}).

For $(t,x)\in[0,T]\times\mathbb{R}^{n},$ $(u,v)\in \mathcal
{U}_{t,T}\times \mathcal {V}_{t,T}, 0\leq \delta \leq T-t, j=1,2,$
we have
\begin{eqnarray*}\label{}
J_{j}(t,x;u,v)&=&
\ ^{j}G^{t,x;u,v}_{t,T}[\Phi_{j}(X^{t,x;u,v}_T)]=\ ^{j}G^{t,x;u,v}_{t,t+\delta}[^{j}Y^{t,x;u,v}_{t+\delta}]\\
&=&^{j}G^{t,x;u,v}_{t,t+\delta}[J_{j}(t+\delta,X^{t,x;u,v}_{t+\delta},u,v)].
\end{eqnarray*}

\begin{remark}\label{}
We consider a special case of $f_{j}$. If  $f_{j}$ is independent of
$(y,z)$, then we have
\begin{eqnarray*}\label{}
J_{j}(t,x;u,v)= \ ^{j}G^{t,x;u,v}_{t,t+\delta}[\eta]
=\mathbb{E}\Big[\eta+\displaystyle\int_{t}^{t+\delta}f_{j}(r,X^{t,x;u,v}_r,u_r,
v_r)dr+ \ ^{j}\overline{K}^{t,x;u,v}_{t+\delta}\ \Big |\ \mathcal
{F}_{t}\Big].
\end{eqnarray*}

\end{remark}

\begin{proposition}\label{p1}
Under the assumptions (H3.1)--(H3.3)  we have the following dynamic
programming principle: for all $0<\delta\leq T-t,
x\in\mathbb{R}^{n}$,
\begin{equation*}
W_{j}(t,x)=\esssup_{\alpha\in\mathcal
{A}_{t,t+\delta}}\essinf_{\beta\in\mathcal {B}_{t,t+\delta}} \
^{j}G^{t,x;\alpha,\beta}_{t,t+\delta}[W_{j}(t+\delta,X^{t,x;\alpha,\beta}_{t+\delta})],
\end{equation*}
and
\begin{equation*}
U_{j}(t,x)=\essinf_{\beta\in\mathcal
{B}_{t,t+\delta}}\esssup_{\alpha\in\mathcal {A}_{t,t+\delta}} \
^{j}G^{t,x;\alpha,\beta}_{t,t+\delta}[U_{j}(t+\delta,X^{t,x;\alpha,\beta}_{t+\delta})].
\end{equation*}
\end{proposition}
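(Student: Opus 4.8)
The plan is to prove the identity for $W_j$; the one for $U_j$ follows by interchanging the roles of the two players (and of $\esssup$ and $\essinf$). Write the right-hand side as
\[
\widetilde{W}_j(t,x):=\esssup_{\alpha\in\mathcal{A}_{t,t+\delta}}\essinf_{\beta\in\mathcal{B}_{t,t+\delta}}\ {}^{j}G^{t,x;\alpha,\beta}_{t,t+\delta}[W_j(t+\delta,X^{t,x;\alpha,\beta}_{t+\delta})],
\]
which, by the same reasoning that makes $W_j$ deterministic (as in \cite{BCQ2011,L2011a}), is a deterministic function. The proof uses four facts already available: (a) the backward semigroup property $J_j(t,x;u,v)={}^{j}G^{t,x;u,v}_{t,t+\delta}[J_j(t+\delta,X^{t,x;u,v}_{t+\delta},u,v)]$ together with the flow property $J_j(t+\delta,X^{t,x;u,v}_{t+\delta},u,v)={}^{j}Y^{t,x;u,v}_{t+\delta}$; (b) the monotonicity and Lipschitz dependence on the terminal datum of $\eta\mapsto{}^{j}G^{t,x;u,v}_{t,t+\delta}[\eta]$, consequences of the comparison principle Lemma \ref{l8} and the stability estimate Lemma \ref{l18}; (c) the $x$-Lipschitz continuity of $J_j$ and hence of $W_j(t+\delta,\cdot)$ (Proposition \ref{p3}), and the moment bounds of Lemma \ref{l4}; and (d) the identification of an NAD strategy pair with its fixed-point control, Lemma \ref{l2}. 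We prove the two inequalities separately.

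\textbf{Step 1: $W_j(t,x)\ge\widetilde{W}_j(t,x)$.} Fix $\alpha_1\in\mathcal{A}_{t,t+\delta}$ and $\varepsilon>0$. Choose a countable Borel partition $\{O_i\}_i$ of $\mathbb{R}^n$ of mesh $\le\varepsilon$, points $x_i\in O_i$, and, since $W_j(t+\delta,\cdot)$ is deterministic, NAD strategies $\alpha^2_i\in\mathcal{A}_{t+\delta,T}$ with $\essinf_{\beta_2\in\mathcal{B}_{t+\delta,T}}J_j(t+\delta,x_i;\alpha^2_i,\beta_2)\ge W_j(t+\delta,x_i)-\varepsilon$. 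Paste these into $\alpha\in\mathcal{A}_{t,T}$ that coincides with $\alpha_1$ on $[[t,t+\delta]]$ and, on $[t+\delta,T]$ over $\{X^{t,x;\alpha,\beta}_{t+\delta}\in O_i\}$, with a time-shifted copy of $\alpha^2_i$; the event is $\mathcal{F}_{t+\delta}$-measurable and depends only on the controls already determined on $[t,t+\delta]$, and one checks, as in \cite{L2011a}, that $\alpha$ is nonanticipative with delay. Given any $\beta\in\mathcal{B}_{t,T}$ with restriction $\beta_1\in\mathcal{B}_{t,t+\delta}$, Lemma \ref{l2} and the delay property show that the fixed-point pair $(u,v)$ of $(\alpha,\beta)$, restricted to $[t,t+\delta]$, is the fixed-point pair of $(\alpha_1,\beta_1)$, so that ${}^{j}G^{t,x;\alpha,\beta}_{t,t+\delta}={}^{j}G^{t,x;\alpha_1,\beta_1}_{t,t+\delta}$. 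By (a), the $\varepsilon$-optimality of $\alpha^2_i$, and Proposition \ref{p3} (to pass from $x_i$ back to $X^{t,x;\alpha,\beta}_{t+\delta}$ on $\{X_{t+\delta}\in O_i\}$), we get $J_j(t+\delta,X^{t,x;\alpha,\beta}_{t+\delta},u,v)\ge W_j(t+\delta,X^{t,x;\alpha,\beta}_{t+\delta})-C\varepsilon$ a.s.; applying (b),
\[
J_j(t,x;\alpha,\beta)={}^{j}G^{t,x;\alpha_1,\beta_1}_{t,t+\delta}[J_j(t+\delta,X^{t,x;\alpha,\beta}_{t+\delta},u,v)]\ge{}^{j}G^{t,x;\alpha_1,\beta_1}_{t,t+\delta}[W_j(t+\delta,X^{t,x;\alpha_1,\beta_1}_{t+\delta})]-C\varepsilon.
\]
Since $\beta\mapsto\beta_1$ maps $\mathcal{B}_{t,T}$ onto $\mathcal{B}_{t,t+\delta}$, taking $\essinf$ over $\beta$, then $\esssup$ over $\alpha_1$, then $\varepsilon\downarrow0$ gives $W_j(t,x)\ge\widetilde{W}_j(t,x)$.

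\textbf{Step 2: $W_j(t,x)\le\widetilde{W}_j(t,x)$.} Fix $\alpha\in\mathcal{A}_{t,T}$ with restriction $\alpha_1\in\mathcal{A}_{t,t+\delta}$ and $\varepsilon>0$, and pick $\beta_1\in\mathcal{B}_{t,t+\delta}$ with ${}^{j}G^{t,x;\alpha_1,\beta_1}_{t,t+\delta}[W_j(t+\delta,X^{t,x;\alpha_1,\beta_1}_{t+\delta})]\le\widetilde{W}_j(t,x)+\varepsilon$. The fixed-point pair of $(\alpha_1,\beta_1)$ determines the continuation $\widehat\alpha$ of $\alpha$ on $[t+\delta,T]$, and, $W_j(t+\delta,\cdot)$ being an $\esssup\essinf$ and deterministic, there are $\beta^2_i\in\mathcal{B}_{t+\delta,T}$ with $J_j(t+\delta,x_i;\widehat\alpha,\beta^2_i)\le W_j(t+\delta,x_i)+\varepsilon$; paste $\beta_1$ and the $\beta^2_i$ (over $\{X_{t+\delta}\in O_i\}$) into $\beta\in\mathcal{B}_{t,T}$, which is NAD. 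Then by (a) and Proposition \ref{p3}, $J_j(t+\delta,X^{t,x;\alpha,\beta}_{t+\delta},u,v)\le W_j(t+\delta,X^{t,x;\alpha,\beta}_{t+\delta})+C\varepsilon$ a.s., whence by (b),
\[
\essinf_{\beta'\in\mathcal{B}_{t,T}}J_j(t,x;\alpha,\beta')\le J_j(t,x;\alpha,\beta)\le{}^{j}G^{t,x;\alpha_1,\beta_1}_{t,t+\delta}[W_j(t+\delta,X^{t,x;\alpha_1,\beta_1}_{t+\delta})]+C\varepsilon\le\widetilde{W}_j(t,x)+C\varepsilon.
\]
Taking $\esssup$ over $\alpha$ and $\varepsilon\downarrow0$ gives $W_j(t,x)\le\widetilde{W}_j(t,x)$; with Step 1 this proves the $W_j$-identity, and the $U_j$-identity follows by the symmetric argument.

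\textbf{Main obstacle.} The semigroup and comparison manipulations are routine; the genuinely delicate points are (i) the pasting of the continuation strategies $\alpha^2_i$ (resp. $\beta^2_i$) into a single NAD strategy on $[t,T]$ — verifying nonanticipativity, the delay property, and, via Lemma \ref{l2}, the correct splitting across $t+\delta$ of the fixed-point control, which here is itself a random process, so that a measurable-selection layer is needed as in \cite{BCQ2011,L2011a}; and (ii) the bookkeeping of the $\varepsilon$-errors, which runs through Proposition \ref{p3} and the reflected-BSDE stability estimate Lemma \ref{l18} rather than their unreflected analogues: the obstacles $h_j$ produce the extra term $(\mathbb{E}[\sup_{t\le s\le T}|\Delta S_s|^2\mid\mathcal{F}_t])^{1/2}\Psi_{t,T}^{1/2}$ in Lemma \ref{l18}, which must be absorbed using the $x$-Lipschitz continuity of $h_j$ from $(H3.3)$ and the moment bounds of Lemma \ref{l4}. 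This is the "supplementary complexity" of the obstacle constraint flagged in the introduction.
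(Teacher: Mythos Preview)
The paper does not actually prove this proposition; it only states that the proof is similar to \cite{BCQ2011} and \cite{L2011a} and omits it. Your argument follows precisely the standard route of those references --- split at $t+\delta$, paste $\varepsilon$-optimal continuation strategies over a Borel partition of the state space, use the backward semigroup identity together with the comparison/stability Lemmas \ref{l8} and \ref{l18}, and invoke Lemma \ref{l2} to reconcile the fixed-point controls across the splitting time --- and you have correctly singled out the two genuinely nontrivial points (the NAD pasting with the attendant measurable-selection issue, and the extra obstacle term in the reflected stability estimate). So your proposal is in line with the approach the paper defers to, and indeed goes further than the paper itself by sketching the mechanism rather than merely citing it.

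One minor caution in Step 2: when you pick a single $\beta_1$ with ${}^{j}G^{t,x;\alpha_1,\beta_1}_{t,t+\delta}[\cdots]\le\widetilde{W}_j(t,x)+\varepsilon$ a.s., you are implicitly using the lattice property of the family $\{{}^{j}G^{t,x;\alpha_1,\beta}_{t,t+\delta}[\cdots]:\beta\in\mathcal{B}_{t,t+\delta}\}$ under essential infimum (so that an $\varepsilon$-minimizer exists pointwise a.s.); this is established in \cite{BCQ2011,L2011a} and is worth citing explicitly, but it is not a gap in your argument.
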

The proof of the above proposition is similar to \cite{BCQ2011} and
\cite{L2011a}, we omit the proof here.

\begin{proposition} \label{pro}
Under the assumptions (H3.1)--(H3.4),  there exists a positive
constant $C$ such that, for all $t,t'\in [0,T]$ and $x,
x^{\prime}\in\mathbb{R}^n$, we have
\begin{enumerate}[(i)]
\item $W_{j}(t,x)$ is $\frac{1}{2}$-H\"{o}lder continuous in $t$:
$$|W_{j}(t,x)-W_{j}(t',x)|\leq C(1+|x|)|t-t^{\prime}|^{\frac{1}{2}};$$
\item $|W_{j}(t,x)-W_{j}(t,x^{\prime})|\leq C|x-x^{\prime}|.$
\end{enumerate}
The same properties hold true for the function $U_{j}$.
\end{proposition}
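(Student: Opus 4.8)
The plan is to derive both estimates from the dynamic programming principle (Proposition \ref{p1}), the stability estimate for reflected BSDEs (Lemma \ref{l18}), the a priori bounds of Lemma \ref{l1}, the estimates for the forward SDE (Lemma \ref{l4}), and the already-established Lipschitz continuity in $x$ of the cost functionals (Proposition \ref{p3}). I will treat $W_j$; the argument for $U_j$ is identical after exchanging the roles of $\esssup$ and $\essinf$.

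First I would prove (ii). Fix $t$ and $x,x'$. For any NAD strategy pair $(\alpha,\beta)$, Lemma \ref{l2} produces the associated control pair $(u,v)$, and then $|J_j(t,x;\alpha,\beta)-J_j(t,x';\alpha,\beta)| = |{}^{j}Y^{t,x;u,v}_t - {}^{j}Y^{t,x';u,v}_t| \le C|x-x'|$ by Proposition \ref{p3}, where $C$ does not depend on $(\alpha,\beta)$ or $\omega$. Since $\esssup_\alpha\essinf_\beta$ is a contraction for the sup-norm in this sense — i.e.\ $|\esssup_\alpha\essinf_\beta a(\alpha,\beta) - \esssup_\alpha\essinf_\beta b(\alpha,\beta)| \le \sup_{\alpha,\beta}|a(\alpha,\beta)-b(\alpha,\beta)|$ — the bound passes to $W_j$, giving $|W_j(t,x)-W_j(t,x')|\le C|x-x'|$.

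For (i), fix $x$ and $t<t'$, and set $\delta = t'-t$. Apply the dynamic programming principle to write $W_j(t,x) = \esssup_{\alpha}\essinf_{\beta}\ {}^{j}G^{t,x;\alpha,\beta}_{t,t'}[W_j(t',X^{t,x;\alpha,\beta}_{t'})]$. Using the contraction property of $\esssup_\alpha\essinf_\beta$ again, it suffices to bound, uniformly in $(\alpha,\beta)$ (equivalently in the associated $(u,v)$),
\[
\left| {}^{j}G^{t,x;u,v}_{t,t'}\big[W_j(t',X^{t,x;u,v}_{t'})\big] - W_j(t,x) \right|.
\]
I would estimate this by comparing the reflected BSDE on $[t,t']$ defining ${}^{j}G^{t,x;u,v}_{t,t'}[W_j(t',X^{t,x;u,v}_{t'})]$ with the trivial one having zero driver, terminal value $W_j(t',X^{t,x;u,v}_{t'})$, and obstacle $h_j(\cdot,X^{t,x;u,v}_\cdot)$; Lemma \ref{l18} controls the difference of the two $Y$-values at time $t$ by (a) the $L^2$-norm of the driver $f_j$ integrated over $[t,t']$, which is $O(\delta)$ after using the linear growth of $f_j$ together with Lemma \ref{l4} and Proposition \ref{pro}(ii) to bound $W_j(t',X_{t'})$, times the square root of the moment bound $\Psi_{t,t'}$. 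The remaining piece is then $|\mathbb{E}[W_j(t',X^{t,x;u,v}_{t'})\mid\mathcal{F}_t] + \text{(increasing part)} - W_j(t,x)|$, which I split as $|W_j(t',X_{t'})-W_j(t',x)| \le C(1+|X_{t'}|)|x-X_{t'}|^{?}$ — no: rather $\le C|X_{t'}-x|$ by part (ii) — whose conditional $L^2$-norm is $O(\delta^{1/2})$ by the standard SDE estimate $\mathbb{E}[|X^{t,x;u,v}_{t'}-x|^2\mid\mathcal{F}_t]\le C(1+|x|^2)\delta$, plus $|W_j(t',x)-W_j(t,x)|$ which must be absorbed, plus the contribution of the increasing process $K$ and of the obstacle term $\sup_{t\le s\le t'}|h_j(s,X_s)-h_j(t',X_{t'})|$, which is $O(\delta^{\alpha})=O(\delta^{1/2})$ by (H3.4) and again Lemma \ref{l4}.

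The main obstacle is handling the $\delta^{1/2}$ bound cleanly in the presence of the reflecting term: one cannot simply drop $K$, and the "constant driver" comparison must be set up so that Lemma \ref{l18}'s obstacle term $(\mathbb{E}[\sup|\Delta S|^2\mid\mathcal{F}_t])^{1/2}\Psi^{1/2}$ is genuinely $O(\delta^{1/2})$ — this forces the comparison BSDE to share the \emph{same} obstacle $h_j(\cdot,X^{t,x;u,v}_\cdot)$, so that $\Delta S=0$ and only the driver term survives, after which the increasing process is controlled via Lemma \ref{l1}. A secondary technical point is that (i) and (ii) are intertwined (the proof of (i) uses (ii) to estimate $W_j(t',X_{t'})-W_j(t',x)$), so (ii) should be established first, and the final $\delta^{1/2}$-Hölder estimate in $t$ should be stated with the factor $(1+|x|)$ coming from the linear-growth bound on $W_j$ and on the moments of $X^{t,x;u,v}$. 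For $U_j$ the identical chain of inequalities applies, since the dynamic programming principle and all the cited estimates hold in the same form.
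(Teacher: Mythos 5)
The paper itself omits the proof of this proposition (it only remarks that it follows from Proposition \ref{p1} by standard arguments), and your overall strategy --- part (ii) from Proposition \ref{p3} together with the monotonicity of $\esssup_{\alpha}\essinf_{\beta}$, part (i) from the dynamic programming principle over $[t,t']$ combined with the RBSDE stability estimates and Lemma \ref{l4} --- is exactly the intended route. Your treatment of (ii), and of the terminal-value and driver terms in (i), is correct.

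There is, however, a genuine gap in (i): the reflection term is not controlled by the lemmas you cite. After reducing to the estimate $\bigl|{}^{j}G^{t,x;u,v}_{t,t'}[W_j(t',X^{t,x;u,v}_{t'})]-W_j(t',x)\bigr|\le C(1+|x|)\,\delta^{1/2}$, the lower bound is easy (drop $K\ge 0$), but the upper bound requires showing that the increasing process contributes only $O(\delta^{1/2})$, and Lemma \ref{l1} cannot deliver this: it bounds $\mathbb{E}[|K_{t'}-K_t|^2\mid\mathcal{F}_t]$ by $C\,\mathbb{E}\bigl[\xi^2+\bigl(\int_t^{t'}|g(s,0,0)|ds\bigr)^2+\sup_s S_s^2\mid\mathcal{F}_t\bigr]\le C(1+|x|^2)$, a quantity that does not shrink as $\delta\to 0$. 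Likewise Lemma \ref{l18} with $\Delta S=0$ only compares two reflected solutions; it never lets you discard the common obstacle. The missing ingredient is the structural inequality $h_j(s,y)\le W_j(s,y)$ (immediate from ${}^{j}Y^{s,y;u,v}_s\ge h_j(s,X^{s,y;u,v}_s)=h_j(s,y)$), which, via (H3.3)--(H3.4), guarantees that on $[t,t']$ the obstacle $h_j(s,X_s)$ stays below $W_j(t',x)+C|X_s-x|+C\delta^{\alpha}$. With this in hand one either invokes the optimal-stopping (Snell envelope) representation of the reflected BSDE from \cite{EKPPQ1997}, or constructs an explicit supersolution: the plain BSDE on $[t,t']$ with driver $|f_j|$ and terminal value $W_j(t',x)+C\sup_{t\le s\le t'}|X_s-x|+C\delta^{\alpha}$ remains above the obstacle, hence solves the reflected equation with $K\equiv 0$, and the comparison Lemma \ref{l8} then yields ${}^{j}G^{t,x;u,v}_{t,t'}[W_j(t',X_{t'})]\le W_j(t',x)+C(1+|x|)\delta^{1/2}$. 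Without one of these devices the $K$-term in your decomposition is left unestimated, and this is precisely where the additional difficulty created by the obstacle enters.
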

By means of the standard arguments and Proposition \ref{p1} we can
easily get the above proposition. The proof of the above proposition
is omitted here.

\section{Probabilistic interpretation of  systems of Isaacs equations with obstacle }

The objective of  this section is to  give a probabilistic
interpretation of systems of Isaacs equations with obstacle, and
show that $W_{j}$  and  $U_{j}$  introduced in Section 3, are the
viscosity solutions of the following  Isaacs equations with
obstacle, for $(t,x)\in [0,T)\times {\mathbb{R}}^n$,
\begin{eqnarray}\label{e25}
\left\{
\begin{array}{rcl}
\min\Big\{W_{j}(t,x)-h_{j}(t,x),\  -\dfrac{\partial }{\partial t}
W_{j}(t,x) - H_{j}^{-}(t, x, W_{j}(t,x),DW_{j}(t,x),
D^2W_{j}(t,x))\Big\}=0,\\
 W_{j}(T,x)=\Phi_{j}(x),
 \end{array}
\right.
\end{eqnarray}
and
\begin{eqnarray}\label{e26}
\left\{
\begin{array}{rcl}
\min\Big\{U_{j}(t,x)-h_{j}(t,x), \ -\dfrac{\partial }{\partial t}
U_{j}(t,x) - H_{j}^{+}(t, x, U_{j}(t,x),DU_{j}(t,x),
D^2U_{j}(t,x))\Big\}=0,\\
 U_{j}(T,x)=\Phi_{j}(x),
 \end{array}
\right.
\end{eqnarray}
respectively,  where
\begin{eqnarray*}
H_{j}(t, x, y, p, A, u,v)= \dfrac{1}{2}tr(\sigma\sigma^{T}(t, x, u,
v) A)+ p^{T}b(t, x, u, v)+ f_{j}(t, x, y,  p^{T}\sigma(t, x, u, v),
u, v),
\end{eqnarray*}
$ (t, x, y,  p,  u,v)\in [0, T]\times{\mathbb{R}}^n\times
\mathbb{R}\times \mathbb{R}^{n}\times U\times V$ and $ A\in
\mathbb{S}^{n}$ ($\mathbb{S}^{n}$ denotes all the $n\times n$
symmetric matrices),
\begin{eqnarray*}
H_{j}^-(t, x, y,  p, A)= \sup_{u \in U}\inf_{v \in V}H_{j}(t, x, y,
p, A, u,v),
\end{eqnarray*}
and
\begin{eqnarray*}
H_{j}^+(t, x, y,  p, A)= \inf_{v \in V}\sup_{u \in U}H_{j}(t, x, y,
p, A, u,v).
\end{eqnarray*}

We denote by $C^3_{l, b}([0,T]\times {\mathbb{R}}^n)$ the set of
real-valued functions which are continuously differentiable up to
the third order and whose derivatives of order from 1 to 3 are
bounded. Let us recall the definition of a viscosity solution  of
(\ref{e25}). The definition of  a viscosity solution of (\ref{e26})
can be defined in a similar way.
 \begin{definition}\label{d3}
For fixed $j=1,2,$  let  $w_{j}\in C([0,T]\times \mathbb{R}^n;\mathbb{R})$ be a function.  It is called \\
  {\rm(i)} a viscosity subsolution of  (\ref{e25}) if
\begin{eqnarray*}
w_{j}(T,x) \leq \Phi_{j} (x),\ \mbox{for all}  \ x \in
{\mathbb{R}}^n,
\end{eqnarray*}
and if for all functions $\varphi \in C^3_{l, b}([0,T]\times
  {\mathbb{R}}^n), $ and $(t,x) \in [0,T) \times {\mathbb{R}}^n$ such that $w_{j}-\varphi $\ attains
  a local maximum at $(t, x)$,
\begin{eqnarray*}
\min\Big\{w_{j}(t,x)-h_{j}(t,x),\  -\dfrac{\partial }{\partial t}
\varphi(t,x) - H_{j}^{-}(t, x, w_{j}(t,x),D\varphi(t,x),
D^2\varphi(t,x))\Big\}\leq 0,
\end{eqnarray*}

\noindent{\rm(ii)} a viscosity supersolution of  (\ref{e25}) if
\begin{eqnarray*}
w_{j}(T,x) \geq \Phi_{j} (x),\ \mbox{for all}\   x \in
{\mathbb{R}}^n,
\end{eqnarray*}
and if for all functions $\varphi \in C^3_{l, b}([0,T]\times
  {\mathbb{R}}^n), $ and $(t,x) \in [0,T) \times {\mathbb{R}}^n$ such that $w_{j}-\varphi $\ attains
  a local minimum at $(t, x)$,
\begin{eqnarray*}
\min\Big\{w_{j}(t,x)-h_{j}(t,x),\  -\dfrac{\partial }{\partial t}
\varphi(t,x) - H_{j}^{-}(t, x, w_{j}(t,x),D\varphi(t,x),
D^2\varphi(t,x))\Big\}\geq 0,
\end{eqnarray*}
 {\rm(iii)} a viscosity solution of (\ref{e25}) if it is both a viscosity subsolution
  and a supersolution of   (\ref{e25}).
\end{definition}

We adapt the methods in Buckdahn and Li \cite{BL2007} and Buckdahn,
Cardaliaguet and Quincampoix \cite{BCQ2011} to our framework. We can
obtain the following theorem.

\begin{theorem}\label{t1}
Under the assumptions (H3.1)--(H3.3), the function
  $W_{j}$ (resp., $U_{j}$) is a viscosity solution of the system
(\ref{e25}) (resp., (\ref{e26})).
\end{theorem}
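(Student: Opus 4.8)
The plan is to prove that $W_j$ is a viscosity solution of \eqref{e25} (the argument for $U_j$ being symmetric), using the dynamic programming principle of Proposition \ref{p1} together with the backward-semigroup representation. Continuity of $W_j$ in $(t,x)$ is already established in Proposition \ref{pro}, and the terminal condition $W_j(T,x)=\Phi_j(x)$ follows directly from the definition of the cost functional and $J_j(T,x;\alpha,\beta)=\Phi_j(x)$. The obstacle inequality $W_j(t,x)\geq h_j(t,x)$ comes for free from the constraint $^{j}Y^{t,x;u,v}_s\geq h_j(s,X^{t,x;u,v}_s)$ built into the RBSDE \eqref{BSDE}, so $W_j-h_j\geq 0$ everywhere; this immediately reduces the $\min$ in the supersolution property to checking the Hamiltonian term, while for the subsolution property the obstacle term already being $\geq 0$ means one must verify that the Hamiltonian term is $\leq 0$ wherever the obstacle is not active.

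First I would fix a test function $\varphi\in C^3_{l,b}$ and a point $(t,x)$ at which $W_j-\varphi$ attains a local maximum (for the subsolution part) with, without loss of generality, $W_j(t,x)=\varphi(t,x)$. Using Proposition \ref{p1}, write
\begin{eqnarray*}
0=\esssup_{\alpha\in\mathcal{A}_{t,t+\delta}}\essinf_{\beta\in\mathcal{B}_{t,t+\delta}}\ ^{j}G^{t,x;\alpha,\beta}_{t,t+\delta}[W_j(t+\delta,X^{t,x;\alpha,\beta}_{t+\delta})]-\varphi(t,x),
\end{eqnarray*}
and, using the local maximum property to replace $W_j(t+\delta,\cdot)$ by $\varphi(t+\delta,\cdot)$ up to a sign, together with comparison (Lemma \ref{l8}) and the a priori estimates (Lemmas \ref{l1}, \ref{l18}, \ref{l4}), reduce to analyzing the backward semigroup applied to $\varphi(t+\delta,X^{t,x;\alpha,\beta}_{t+\delta})$. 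For $\delta$ small one argues that, because $W_j(t,x)=\varphi(t,x)>h_j(t,x)$ is the only case needing work (if $W_j(t,x)=h_j(t,x)$ the $\min$ is automatically $\leq 0$) and because $h_j$ and $\varphi$ are continuous, the reflecting term $^{j}\overline{K}$ vanishes on $[t,t+\delta]$ for $\delta$ small enough; hence the RBSDE behaves like an ordinary BSDE there. Then an Itô-Taylor expansion of $\varphi(s,X^{t,x;\alpha,\beta}_s)$ plus the standard BSDE estimates give
\begin{eqnarray*}
\frac{1}{\delta}\Big(\varphi(t,x)-\ ^{j}G^{t,x;\alpha,\beta}_{t,t+\delta}[\varphi(t+\delta,X^{t,x;\alpha,\beta}_{t+\delta})]\Big)\longrightarrow -\frac{\partial\varphi}{\partial t}(t,x)-H_j\big(t,x,\varphi(t,x),D\varphi(t,x),D^2\varphi(t,x),u,v\big)
\end{eqnarray*}
in the appropriate averaged sense; taking $\esssup_\alpha\essinf_\beta$, dividing by $\delta$, and letting $\delta\downarrow 0$ yields $-\partial_t\varphi(t,x)-H_j^-(t,x,\varphi(t,x),D\varphi,D^2\varphi)\leq 0$, which is the subsolution inequality. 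The supersolution inequality is obtained the same way at a local minimum, with the roles of the inequalities reversed, and there the obstacle term needs no special treatment since $W_j-h_j\geq0$ always holds.

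The main obstacle will be the careful handling of the reflecting process $^{j}\overline{K}$ in the short-time limit: one must show that for a test function touching $W_j$ from the appropriate side at a point where the obstacle is inactive, the contribution of $^{j}\overline{K}^{t,x;\alpha,\beta}_{t+\delta}$ to the semigroup is $o(\delta)$ uniformly over $\alpha,\beta$, so that it does not survive after dividing by $\delta$. This requires combining the continuity/Hölder estimates of Proposition \ref{pro} and assumption (H3.4) (the $\alpha\geq\frac12$ Hölder regularity of $h_j$ in time, which is exactly what matches the $\delta^{1/2}$ scale of the diffusion) with the flow estimate of Lemma \ref{l4} to conclude that $^{j}\overline{Y}^{t,x;\alpha,\beta}_s>h_j(s,X^{t,x;\alpha,\beta}_s)$ on $[t,t+\delta]$ with high probability, forcing $d\,^{j}\overline{K}\equiv 0$ there by the Skorokhod condition; the remaining small-probability event is controlled by the $L^2$-estimates of Lemma \ref{l1}. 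A secondary technical point is the passage from the essential sup/inf over NAD strategies to a pointwise Hamiltonian, which is handled exactly as in Buckdahn–Li \cite{BL2007} and Buckdahn–Cardaliaguet–Quincampoix \cite{BCQ2011} by an approximation/measurable-selection argument exploiting the continuity of $b,\sigma,f_j$ in $(t,u,v)$ from (H3.1) and (H3.4); I would simply invoke those references for that part rather than reproduce it.
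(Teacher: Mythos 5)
Your proposal is correct and follows essentially the same route the paper takes: the paper gives no details for this theorem, merely stating that the methods of Buckdahn--Li \cite{BL2007} and Buckdahn--Cardaliaguet--Quincampoix \cite{BCQ2011} are adapted, and your sketch is precisely that adaptation (dynamic programming principle plus backward stochastic semigroup, It\^o--Taylor expansion, and the key observation that the reflection term need only be shown to contribute $o(\delta)$ in the subsolution direction at points where the obstacle is strictly inactive, the supersolution direction being free since $K$ is increasing so the reflected semigroup dominates the unreflected one). The only caveat is that you correctly invoke (H3.4) for the time-regularity of $h_j$ and the continuity of $f_j$ in $(t,u,v)$, which the theorem's stated hypothesis list (H3.1)--(H3.3) omits but which is indeed needed for the argument.
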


Let us  now give a comparison  theorem for the viscosity solution of
(\ref{e25}) and (\ref{e26}).  We first introduce the following
space:
 \begin{eqnarray*}
 \Theta:&=&\Big\{ \varphi\in C([0, T]\times {\mathbb{R}}^n): \mbox{there exists a constant}\ \ A>0\ \mbox{such
 that}\\ && \qquad \lim_{|x|\rightarrow \infty}|\varphi(t, x)|\exp\{-A[\log((|x|^2+1)^{\frac{1}{2}})]^2\}=0,\
 \mbox{uniformly in}\ t\in [0, T]\Big\}.
\end{eqnarray*}

 \begin{theorem}\label{t3}
 Under the assumptions (H3.1)--(H3.3), if an upper semicontinuous
 function $u_{1}\in  \Theta$ is a viscosity subsolution   of  (\ref{e25})
 (resp., (\ref{e26})), and a lower semicontinuous function $u_{2}\in  \Theta$ is a
 viscosity supersolution   of  (\ref{e25}) (resp.,
 (\ref{e26})), then we have the following:
\begin{eqnarray*}
u_{1}(t,x)\leq u_{2}(t,x), \ \ \mbox{for all}\ (t,x)\in[0,T]\times
\mathbb{R}^{n}.
\end{eqnarray*}
\end{theorem}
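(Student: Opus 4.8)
The plan is to adapt the by-now classical doubling-of-variables technique of Ishii, Crandall--Ishii--Lions and Barles--Buckdahn--Pardoux for viscosity solutions of second-order PDEs, in the form used by Buckdahn and Li \cite{BL2007} and Buckdahn, Cardaliaguet and Quincampoix \cite{BCQ2011}, taking care of the obstacle term $\min\{\,\cdot\,-h_j,\ \cdot\,\}$. Since the statement is for each fixed $j$ and for either $H_j^-$ or $H_j^+$, and both Hamiltonians satisfy the same structural estimates (the only difference is the order of $\sup_u$ and $\inf_v$, which is irrelevant for the comparison argument), I will treat one generic Hamiltonian $H$ of the min-max/max-min type. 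The first step is a standard reduction: assume for contradiction that $M:=\sup_{[0,T]\times\mathbb{R}^n}(u_1-u_2)>0$; introduce for $\lambda>0$ the perturbed function $u_1(t,x)-\lambda/t$ (or $e^{Kt}$-type weights) to force the supremum to be attained away from $t=0$ and strictly positive, and, because the growth class $\Theta$ allows more than polynomial growth, use the auxiliary function $\chi(t,x)=\exp\{(C_0-t)\,\phi(x)\}$ with $\phi(x)=[\log((|x|^2+1)^{1/2})]^2+1$, which is the standard Barles--Biton--Bourgoing--Ley device for Hamiltonians that are only locally Lipschitz in $x$ with a logarithmic-type growth; this compactifies the problem so that all suprema below are attained.

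The second step is the doubling of variables: for $\varepsilon>0$ consider
\[
\Phi_\varepsilon(t,x,y)=u_1(t,x)-u_2(t,y)-\frac{|x-y|^2}{2\varepsilon}-\text{(weights)},
\]
let $(t_\varepsilon,x_\varepsilon,y_\varepsilon)$ be a maximum point, and recall the standard facts that $|x_\varepsilon-y_\varepsilon|^2/\varepsilon\to0$ and $\Phi_\varepsilon\to M'>0$ as $\varepsilon\to0$, and that $t_\varepsilon$ stays in $(0,T]$. Here the obstacle term must be handled: at the maximum point either $u_1(t_\varepsilon,x_\varepsilon)-h_j(t_\varepsilon,x_\varepsilon)\le 0$, in which case, using the supersolution inequality $u_2(t_\varepsilon,y_\varepsilon)\ge h_j(t_\varepsilon,y_\varepsilon)$ together with the Lipschitz continuity of $h_j$ in $x$ (assumption (H3.3)) and of $u_1,u_2$ in $x$, one gets $u_1(t_\varepsilon,x_\varepsilon)-u_2(t_\varepsilon,y_\varepsilon)\le h_j(t_\varepsilon,x_\varepsilon)-h_j(t_\varepsilon,y_\varepsilon)\le L|x_\varepsilon-y_\varepsilon|\to0$, contradicting $M'>0$; or else $u_1(t_\varepsilon,x_\varepsilon)-h_j(t_\varepsilon,x_\varepsilon)>0$, and then the subsolution property forces $-\partial_t\varphi - H^\pm(\cdots)\le0$ with the test function coming from the doubling, so we are reduced to the \emph{purely parabolic} comparison. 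This dichotomy is the key place where the obstacle enters, and it is exactly the ``supplementary complexity'' mentioned in the introduction.

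In the third step I apply the Crandall--Ishii lemma (theorem on sums) at $(t_\varepsilon,x_\varepsilon,y_\varepsilon)$ to produce numbers $b_1,b_2$ with $b_1-b_2=\partial_t(\text{weights})$ and symmetric matrices $X,Y$ with the usual bound
\[
\begin{pmatrix}X&0\\0&-Y\end{pmatrix}\le \frac{3}{\varepsilon}\begin{pmatrix}I&-I\\-I&I\end{pmatrix}+\text{(lower-order from weights)},
\]
plug these into the two viscosity inequalities, subtract, and estimate $H(t_\varepsilon,x_\varepsilon,\cdot,p_\varepsilon,X)-H(t_\varepsilon,y_\varepsilon,\cdot,p_\varepsilon,Y)$ from above using: (a) the Lipschitz continuity of $b,\sigma$ in $x$ (H3.2) to control $p^Tb$ and, crucially, $\mathrm{tr}(\sigma\sigma^T X-\sigma\sigma^T Y)\le C|x_\varepsilon-y_\varepsilon|^2/\varepsilon$ via the matrix inequality; (b) the Lipschitz continuity of $f_j$ in $(x,y,z)$ (H3.3), where the $y$-dependence is handled by the perturbation/weight which contributes a strictly negative term, and the $z$-dependence $p^T\sigma$ is again Lipschitz in $x$. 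Passing $\varepsilon\to0$ and then removing the auxiliary weights yields $0<\delta M'\le 0$, a contradiction, which proves $u_1\le u_2$. The main obstacle I anticipate is purely technical bookkeeping: keeping the logarithmic growth weight $\chi$ consistent through the Crandall--Ishii lemma (its first and second derivatives must be absorbed into the estimates, and one checks that the structural condition of Barles--Biton--Ley is met because $b,\sigma$ are globally Lipschitz and $f_j$ Lipschitz in $x$, so the growth of $H$ in $x$ is at most linear), and then carrying the obstacle dichotomy uniformly in $\varepsilon$; the parabolic part itself is entirely routine given the stated assumptions.
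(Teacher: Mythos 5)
Your proposal is correct and follows essentially the route the paper itself takes: the paper omits the proof and defers to the arguments of Buckdahn and Li \cite{BL2007}, which are exactly the doubling-of-variables comparison with the logarithmic-growth weight $\exp\{(C_0-t)([\log((|x|^2+1)^{1/2})]^2+1)\}$ adapted to the class $\Theta$, the Crandall--Ishii lemma, and the dichotomy on the subsolution side of the $\min$ (using that the supersolution always satisfies $u_2\ge h_j$ together with the Lipschitz continuity of $h_j$ in $x$ from (H3.3)). The only cosmetic slip is your appeal to Lipschitz continuity of $u_1,u_2$ in the obstacle dichotomy, which is neither assumed nor needed, since $u_1(t_\varepsilon,x_\varepsilon)-u_2(t_\varepsilon,y_\varepsilon)\le h_j(t_\varepsilon,x_\varepsilon)-h_j(t_\varepsilon,y_\varepsilon)\le L|x_\varepsilon-y_\varepsilon|$ already suffices.
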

By means of the  arguments in  Buckdahn  and Li \cite{BL2007}, we
can give the proof of this theorem and the proof is omitted here.

\begin{remark}
By Proposition \ref{pro} we see that   $W_{j}$ (resp., $U_{j}$) is a
viscosity solution of linear growth. Therefore, from the above
theorem we know that  $W_{j}$ (resp., $U_{j}$) is the unique
viscosity solution in $\Theta$ of the system (\ref{e25}) (resp.,
(\ref{e26})).
\end{remark}

 \noindent {\bf Isaacs condition:}\vskip2mm

 For all
$ (t, x, y, p, A, u,v)\in [0, T]\times{\mathbb{R}}^n\times\mathbb{R}
\times \mathbb{R}^{n}\times \mathbb{S}^{n}\times U\times V,$
$j=1,2,$ we have
\begin{eqnarray}\label{Isaacs}
\begin{aligned}
&\sup\limits_{u\in U}\inf\limits_{v\in V}
\Big\{\dfrac{1}{2}tr(\sigma\sigma^{T}(t,
x, u, v) A)+ p^{T} b(t, x, u, v)+ f_{j}(t, x, y,  p^{T}\sigma(t, x, u, v), u, v) \Big\}\\
=&\inf\limits_{v\in V}\sup\limits_{u\in U}
\Big\{\dfrac{1}{2}tr(\sigma\sigma^{T}(t, x, u, v) A)+ p^{T} b(t, x,
u, v)+ f_{j}(t, x, y,  p^{T}\sigma(t, x, u, v), u, v) \Big\}.
 \end{aligned}
\end{eqnarray}

\begin{corollary}\label{c1}
Let Isaacs condition (\ref{Isaacs}) hold. Then we have,  for all $
(t, x)\in [0, T]\times{\mathbb{R}}^n$,
\begin{eqnarray*}
(U_{1}(t,x),U_{2}(t,x))=(W_{1}(t,x),W_{2}(t,x)).
\end{eqnarray*}
\end{corollary}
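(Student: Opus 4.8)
The plan is to identify $W_j$ and $U_j$ as viscosity solutions of the \emph{same} Isaacs equation with obstacle, and then invoke the uniqueness statement already available (Theorem \ref{t3} together with the Remark following it). By Theorem \ref{t1}, $W_j$ is a viscosity solution of \eqref{e25}, which involves the lower Hamiltonian $H_j^-$, and $U_j$ is a viscosity solution of \eqref{e26}, which involves the upper Hamiltonian $H_j^+$. The Isaacs condition \eqref{Isaacs} asserts precisely that $H_j^-(t,x,y,p,A)=H_j^+(t,x,y,p,A)$ for every admissible tuple $(t,x,y,p,A)$; note that the obstacle term $h_j(t,x)$ and the terminal datum $\Phi_j(x)$ appearing in \eqref{e25} and \eqref{e26} are identical, so under \eqref{Isaacs} the two obstacle problems \eqref{e25} and \eqref{e26} coincide verbatim as PDEs.

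Concretely, I would proceed as follows. First, fix $j\in\{1,2\}$ and observe that, because of \eqref{Isaacs}, the equation \eqref{e26} for $U_j$ is literally the equation \eqref{e25}. Hence $W_j$ is a viscosity solution of \eqref{e25} and $U_j$ is also a viscosity solution of \eqref{e25}. Second, by Proposition \ref{pro}, both $W_j$ and $U_j$ are of linear growth in $x$ (and Hölder in $t$), hence in particular continuous and belonging to the class $\Theta$, since linear growth is dominated by $\exp\{A[\log((|x|^2+1)^{1/2})]^2\}$ for any $A>0$. Third, apply Theorem \ref{t3} in both directions: viewing $W_j$ as an (upper semicontinuous) subsolution and $U_j$ as a (lower semicontinuous) supersolution of \eqref{e25} gives $W_j(t,x)\le U_j(t,x)$ for all $(t,x)$; interchanging the roles — $U_j$ as subsolution, $W_j$ as supersolution — gives $U_j(t,x)\le W_j(t,x)$. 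Therefore $W_j\equiv U_j$ on $[0,T]\times\mathbb{R}^n$. Applying this for $j=1$ and $j=2$ yields $(U_1(t,x),U_2(t,x))=(W_1(t,x),W_2(t,x))$, as claimed. Equivalently, one may simply invoke the Remark after Theorem \ref{t3}: $W_j$ is \emph{the} unique viscosity solution in $\Theta$ of \eqref{e25}, and under \eqref{Isaacs} $U_j$ is a viscosity solution in $\Theta$ of that same equation, so they must agree.

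The argument is essentially a bookkeeping reduction, so there is no genuine analytical obstacle here — the substantive work has already been done in establishing Theorem \ref{t1} (the two value functions solve their respective Isaacs equations) and Theorem \ref{t3} (comparison/uniqueness in $\Theta$). The only point that requires a line of care is checking that $W_j$ and $U_j$ actually lie in $\Theta$ and satisfy the semicontinuity hypotheses of Theorem \ref{t3}; this is immediate from Proposition \ref{pro}, since continuity plus linear growth comfortably implies membership in $\Theta$ and both upper and lower semicontinuity. One should also note explicitly that the two obstacle problems share the same obstacle $h_j$ and terminal condition $\Phi_j$, so that equality of the Hamiltonians is all that is needed to conclude the PDEs coincide.
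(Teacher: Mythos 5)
Your proposal is correct and is exactly the argument the paper intends: under \eqref{Isaacs} the two obstacle problems \eqref{e25} and \eqref{e26} coincide, both $W_j$ and $U_j$ solve them in the viscosity sense by Theorem \ref{t1}, both lie in $\Theta$ by Proposition \ref{pro}, and the comparison Theorem \ref{t3} applied in both directions forces $W_j\equiv U_j$. The paper states the corollary without writing out this routine deduction, so there is nothing to add.
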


In a symmetric way, for all $(t,x)\in [0,T]\times\mathbb{R}^n$, we
put
\begin{eqnarray*}\label{}
\overline{W}_{j}(t,x):=\esssup_{\beta\in\mathcal {B}_{t,T}}
\essinf_{\alpha\in\mathcal {A}_{t,T}} J_{j}(t,x;\alpha,\beta),
\end{eqnarray*}
and
\begin{eqnarray*}\label{}
\overline{U}_{j}(t,x):=\essinf_{\alpha\in\mathcal {A}_{t,T}}
\esssup_{\beta\in\mathcal {B}_{t,T}} J_{j}(t,x;\alpha,\beta).
\end{eqnarray*}
 Using the arguments in
\cite{BCQ2011} and \cite{L2011a}, we have the following
propositions.
\begin{proposition}
Under the assumptions (H3.1)--(H3.3), for all
$(t,x)\in[0,T]\times\mathbb{R}^n$, the value functions
$\overline{W}_{j}(t,x)$ and $ \overline{U}_{j}(t,x)$ are
deterministic functions.
\end{proposition}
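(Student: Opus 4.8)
The plan is to reduce this to the analogous statement already recorded above for $W_{j}$ and $U_{j}$: since $\overline{W}_{j}$ (resp.\ $\overline{U}_{j}$) is obtained from $U_{j}$ (resp.\ $W_{j}$) simply by interchanging the two players, i.e.\ by swapping $\mathcal{A}_{t,T}\leftrightarrow\mathcal{B}_{t,T}$ and $\alpha\leftrightarrow\beta$ inside $J_{j}(t,x;\alpha,\beta)$, and since the whole data of the game ($b,\sigma,f_{j},\Phi_{j},h_{j}$, together with the two strategy sets) is symmetric in the players, $\overline{W}_{j}$ and $\overline{U}_{j}$ are the lower and upper value functions of a game still satisfying (H3.1)--(H3.3), to which the earlier proposition applies verbatim. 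For completeness I would recall the mechanism, which is the one used in \cite{BCQ2011} and \cite{L2011a}.

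First, $J_{j}(t,x;\alpha,\beta)={}^{j}Y^{t,x;u,v}_{t}$, with $(u,v)$ the unique couple associated with $(\alpha,\beta)$ by Lemma \ref{l2}, is $\mathcal{F}_{t}$-measurable (by the RBSDE representation and Lemma \ref{l1}); since the families over which the essential supremum and infimum are taken are directed, these can be realised along countable monotone subfamilies, and hence $\overline{W}_{j}(t,x)$ and $\overline{U}_{j}(t,x)$ are well-defined $\mathcal{F}_{t}$-measurable random variables. Second, for fixed $t$ I would introduce the family of transformations $\tau_{h}\colon\Omega\to\Omega$, $h\in\mathcal{H}^{2}(0,t;\mathbb{R}^{d})$, given by $(\tau_{h}\omega)(s)=\omega(s)+\int_{0}^{s\wedge t}h(r)\,dr$; under each $\tau_{h}$ the law $\mathbb{P}$ is quasi-invariant (Cameron--Martin) and the increments $\{B_{s}-B_{t}:s\in[t,T]\}$ are left unchanged. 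One checks that $\tau_{h}$ is a bijection of $\mathcal{A}_{t,T}$ onto itself and of $\mathcal{B}_{t,T}$ onto itself (the nonanticipativity and the delay structure of Definition \ref{d1} involve only $\mathbb{F}$-stopping times and sets $\Gamma\in\mathcal{F}_{t}$, which are transported consistently), and that the forward equation \eqref{eq1} and the RBSDE \eqref{BSDE} --- including the reflecting process ${}^{j}K^{t,x;u,v}$ and the obstacle $h_{j}(s,X^{t,x;u,v}_{s})$, which depends on the noise only through $X^{t,x;u,v}$ --- are equivariant: transporting the randomness by $\tau_{h}$ while relabelling the controls accordingly leaves $J_{j}(t,x;\alpha,\beta)$ unchanged up to replacing $(\alpha,\beta)$ by its image under $\tau_{h}$. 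Taking $\essinf$ and $\esssup$, and using that $\tau_{h}$ preserves $\mathbb{P}$-null sets, one deduces $\overline{W}_{j}(t,x)\circ\tau_{h}=\overline{W}_{j}(t,x)$ and $\overline{U}_{j}(t,x)\circ\tau_{h}=\overline{U}_{j}(t,x)$, $\mathbb{P}$-a.s.

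Third, an $\mathcal{F}_{t}$-measurable random variable that is $\mathbb{P}$-a.s.\ invariant under all $\tau_{h}$, $h\in\mathcal{H}^{2}(0,t;\mathbb{R}^{d})$, is $\mathbb{P}$-a.s.\ equal to a constant, since the Cameron--Martin translations supported on $[0,t]$ act ergodically on $(\Omega,\mathcal{F}_{t},\mathbb{P})$ (any $\mathcal{F}_{t}$-measurable invariant set has probability $0$ or $1$); this yields the claim for $\overline{W}_{j}$, and the argument for $\overline{U}_{j}$ is identical. I expect the main obstacle to be the equivariance check in the second step in the presence of the obstacle: one has to verify that the reflecting term ${}^{j}K^{t,x;u,v}$, characterised by the Skorokhod condition $\int(^{j}Y^{t,x;u,v}-h_{j}(\cdot,X^{t,x;u,v}))\,d\,{}^{j}K^{t,x;u,v}=0$, transforms correctly and that the induced bijection of NAD strategies respects the delay stopping times; however, since the obstacle enters only through the forward state, which is itself equivariant, this requires no idea beyond what was already needed for $W_{j}$ and $U_{j}$. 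The remaining ingredients --- passing $\tau_{h}$ through $\esssup$ and $\essinf$, and the continuity in $h$ used to upgrade the a.s.\ invariance from a countable dense set of $h$ to all of them --- are routine and rely on the $L^{2}$-estimates of Lemmas \ref{l1} and \ref{l18}.
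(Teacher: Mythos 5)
Your proposal is correct and follows essentially the same route the paper intends: the paper proves nothing here but defers to the Girsanov/Cameron--Martin translation argument of \cite{BCQ2011} and \cite{L2011a}, which is exactly what you reconstruct (invariance of $J_{j}$, and hence of the essential extrema, under translations of the pre-$t$ Brownian path, plus the fact that an $\mathcal{F}_{t}$-measurable random variable invariant under all such translations is a.s.\ constant). Your preliminary observation that $\overline{W}_{j}$ and $\overline{U}_{j}$ arise from $U_{j}$ and $W_{j}$ by interchanging the players, so the earlier proposition applies verbatim, is also the reason the paper states this as an immediate analogue.
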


\begin{proposition}\label{pp1}
Under the assumptions (H3.1)--(H3.3)  we have the following dynamic
programming principle: for all $0<\delta\leq T-t,
x\in\mathbb{R}^{n}$,
\begin{equation*}
\overline{W}_{j}(t,x)=\esssup_{\beta\in\mathcal
{B}_{t,t+\delta}}\essinf_{\alpha\in\mathcal {A}_{t,t+\delta}} \
^{j}G^{t,x;\alpha,\beta}_{t,t+\delta}[\overline{W}_{j}(t+\delta,X^{t,x;\alpha,\beta}_{t+\delta})],
\end{equation*}
and
\begin{equation*}
\overline{U}_{j}(t,x)=\essinf_{\alpha\in\mathcal
{A}_{t,t+\delta}}\esssup_{\beta\in\mathcal {B}_{t,t+\delta}} \
^{j}G^{t,x;\alpha,\beta}_{t,t+\delta}[\overline{U}_{j}(t+\delta,X^{t,x;\alpha,\beta}_{t+\delta})].
\end{equation*}
\end{proposition}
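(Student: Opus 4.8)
The plan is to prove Proposition \ref{pp1} by following exactly the argument that establishes Proposition \ref{p1}, replacing the roles of the two players throughout. Since $\overline{W}_{j}$ and $\overline{U}_{j}$ are obtained from $W_{j}$ and $U_{j}$ by interchanging the supremum over $\mathcal{A}$ with the supremum over $\mathcal{B}$ (and symmetrically for the infima), and since no structural asymmetry between $\mathcal{A}_{t,T}$ and $\mathcal{B}_{t,T}$ was used in the proof of Proposition \ref{p1}, the dynamic programming principle transfers verbatim. Concretely, I would first fix $(t,x)$ and $0<\delta\le T-t$, and define the right-hand side
\[
\widetilde{W}_{j}(t,x):=\esssup_{\beta\in\mathcal{B}_{t,t+\delta}}\essinf_{\alpha\in\mathcal{A}_{t,t+\delta}}\ ^{j}G^{t,x;\alpha,\beta}_{t,t+\delta}[\overline{W}_{j}(t+\delta,X^{t,x;\alpha,\beta}_{t+\delta})],
\]
and aim to show $\overline{W}_{j}(t,x)=\widetilde{W}_{j}(t,x)$ by two inequalities.

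For the inequality $\overline{W}_{j}(t,x)\le\widetilde{W}_{j}(t,x)$, I would take an arbitrary $\beta\in\mathcal{B}_{t,T}$, restrict it to $[t,t+\delta]$, and for each admissible response $\alpha$ on $[t,t+\delta]$ use the semigroup property $J_{j}(t,x;\alpha,\beta)=\ ^{j}G^{t,x;\alpha,\beta}_{t,t+\delta}[J_{j}(t+\delta,X^{t,x;\alpha,\beta}_{t+\delta};\alpha,\beta)]$ together with the monotonicity of $^{j}G$ (Lemma \ref{l8}, comparison for reflected BSDEs) and the bound $J_{j}(t+\delta,\cdot;\alpha,\beta)\le\overline{W}_{j}(t+\delta,\cdot)$ obtained by pasting $\beta$ with near-optimal strategies on $[t+\delta,T]$; the delay structure of NAD strategies (Definition \ref{d1}) is what makes this concatenation produce an admissible strategy. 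For the reverse inequality, given $\varepsilon>0$ I would pick a near-optimal $\beta^{1}$ on $[t,t+\delta]$ for $\widetilde{W}_{j}$ and, on $[t+\delta,T]$, a family of near-optimal strategies $\beta^{2}$ indexed measurably by the value of $X^{t,x;\alpha,\beta}_{t+\delta}$ (using that $\overline{W}_{j}$ is deterministic and Lipschitz/Hölder by the analogue of Proposition \ref{pro}), glue them into a single NAD strategy $\beta\in\mathcal{B}_{t,T}$, and run the semigroup estimate backwards, controlling the error terms with the stability estimate Lemma \ref{l18} for reflected BSDEs and the regularity of $\overline{W}_{j}$ in $x$. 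The argument for $\overline{U}_{j}$ is identical with the sup and inf exchanged.

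The main obstacle is the measurable-selection/pasting step that produces a genuine NAD strategy on the whole interval $[t,T]$ from a near-optimal piece on $[t,t+\delta]$ and a $X_{t+\delta}$-dependent near-optimal family on $[t+\delta,T]$: one must verify that the concatenated map $\mathcal{U}_{t,T}\to\mathcal{V}_{t,T}$ is still nonanticipative with delay, i.e. that the stopping-time sequence of the glued strategy can be arranged to satisfy properties (i)--(iv) of Definition \ref{d1}, and that the selection can be taken $\mathcal{F}_{t+\delta}$-measurably. This is handled exactly as in \cite{BCQ2011} and \cite{L2011a}; since the only new ingredient here relative to those references is the reflecting term $^{j}K$, and all the estimates on $^{j}K$ needed for the error control are already recorded in Lemmas \ref{l1} and \ref{l18}, the proof goes through without essential change. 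Accordingly, I would simply indicate that the proof is the same as for Proposition \ref{p1}, referring the reader to \cite{BCQ2011} and \cite{L2011a}, and omit the repetitive details.
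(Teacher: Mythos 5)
Your proposal follows essentially the same route as the paper, which proves neither Proposition \ref{p1} nor Proposition \ref{pp1} in detail but refers both to the arguments of \cite{BCQ2011} and \cite{L2011a}: the statement is the player-symmetric counterpart of Proposition \ref{p1}, and the standard two-inequality argument (semigroup property, comparison via Lemma \ref{l8}, stability via Lemma \ref{l18}, and concatenation of NAD strategies using the delay property) carries over with the roles of $\mathcal{A}$ and $\mathcal{B}$ exchanged. Your sketch is correct in outline and correctly isolates the pasting/measurable-selection step as the only delicate point; just note that in the inequality $\overline{W}_{j}(t,x)\le\widetilde{W}_{j}(t,x)$ it is the \emph{first} player's control $\alpha$ that must be extended past $t+\delta$ by a near-optimal continuation (the infimum being over $\alpha$), not $\beta$.
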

 \noindent {\bf Isaacs condition:}\vskip2mm

 For all
$ (t, x, y, p, A, u,v)\in [0, T]\times{\mathbb{R}}^n\times
\mathbb{R}\times \mathbb{R}^{n}\times \mathbb{S}^{n}\times U\times
V,$  $j=1,2,$ we have
\begin{eqnarray}\label{Isaacs1}
\begin{aligned}
&\inf\limits_{u\in U}\sup\limits_{v\in V}
\Big\{\dfrac{1}{2}tr(\sigma\sigma^{T}(t,
x, u, v) A)+ p^{T} b(t, x, u, v)+ f_{j}(t, x, y,  p^{T}\sigma(t, x, u, v), u, v) \Big\}\\
=&\sup\limits_{v\in V}\inf\limits_{u\in U}
\Big\{\dfrac{1}{2}tr(\sigma\sigma^{T}(t, x, u, v) A)+ p^{T} b(t, x,
u, v)+ f_{j}(t, x, y,  p^{T}\sigma(t, x, u, v), u, v) \Big\}.
 \end{aligned}
\end{eqnarray}

By virtue of arguments in this section, we have the following
proposition.
\begin{proposition}\label{cc1}
Let Isaacs condition (\ref{Isaacs1}) hold. Then we have,  for all $
(t, x)\in [0, T]\times{\mathbb{R}}^n$,
\begin{eqnarray*}
(\overline{U}_{1}(t,x),\overline{U}_{2}(t,x))=(\overline{W}_{1}(t,x),\overline{W}_{2}(t,x)).
\end{eqnarray*}
\end{proposition}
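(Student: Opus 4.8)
The plan is to mirror exactly the chain of results that led to Corollary \ref{c1}, but now for the upper value functions $\overline W_j$ and $\overline U_j$, which are built from the reversed order of \emph{strategy against strategy} (player~2 choosing a NAD strategy first). The skeleton is: (1) establish that $\overline W_j$ and $\overline U_j$ are viscosity solutions of suitable Isaacs systems with obstacle; (2) invoke the comparison principle of Theorem \ref{t3} to get uniqueness in $\Theta$; (3) use Isaacs condition (\ref{Isaacs1}) to identify the two Isaacs operators, forcing $\overline U_j=\overline W_j$.

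In more detail, first I would note that by the symmetric versions of the propositions already stated (the deterministic-function property, the dynamic programming principle in Proposition \ref{pp1}, and the $\tfrac12$-H\"older/Lipschitz regularity obtained exactly as in Proposition \ref{pro}), the functions $\overline W_j$ and $\overline U_j$ lie in $\Theta$ (linear growth suffices). Then, running the Buckdahn--Li \cite{BL2007} / Buckdahn--Cardaliaguet--Quincampoix \cite{BCQ2011} argument behind Theorem \ref{t1} verbatim but with the roles of $\mathcal A_{t,T}$ and $\mathcal B_{t,T}$ interchanged, one shows that $\overline W_j$ is a viscosity solution of
\begin{eqnarray*}
\left\{
\begin{array}{rcl}
\min\Big\{\overline W_{j}(t,x)-h_{j}(t,x),\ -\dfrac{\partial}{\partial t}\overline W_{j}(t,x)-\widetilde H_{j}^{-}(t,x,\overline W_{j},D\overline W_{j},D^2\overline W_{j})\Big\}&=&0,\\
\overline W_{j}(T,x)&=&\Phi_{j}(x),
\end{array}
\right.
\end{eqnarray*}
where $\widetilde H_j^{-}(t,x,y,p,A)=\sup_{v\in V}\inf_{u\in U}H_j(t,x,y,p,A,u,v)$, and that $\overline U_j$ is a viscosity solution of the corresponding equation with $\widetilde H_j^{+}(t,x,y,p,A)=\inf_{u\in U}\sup_{v\in V}H_j(t,x,y,p,A,u,v)$ in place of $\widetilde H_j^{-}$. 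These are the natural Hamiltonians when player~2 commits first.

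Finally, Isaacs condition (\ref{Isaacs1}) says precisely that $\widetilde H_j^{+}=\widetilde H_j^{-}$ for every argument, so $\overline W_j$ and $\overline U_j$ are viscosity solutions of one and the same system with obstacle. Since both belong to $\Theta$ and the system satisfies the hypotheses of Theorem \ref{t3}, the comparison principle (applied in both directions, using that a viscosity solution is simultaneously a sub- and a supersolution) yields $\overline U_j(t,x)\le \overline W_j(t,x)$ and $\overline W_j(t,x)\le \overline U_j(t,x)$ for all $(t,x)$, hence equality; doing this for $j=1$ and $j=2$ gives the asserted identity of pairs. The main obstacle is the first step: one must check that the obstacle-problem viscosity-solution proof of Theorem \ref{t1} genuinely goes through after swapping the two strategy classes — in particular that the dynamic programming principle of Proposition \ref{pp1}, combined with the backward-semigroup estimates from Lemma \ref{l1}, Lemma \ref{l18} and Proposition \ref{p3}, still produces the sub/supersolution inequalities with the \emph{reversed} $\sup\inf$ ordering. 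This is routine given the symmetry of the setup, but it is where all the real work sits; steps (2) and (3) are then immediate.
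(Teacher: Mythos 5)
Your proposal is correct and follows exactly the route the paper intends: the paper justifies Proposition \ref{cc1} only by the phrase ``by virtue of arguments in this section,'' i.e.\ by repeating for $\overline W_j,\overline U_j$ the same chain (viscosity characterization with the reversed Hamiltonians, membership in $\Theta$ via the regularity estimates, and the comparison principle of Theorem \ref{t3} combined with Isaacs condition (\ref{Isaacs1})) that yields Corollary \ref{c1}. Nothing in your argument deviates from this, and your identification of the swapped $\sup\inf$ Hamiltonians as the only point requiring real verification is accurate.
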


\section{Nash equilibrium payoffs}
The objective of this section is to obtain an existence of a Nash
equilibrium payoff. For this, we consider two zero-sum stochastic
differential games associated with $J_{1}$ and $J_{2}$, i.e., the
first player wants to maximize  $J_{1}$ and the second player wants
to minimize $J_{1}$, while the first player wants to minimize
$J_{2}$ and the second player wants to maximize $J_{2}$.

In what follows, we redefine the following notations which are
different from the above sections: for
$(t,x)\in[0,T]\times\mathbb{R}^{n}$,
\begin{eqnarray*}\label{}
W_{1}(t,x):= \esssup_{\alpha\in\mathcal {A}_{t,T}}
\essinf_{\beta\in\mathcal {B}_{t,T}} J_{1}(t,x;\alpha,\beta),\ \
W_{2}(t,x):=\esssup_{\beta\in\mathcal
{B}_{t,T}}\essinf_{\alpha\in\mathcal {A}_{t,T}}
J_{2}(t,x;\alpha,\beta)
\end{eqnarray*}

We suppose that the following holds:\vskip2mm

\noindent {\bf Isaacs condition A:}\vskip2mm
 For all
$ (t, x, y, p, A, u,v)\in [0,
T]\times{\mathbb{R}}^n\times\mathbb{R}\times \mathbb{R}^{n}\times
\mathbb{S}^{n}\times U\times V,$   we have
\begin{eqnarray*}\label{}
\begin{aligned}
&\sup\limits_{u\in U}\inf\limits_{v\in V}
\Big\{\dfrac{1}{2}tr(\sigma\sigma^{T}(t,
x, u, v) A)+ p^{T} b(t, x, u, v)+ f_{1}(t, x, y,  p^{T}\sigma(t, x, u, v), u, v) \Big\}\\
=&\inf\limits_{v\in V}\sup\limits_{u\in U}
\Big\{\dfrac{1}{2}tr(\sigma\sigma^{T}(t, x, u, v) A)+ p^{T} b(t, x,
u, v)+ f_{1}(t, x, y,  p^{T}\sigma(t, x, u, v), u, v) \Big\},
 \end{aligned}
\end{eqnarray*}
and
\begin{eqnarray*}\label{}
\begin{aligned}
&\inf\limits_{u\in U}\sup\limits_{v\in V}
\Big\{\dfrac{1}{2}tr(\sigma\sigma^{T}(t,
x, u, v) A)+ p^{T} b(t, x, u, v)+ f_{2}(t, x, y,  p^{T}\sigma(t, x, u, v), u, v) \Big\}\\
=&\sup\limits_{v\in V}\inf\limits_{u\in U}
\Big\{\dfrac{1}{2}tr(\sigma\sigma^{T}(t, x, u, v) A)+ p^{T} b(t, x,
u, v)+ f_{2}(t, x, y,  p^{T}\sigma(t, x, u, v), u, v) \Big\}.
 \end{aligned}
\end{eqnarray*}

 Under the above condition, from the above section we see that:   for $(t,x)\in[0,T]\times\mathbb{R}^{n}$,
\begin{eqnarray}\label{equa}
&&W_{1}(t,x)= \esssup_{\alpha\in\mathcal {A}_{t,T}}
\essinf_{\beta\in\mathcal {B}_{t,T}}
J_{1}(t,x;\alpha,\beta)=\essinf_{\beta\in\mathcal
{B}_{t,T}}\esssup_{\alpha\in\mathcal {A}_{t,T}}
J_{1}(t,x;\alpha,\beta), \nonumber\\
&& W_{2}(t,x)=\essinf_{\alpha\in\mathcal
{A}_{t,T}}\esssup_{\beta\in\mathcal {B}_{t,T}}
J_{2}(t,x;\alpha,\beta)=\esssup_{\beta\in\mathcal
{B}_{t,T}}\essinf_{\alpha\in\mathcal {A}_{t,T}}
J_{2}(t,x;\alpha,\beta).
\end{eqnarray}
In order to simplify arguments, let us  also assume that the
coefficients $b, \sigma, f_{j}, \Phi_{j}, f_{j}$ and  $ h_{j}$
($j=1,2$), satisfy the assumptions (H3.1)--(H3.4) and  are bounded.

We recall the definition of the Nash equilibrium payoff of
nonzero-sum stochastic differential games, which was introduced in
Buckdahn, Cardaliaguet and Rainer \cite{BCR2004}  and Lin
\cite{L2011}.
\begin{definition}\label{d2}
A couple $(e_{1},e_{2})\in\mathbb{R}^{2}$ is called a Nash
equilibrium payoff at the point $(t,x)$ if for any $\varepsilon>0$,
there exists $(\alpha_{\varepsilon},\beta_{\varepsilon})\in \mathcal
{A}_{t,T}\times \mathcal {B}_{t,T}$ such that, for all
$(\alpha,\beta)\in \mathcal {A}_{t,T}\times \mathcal {B}_{t,T},$
\begin{eqnarray}\label{eq4}
J_{1}(t,x;\alpha_{\varepsilon},\beta_{\varepsilon})\geq
J_{1}(t,x;\alpha,\beta_{\varepsilon})-\varepsilon,\
J_{2}(t,x;\alpha_{\varepsilon},\beta_{\varepsilon})\geq
J_{2}(t,x;\alpha_{\varepsilon},\beta)-\varepsilon,\ \mathbb{P}-a.s.,
\end{eqnarray}
and
\begin{eqnarray*}
|\mathbb{E}[J_{j}(t,x;\alpha_{\varepsilon},\beta_{\varepsilon})]-e_{j}|\leq
\varepsilon, \ j=1,2.
\end{eqnarray*}
\end{definition}

From  Lemma \ref{l2} it follows that the following lemma holds.
\begin{lemma}\label{le3}
For any $\varepsilon>0$ and
$(\alpha_{\varepsilon},\beta_{\varepsilon})\in \mathcal
{A}_{t,T}\times \mathcal {B}_{t,T}$, (\ref{eq4}) holds if and only
if,  for all $(u,v)\in \mathcal {U}_{t,T}\times \mathcal {V}_{t,T}$,
\begin{eqnarray}\label{eq5}
J_{1}(t,x;\alpha_{\varepsilon},\beta_{\varepsilon})\geq
J_{1}(t,x;u,\beta_{\varepsilon}(u))-\varepsilon,\
J_{2}(t,x;\alpha_{\varepsilon},\beta_{\varepsilon})\geq
J_{2}(t,x;\alpha_{\varepsilon}(v),v)-\varepsilon, \ \mathbb{P}-a.s.
\end{eqnarray}
\end{lemma}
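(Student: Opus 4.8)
The plan is to establish the equivalence in Lemma~\ref{le3} by directly invoking Lemma~\ref{l2}, which guarantees that for every pair of NAD strategies $(\alpha,\beta)\in\mathcal{A}_{t,T}\times\mathcal{B}_{t,T}$ there is a unique fixed point $(u,v)\in\mathcal{U}_{t,T}\times\mathcal{V}_{t,T}$ with $\alpha(v)=u$ and $\beta(u)=v$, and conversely that any admissible control can be fed into a strategy. First I would recall that, by the very definition introduced just before Definition~\ref{d2}, $J_j(t,x;\alpha,\beta)$ is \emph{defined} to be $J_j(t,x;u,v)$ where $(u,v)$ is the fixed point of $(\alpha,\beta)$ supplied by Lemma~\ref{l2}; in particular $J_j(t,x;\alpha_\varepsilon,\beta_\varepsilon)=J_j(t,x;u_\varepsilon,v_\varepsilon)$ with $\alpha_\varepsilon(v_\varepsilon)=u_\varepsilon$, $\beta_\varepsilon(u_\varepsilon)=v_\varepsilon$. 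This identification is the backbone of both implications.

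For the direction \eqref{eq5}$\Rightarrow$\eqref{eq4}: given an arbitrary $(\alpha,\beta)\in\mathcal{A}_{t,T}\times\mathcal{B}_{t,T}$, I would apply Lemma~\ref{l2} to the pair $(\alpha,\beta_\varepsilon)$ to obtain its fixed point $(\bar u,\bar v)$, so that $J_1(t,x;\alpha,\beta_\varepsilon)=J_1(t,x;\bar u,\bar v)=J_1(t,x;\bar u,\beta_\varepsilon(\bar u))$ since $\bar v=\beta_\varepsilon(\bar u)$. Now specialize \eqref{eq5} with $u=\bar u$ (and $v$ arbitrary) to get $J_1(t,x;\alpha_\varepsilon,\beta_\varepsilon)\geq J_1(t,x;\bar u,\beta_\varepsilon(\bar u))-\varepsilon=J_1(t,x;\alpha,\beta_\varepsilon)-\varepsilon$. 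The second inequality in \eqref{eq4} is obtained symmetrically: apply Lemma~\ref{l2} to $(\alpha_\varepsilon,\beta)$ with fixed point $(\tilde u,\tilde v)$, note $\tilde u=\alpha_\varepsilon(\tilde v)$ so $J_2(t,x;\alpha_\varepsilon,\beta)=J_2(t,x;\alpha_\varepsilon(\tilde v),\tilde v)$, and use \eqref{eq5} with $v=\tilde v$.

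For the converse \eqref{eq4}$\Rightarrow$\eqref{eq5}: given $(u,v)\in\mathcal{U}_{t,T}\times\mathcal{V}_{t,T}$, I need to exhibit NAD strategies whose induced controls realize $J_1(t,x;u,\beta_\varepsilon(u))$ and $J_2(t,x;\alpha_\varepsilon(v),v)$. The natural choice is the \emph{constant} strategy $\hat\alpha\equiv u$ (the mapping sending every $v'\in\mathcal{V}_{t,T}$ to $u$), which is trivially nonanticipative and has delay; then the fixed point of $(\hat\alpha,\beta_\varepsilon)$ is exactly $(u,\beta_\varepsilon(u))$, whence $J_1(t,x;\hat\alpha,\beta_\varepsilon)=J_1(t,x;u,\beta_\varepsilon(u))$, and applying \eqref{eq4} with $(\alpha,\beta)=(\hat\alpha,\beta_\varepsilon)$ yields the first inequality of \eqref{eq5}. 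Symmetrically, the constant strategy $\hat\beta\equiv v$ paired with $\alpha_\varepsilon$ has fixed point $(\alpha_\varepsilon(v),v)$ and gives the second. The one point requiring a line of care — and the only real obstacle — is verifying that a constant strategy genuinely qualifies as an NAD strategy in the sense of Definition~\ref{d1}, i.e.\ that properties 1) and 2) (with, say, $S_n(v')\equiv T$ for all $n$, or $S_1(v')\equiv T$) are vacuously satisfied; this is immediate but should be stated. With that in hand both implications close and the proof is complete.
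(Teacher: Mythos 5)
Your proof is correct and follows exactly the route the paper intends: the paper states only that the lemma ``follows from Lemma \ref{l2}'', and your argument spells out the two implications via the fixed-point identification $J_j(t,x;\alpha,\beta)=J_j(t,x;u,v)$ together with the observation that constant maps are NAD strategies. Nothing is missing; the verification that a constant strategy satisfies Definition \ref{d1} (with $S_n\equiv T$) is the right point to make explicit.
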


Before giving the main results in this sections we first introduce
the following lemma.
\begin{lemma}\label{le1}
Let $(t,x)\in [0,T]\times \mathbb{R}^{n}$ and $u\in \mathcal
{U}_{t,T}$ be arbitrarily fixed. Then,

(i) for all $\delta\in[0,T-t]$ and $\varepsilon>0$,  there exists an
NAD strategy $\alpha\in \mathcal {A}_{t,T}$ such that, for all
 $v\in \mathcal {V}_{t,T}$,
\begin{eqnarray*}\label{}
\alpha(v)&=&u, \text{on}\ [t,t+\delta],\\
^{2}Y^{t,x;\alpha(v),v}_{t+\delta}&\leq&
W_{2}(t+\delta,X^{t,x;\alpha(v),v}_{t+\delta})+\varepsilon,\
\mathbb{P}-a.s.,
\end{eqnarray*}

(ii) for all $\delta\in[0,T-t]$ and $\varepsilon>0$,  there exists
an NAD strategy  $\alpha\in \mathcal {A}_{t,T}$ such that, for all
 $v\in \mathcal {V}_{t,T}$,
\begin{eqnarray*}\label{}
\alpha(v)&=&u, \text{on}\ [t,t+\delta],\\
^{1}Y^{t,x;\alpha(v),v}_{t+\delta}&\geq&
W_{1}(t+\delta,X^{t,x;\alpha(v),v}_{t+\delta})-\varepsilon, \
\mathbb{P}-a.s.
\end{eqnarray*}
\end{lemma}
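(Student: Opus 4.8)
The plan is to prove (i) only, since (ii) follows by the symmetric argument (replacing $W_2$, $^{2}Y$, and the essinf-over-$\beta$ definition by $W_1$, $^{1}Y$, and the corresponding sup/inf, with the inequality reversed). The strategy is a standard ``pasting'' construction for NAD strategies: on the initial block $[t,t+\delta]$ we force player~1 to play the prescribed control $u$; at time $t+\delta$ we land at the point $(t+\delta, X^{t,x;u,v}_{t+\delta})$, and from there we want player~1 (who is \emph{minimizing} $J_2$, since $W_2(t,x)=\essinf_{\alpha}\esssup_{\beta}J_2$ under Isaacs condition~A) to play an $\varepsilon$-optimal strategy driving the cost close to $W_2$. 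Concretely, first I would fix $v\in\mathcal{V}_{t,T}$, set $\hat x = X^{t,x;u,v}_{t+\delta}$, and invoke the dynamic programming principle for $W_2$ (Proposition \ref{pp1}, second equality, i.e.\ $W_2(t,x)=\essinf_{\alpha\in\mathcal{A}_{t,t+\delta}}\esssup_{\beta}\,{}^{2}G_{t,t+\delta}[W_2(t+\delta,X_{t+\delta})]$) together with the representation $J_2(t+\delta,\hat x;\alpha',\beta')={}^{2}Y^{t+\delta,\hat x;\alpha',\beta'}_{t+\delta}$: this yields, for each starting point, an NAD strategy $\alpha'\in\mathcal{A}_{t+\delta,T}$ with $\esssup_{\beta'}{}^{2}Y^{t+\delta,\hat x;\alpha'(\beta'),\beta'}_{t+\delta}\le W_2(t+\delta,\hat x)+\varepsilon$. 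Since $^{2}Y$ here is evaluated \emph{at its own initial time} $t+\delta$, the terminal-time inequality ${}^{2}Y^{t,x;\alpha(v),v}_{t+\delta}\le W_2(t+\delta,X^{t,x;\alpha(v),v}_{t+\delta})+\varepsilon$ is exactly this DPP/optimality estimate; we do not even need to propagate the RBSDE solution further.

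The central technical issue is measurability: the $\varepsilon$-optimal strategy $\alpha'$ depends on the random endpoint $\hat x=X^{t,x;u,v}_{t+\delta}$, which in turn depends on $v$, so a naive choice would destroy the nonanticipativity and the delay property required of an NAD strategy. The standard remedy, which I would follow, is a discretization-and-pasting argument: partition $\mathbb{R}^n$ into a countable Borel partition $\{O_i\}_{i\ge1}$ of small diameter, pick a representative point $x_i\in O_i$, and use the Lipschitz/Hölder continuity of $W_2$ and the stability estimates for RBSDEs (Lemma \ref{l18} and Proposition \ref{p3}, using that the coefficients are bounded) to control the error incurred by replacing $\hat x$ by $x_i$ on $\{\hat x\in O_i\}$. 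For each $i$ choose a single $\varepsilon$-optimal $\alpha'_i\in\mathcal{A}_{t+\delta,T}$ at the deterministic point $x_i$; then define $\alpha\in\mathcal{A}_{t,T}$ by: on $[t,t+\delta]$, $\alpha(v):=u$ (ignoring $v$), and on $(t+\delta,T]$, $\alpha(v):=\sum_i \mathbf{1}_{\{X^{t,x;u,v}_{t+\delta}\in O_i\}}\,\alpha'_i(v|_{(t+\delta,T]})$. One checks this $\alpha$ is nonanticipative with delay: the block $[t,t+\delta]$ provides the first delay step (during which $\alpha$ is constant, hence trivially satisfies the delay condition), the set $\{X^{t,x;u,v}_{t+\delta}\in O_i\}$ is $\mathcal{F}_{t+\delta}$-measurable so the choice of branch $i$ depends only on information up to $t+\delta$, and on each branch $\alpha'_i$ is itself an NAD strategy on $[t+\delta,T]$; concatenating the delay-partition stopping times of the $\alpha'_i$ after $t+\delta$ gives a valid delay partition for $\alpha$.

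I expect the main obstacle to be the bookkeeping in this pasting step — verifying rigorously that the concatenated map $\alpha$ satisfies all four conditions of Definition \ref{d1} (in particular the delay property iii)--iv), which must hold on sets of the form $[[t,S_n(v)]]\cap([t,T]\times\Gamma)$ for $\Gamma\in\mathcal{F}_t$) and that the countable sum defining $\alpha$ on $(t+\delta,T]$ is genuinely $\mathbb{F}$-progressively measurable and $U$-valued. Once the construction is in place, the estimate is routine: on $\{\hat x\in O_i\}$ we have, using the optimality of $\alpha'_i$ at $x_i$, the continuity of $W_2$, and Lemma \ref{l18} applied with $\Delta S$ coming from $h_2(\cdot, X^{t,x;\alpha(v),v})-h_2(\cdot,\cdot)$ type perturbations,
\begin{eqnarray*}
{}^{2}Y^{t,x;\alpha(v),v}_{t+\delta} = {}^{2}Y^{t+\delta,\hat x;\alpha'_i(v),v}_{t+\delta}
\le {}^{2}Y^{t+\delta,x_i;\alpha'_i(v),v}_{t+\delta} + C|\hat x - x_i|
\le W_2(t+\delta,x_i) + \varepsilon + C\,\mathrm{diam}(O_i),
\end{eqnarray*}
and then $W_2(t+\delta,x_i)\le W_2(t+\delta,\hat x)+C\,\mathrm{diam}(O_i)$ by Proposition \ref{pro}(ii); choosing the mesh of the partition fine enough that $2C\,\mathrm{diam}(O_i)\le\varepsilon$ for all $i$ and then relabelling $2\varepsilon\mapsto\varepsilon$ gives the claim. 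Finally, since $\varepsilon$, $\delta$, and $v$ were arbitrary and the constructed $\alpha$ does not depend on $v$ on $[t,t+\delta]$ (where it equals $u$), both displayed conclusions of (i) hold for all $v\in\mathcal{V}_{t,T}$.
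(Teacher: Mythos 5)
Your proposal is correct and follows exactly the argument the paper defers to (it omits the proof, citing Lin [L2011]): freeze $u$ on $[t,t+\delta]$, use the Isaacs condition to read $W_2$ as $\essinf_\alpha\esssup_\beta J_2$ so that player 1 has an $\varepsilon$-optimal minimizing strategy from each deterministic starting point, and paste these over a fine Borel partition of $\mathbb{R}^n$ indexed by the $\mathcal{F}_{t+\delta}$-measurable position $X^{t,x;u,v}_{t+\delta}$, controlling the discretization error via Proposition \ref{p3} and the Lipschitz continuity of $W_2$. The only step you gloss over is the extraction of a single $\varepsilon$-optimal strategy from the essential infimum (which uses that the family is directed downwards and $W_2$ is deterministic), but that is standard in this literature and does not affect the validity of the sketch.
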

Using arguments similar to Lin \cite{L2011} we can prove this lemma.
The proof is omitted here.  We also need the following lemma, which
can be established by  standard arguments for SDEs.
\begin{lemma}\label{le2}
There exists a positive constant  $C$ such that, for all
$(u,v),(u',v')\in \mathcal {U}_{t,T}\times \mathcal {V}_{t,T}$, and
for all $\mathcal {F}_{r}$-stopping times $S:\Omega\rightarrow[t,T]$
with $X^{t,x;u,v}_{S}=X^{t,x;u',v'}_{S}$, $\mathbb{P}-a.s.,$ it
holds,  for all real $\tau\in[0,T],$
\begin{eqnarray*}\label{}
\mathbb{E}[\sup\limits_{0\leq s\leq \tau}|X^{t,x;u,v}_{(S+s)\wedge
T}-X^{t,x;u',v'}_{(S+s)\wedge T}|^{2} \Big|\mathcal {F}_{t}]\leq
C\tau, \ \mathbb{P}-a.s.
\end{eqnarray*}
\end{lemma}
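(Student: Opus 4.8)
The plan is to estimate the difference of the two controlled diffusions by the standard combination of the It\^o isometry, the Burkholder--Davis--Gundy inequality and Gronwall's lemma, the only non-routine features being that the estimate is started at the random time $S$ (where the two paths agree) rather than at the deterministic time $t$, and that the bound must be made uniform in the controls, which is where the boundedness of $b$ and $\sigma$ assumed throughout this section enters.

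Fix $(u,v),(u',v')\in\mathcal U_{t,T}\times\mathcal V_{t,T}$ and a stopping time $S$ as in the statement, abbreviate $X:=X^{t,x;u,v}$, $X':=X^{t,x;u',v'}$, and set $\Delta_s:=X_{(S+s)\wedge T}-X'_{(S+s)\wedge T}$ for $s\in[0,T]$. Since $t\le S$ and $X_S=X'_S$, $\mathbb P$-a.s., equation (\ref{eq1}) gives, for every $s\in[0,T]$,
\[
\Delta_s=\int_S^{(S+s)\wedge T}\big(b(r,X_r,u_r,v_r)-b(r,X'_r,u'_r,v'_r)\big)\,dr+\int_S^{(S+s)\wedge T}\big(\sigma(r,X_r,u_r,v_r)-\sigma(r,X'_r,u'_r,v'_r)\big)\,dB_r.
\]
Using (H3.2) together with the boundedness of $b$ and $\sigma$, there is a constant $M$ depending only on the Lipschitz constant and the supremum norms of $b,\sigma$ such that
\[
|b(r,X_r,u_r,v_r)-b(r,X'_r,u'_r,v'_r)|^2+|\sigma(r,X_r,u_r,v_r)-\sigma(r,X'_r,u'_r,v'_r)|^2\le M\big(|X_r-X'_r|^2+1\big),
\]
and for $r\in[S,(S+s)\wedge T]$ one has $|X_r-X'_r|^2=|\Delta_{r-S}|^2\le\sup_{0\le\rho\le r-S}|\Delta_\rho|^2$.

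Next I would apply Cauchy--Schwarz to the drift integral and, to the stochastic integral, the conditional forms of Doob's maximal inequality and of the It\^o isometry; these are legitimate because $s\mapsto\int_S^{(S+s)\wedge T}(\cdots)\,dB_r=\int_t^{(S+s)\wedge T}\mathbf 1_{[S,T]}(r)(\cdots)\,dB_r$ is a continuous $\mathbb F$-martingale vanishing at time $t$. Writing $\phi(\tau):=\mathbb E\big[\sup_{0\le s\le\tau}|\Delta_s|^2\,\big|\,\mathcal F_t\big]$, which is finite by Lemma \ref{l4} with $p=2$, these estimates (together with Tonelli to exchange the conditional expectation and the $dr$-integral) lead to
\[
\phi(\tau)\le C_1M(1+T)\,\tau+C_1M\int_0^\tau\phi(\rho)\,d\rho,\qquad \tau\in[0,T],
\]
for a universal constant $C_1$. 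Gronwall's lemma then yields $\phi(\tau)\le C_1M(1+T)e^{C_1MT}\,\tau=:C\tau$ for all $\tau\in[0,T]$, and since $C$ depends only on $T$, the Lipschitz constant of $b,\sigma$ and their sup-norms --- but not on $(u,v)$, $(u',v')$ or $x$ --- this is precisely the claimed inequality.

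The only point requiring genuine care is the treatment of the randomly stopped and shifted integrals: one must verify that, as a process in $s$, the stochastic integral over $[S,(S+s)\wedge T]$ is a true $\mathbb F$-martingale started from the $\mathcal F_t$-measurable value $0$ at $s=0$, so that the conditional Doob and It\^o-isometry estimates apply, and one must invoke the boundedness of $b,\sigma$ at exactly the step where the $|X_r-X'_r|$ terms would otherwise leave a constant depending on $x$ and on the controls. With these observations in hand, the remainder is the routine Gronwall estimate sketched above.
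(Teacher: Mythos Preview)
Your proof is correct and is precisely what the paper means by ``standard arguments for SDEs'': the paper does not actually prove this lemma, it merely states that it ``can be established by standard arguments for SDEs'' and moves on. Your sketch---writing the difference as an integral from the random time $S$, exploiting the boundedness of $b,\sigma$ to absorb the control-dependence, and closing with BDG/Doob plus Gronwall---is exactly that standard argument.

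One small point of phrasing: the claim that $s\mapsto\int_t^{(S+s)\wedge T}\mathbf 1_{[S,T]}(r)(\cdots)\,dB_r$ is ``a continuous $\mathbb F$-martingale'' is slightly imprecise, since $\mathbb F$ is indexed by the original time $r$, not by the shifted parameter $s$. The clean way to justify the conditional BDG estimate is to set $M_r:=\int_t^r\mathbf 1_{[S,T]}(\rho)(\sigma-\sigma')\,dB_\rho$, which is an $(\mathcal F_r)$-martingale vanishing on $[t,S]$, observe that $\sup_{0\le s\le\tau}|\Delta^{\mathrm{mart}}_s|\le\sup_{t\le r\le T}|M_{r\wedge((S+\tau)\wedge T)}|$, and apply the conditional BDG inequality to the stopped martingale $M^{(S+\tau)\wedge T}$ given $\mathcal F_t$. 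This yields exactly the bound you use. With that adjustment the argument is complete.
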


Let us now give one of main results in this section:  the
characterization  of Nash equilibrium payoffs for nonzero-sum
stochastic differential games with reflection. We postpone its proof
to Section 6.
\begin{theorem}\label{t1}
Let  Isaacs condition (\ref{Isaacs}) hold and $(t,x)\in [0,T]\times
 \mathbb{R}^{n}$.  If for all $\varepsilon>0$, there exist
 $u^{\varepsilon}\in \mathcal {U}_{t,T}$ and $v^{\varepsilon}\in \mathcal
 {V}_{t,T}$ such that for all $s\in[t,T]$ and $j=1,2,$\\
\begin{eqnarray}\label{eq6}
\mathbb{P}\Big(\ ^{j}Y^{t,x;u^{\varepsilon},v^{\varepsilon}}_{s}\geq
W_{j}(s,X^{t,x;u^{\varepsilon},v^{\varepsilon}}_{s})-\varepsilon\ |\
\mathcal {F}_{t}\Big)\geq 1-\varepsilon,\ \mathbb{P}-a.s.,
\end{eqnarray}
and
\begin{eqnarray}\label{eq7}
|\mathbb{E}[J_{j}(t,x;u^{\varepsilon},v^{\varepsilon})]-
e_{j}|\leq\varepsilon,
\end{eqnarray}
then $(e_{1},e_{2})\in\mathbb{R}^{2}$ is a Nash equilibrium payoff
at point $(t,x)$.
\end{theorem}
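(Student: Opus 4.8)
The plan is to construct, for each $\varepsilon>0$, an NAD-strategy pair $(\alpha_\varepsilon,\beta_\varepsilon)\in\mathcal{A}_{t,T}\times\mathcal{B}_{t,T}$ that, via Lemma~\ref{le3}, certifies $(e_1,e_2)$ as a Nash equilibrium payoff. The starting data are the near-optimal controls $(u^\varepsilon,v^\varepsilon)$ satisfying \eqref{eq6}--\eqref{eq7}. The idea, following the pattern in Lin \cite{L2011}, is a \emph{trigger}/\emph{punishment} construction: player~$1$'s strategy $\alpha_\varepsilon$ plays $u^\varepsilon$ as long as player~$2$ plays $v^\varepsilon$, and switches to a punishing strategy the first time a deviation is detected; symmetrically for $\beta_\varepsilon$. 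To make this precise on a discrete grid, I would fix a large integer $N$, set $\delta=(T-t)/N$ and partition $[t,T]$ into $[t_k,t_{k+1}]$, $t_k=t+k\delta$. On each block, as long as no deviation has occurred, $\alpha_\varepsilon$ outputs $u^\varepsilon$; if player~$2$ deviates from $v^\varepsilon$ during block $k$ (measured on $[[t,t_k]]$, using the nonanticipativity/delay structure of Definition~\ref{d1}), then from $t_{k+1}$ on, $\alpha_\varepsilon$ switches to the strategy from Lemma~\ref{le1}(ii) associated with the relevant remaining control, which guarantees $^1Y_{t_{k+1}+\delta}\ge W_1(t_{k+1}+\delta,X_{\cdot})-\varepsilon$, i.e. it drags player~$1$'s payoff up to the upper value of the zero-sum game in $J_1$. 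Since under Isaacs condition $W_1$ is that common value \eqref{equa}, a deviating player~$2$ cannot push $J_1$ below $W_1$, hence cannot gain. The analogous punishment for $\beta_\varepsilon$ uses Lemma~\ref{le1}(i) together with the fact that $W_2$ is the common value in the $J_2$-game.

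The verification then splits into the two inequalities of \eqref{eq5}. First I would show the ``no-deviation'' payoff is close to $(e_1,e_2)$: if player~$2$ plays $v^\varepsilon$ throughout, then by Lemma~\ref{l2} the induced control pair is exactly $(u^\varepsilon,v^\varepsilon)$, so $J_j(t,x;\alpha_\varepsilon,\beta_\varepsilon)=J_j(t,x;u^\varepsilon,v^\varepsilon)$ and \eqref{eq7} gives $|\mathbb{E}[J_j(t,x;\alpha_\varepsilon,\beta_\varepsilon)]-e_j|\le\varepsilon$. Next, the deviation estimate: suppose player~$2$ uses an arbitrary $v\in\mathcal{V}_{t,T}$ and let $S$ be the (random) first grid time at which $v\ne v^\varepsilon$. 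Before $S$ the trajectory coincides with $X^{t,x;u^\varepsilon,v^\varepsilon}$; at $S$ we invoke the backward-semigroup/DPP representation
\[
J_1(t,x;u^\varepsilon,\beta_\varepsilon(v)) = {}^1G^{t,x;u^\varepsilon,v^\varepsilon}_{t,S}\big[\,{}^1Y^{\cdots}_{S}\big],
\]
and from $S$ onward the punishing branch of $\alpha_\varepsilon$ forces, at the next grid point, $^1Y\ge W_1-\varepsilon$; combined with \eqref{eq6} at time $S$ (which says $^1Y^{t,x;u^\varepsilon,v^\varepsilon}_S\ge W_1(S,X_S)-\varepsilon$ with high conditional probability) and the comparison theorem Lemma~\ref{l8} for RBSDEs, this yields $J_1(t,x;\alpha_\varepsilon,\beta_\varepsilon)\ge J_1(t,x;u^\varepsilon,\beta_\varepsilon(v))-C\varepsilon$ for a universal constant $C$. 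Replacing $\varepsilon$ by $\varepsilon/C$ at the outset gives \eqref{eq5}; the second inequality is symmetric, with the roles of $W_1$/Lemma~\ref{le1}(ii) replaced by $W_2$/Lemma~\ref{le1}(i).

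The continuity/estimate glue is where the obstacle constraint bites. The main obstacle will be controlling the error introduced by the discretization and by the ``small-probability bad set'' in \eqref{eq6}: one must show that
\[
\mathbb{E}\big[\,W_j(S,X^{t,x;u^\varepsilon,v^\varepsilon}_S) - W_j(S,X^{t,x;u^\varepsilon,v}_S)\,\big]
\]
is $O(\sqrt{\delta})$ uniformly, using Lemma~\ref{le2} (the $\sqrt{\tau}$ trajectory estimate after a common stopping time) together with the Lipschitz/H\"older regularity of $W_j$ from Proposition~\ref{pro}, and that the $\varepsilon$-probability exceptional set contributes only $O(\varepsilon)$ after using the linear-growth bound $|{}^jY_t|\le C(1+|x|)$ from Proposition~\ref{p3} and boundedness of the coefficients. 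Propagating these estimates through the RBSDE stability bound Lemma~\ref{l18}—whose $\Delta S$ term comes with a square-root and an auxiliary $\Psi_{t,T}$ factor, precisely the ``supplementary complexity'' from the obstacle—requires care, but since all coefficients are assumed bounded here, the $\Psi_{t,T}$ factors are uniformly bounded and the argument closes. Finally one chooses $N$ large (so $\sqrt{\delta}<\varepsilon$) and then $\varepsilon$ small; a diagonal/relabelling argument upgrades the family $(\alpha_\varepsilon,\beta_\varepsilon)$ to one satisfying Definition~\ref{d2} exactly, completing the proof.
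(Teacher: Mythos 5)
Your overall architecture is the paper's: a grid $t=t_0<\cdots<t_m=T$, a trigger time at the first deviation rounded up to the next grid point, Lemma~\ref{le2} and Proposition~\ref{pro} to compare trajectories and values after the deviation time, Lemma~\ref{l18} to propagate errors through the reflected backward semigroup (with the square-root obstacle term absorbed by boundedness), an exceptional set of probability $\varepsilon$ from \eqref{eq6}, and finally Lemma~\ref{le3}. The no-deviation step ($\alpha_\varepsilon(v^{\varepsilon})=u^{\varepsilon}$, $\beta_\varepsilon(u^{\varepsilon})=v^{\varepsilon}$, hence \eqref{eq7} gives closeness to $(e_1,e_2)$) is also correct.

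However, there is a genuine error: the punishment strategies are attached to the wrong payoffs. The second inequality of \eqref{eq5} requires $J_2(t,x;\alpha_\varepsilon(v),v)\le J_2(t,x;u^{\varepsilon},v^{\varepsilon})+\varepsilon$ for all $v$, i.e.\ $\alpha_\varepsilon$'s punishment must prevent the deviating player~2 from increasing \emph{his own} functional $J_2$. So upon detection $\alpha_\varepsilon$ must switch to the strategy of Lemma~\ref{le1}\,(i), which forces ${}^{2}Y^{t,x;\alpha(v),v}_{t_i}\le W_2(t_i,X_{t_i})+\varepsilon$; combined with \eqref{eq6} for $j=2$ (along the non-deviating play ${}^{2}Y_s\ge W_2(s,X_s)-\varepsilon$ with high probability), the deviation gains at most $O(\varepsilon)$. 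You instead have $\alpha_\varepsilon$ invoke Lemma~\ref{le1}\,(ii), guaranteeing ${}^{1}Y\ge W_1-\varepsilon$, and argue that ``a deviating player~2 cannot push $J_1$ below $W_1$, hence cannot gain.'' That is a zero-sum reflex: in the nonzero-sum game player~2's deviation incentive is measured in $J_2$, and your $\alpha_\varepsilon$ gives no control whatsoever on ${}^{2}Y^{t,x;\alpha_\varepsilon(v),v}$, hence no bound on $J_2(t,x;\alpha_\varepsilon(v),v)$. The same swap occurs for $\beta_\varepsilon$: its punishment must hold $J_1$ down to $W_1+\varepsilon$ (the mirror image of Lemma~\ref{le1}\,(i), available because $W_1=\essinf_{\beta}\esssup_{\alpha}J_1$ under \eqref{equa}), not control $J_2$; note also that Lemma~\ref{le1}\,(i) produces a strategy for player~1, not an element of $\mathcal{B}_{t,T}$. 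Relatedly, your deviation estimate conflates the two conditions: $J_1(t,x;u^{\varepsilon},\beta_\varepsilon(v))$ is not a well-formed object, and the inequality to be proved for $J_1$ quantifies over player~1's deviations $u$ against $\beta_\varepsilon$, not over $v$. Once the roles are corrected --- $\alpha_\varepsilon$ built from Lemma~\ref{le1}\,(i) to bound $J_2$ using \eqref{eq6} with $j=2$, and $\beta_\varepsilon$ from the symmetric counterpart to bound $J_1$ using \eqref{eq6} with $j=1$ --- the rest of your outline closes as in the paper.
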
\vskip2mm

Before giving   the existence theorem of a Nash equilibrium payoff
we first establish the following proposition, which is crucial for
the proof of the existence theorem of a Nash equilibrium payoff.
\begin{proposition}\label{p2}
Under the assumptions of Theorem \ref{t1},  for all $\varepsilon>0,$
there exists
$(u^{\varepsilon},v^{\varepsilon})\in\mathcal{U}_{t,T}\times\mathcal{V}_{t,T}$
independent of $\mathcal {F}_{t}$ such that, for all $t\leq
s_{1}\leq s_{2}\leq T$, $j=1,2$,
\begin{eqnarray*}\label{}
\mathbb{P}\Big(\
 W_{j}(s_{1},X^{t,x;u^{\varepsilon},v^{\varepsilon}}_{s_{1}})-\varepsilon\leq
\ ^{j}G^{t,x;u^{\varepsilon},v^{\varepsilon}}_{s_{1},s_{2}}
 [W_{j}(s_{2},X^{t,x;u^{\varepsilon},v^{\varepsilon}}_{s_{2}})]\Big |\mathcal {F}_{t}\Big)> 1- \varepsilon.
\end{eqnarray*}
\end{proposition}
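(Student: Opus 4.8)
The plan is to prove Proposition \ref{p2} by a backward-in-time construction over a finite partition of $[t,T]$, pasting together finitely many approximately-optimal controls obtained from the dynamic programming principle (Proposition \ref{p1}) together with the Isaacs condition (\ref{Isaacs}), which via Corollary \ref{c1} gives $W_j$ as the value of the associated zero-sum game in both orders. Fix $\varepsilon>0$, pick $N$ large, set $\delta=(T-t)/N$ and $t_i=t+i\delta$. At each step I would apply the dynamic programming principle to both $W_1$ (which is an $\esssup_\alpha\essinf_\beta$, equivalently $\essinf_\beta\esssup_\alpha$) and $W_2$, together with Lemma \ref{le1}, to select on each interval $[t_i,t_{i+1}]$ a control pair that is simultaneously $\varepsilon_N$-optimal for the first player's game against any response of the second player and vice versa; here $\varepsilon_N$ must be chosen so that the errors accumulate to at most $\varepsilon$ over the $N$ steps (geometric-type accumulation because the backward semigroup $^{j}G$ is Lipschitz/comparison-monotone in its terminal datum, by Lemma \ref{l18} and Lemma \ref{l8}). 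I would also invoke Proposition \ref{p2}'s "independent of $\mathcal F_t$" requirement by running the whole construction on the shifted probability space in the spirit of Lin \cite{L2011}, i.e. choosing all selected controls to be independent of $\mathcal F_t$, which is legitimate since $W_j$ is deterministic.

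The key steps, in order: (1) Using (\ref{equa}) and Proposition \ref{p1}, on $[t_{N-1},T]$ choose $(u^{N-1},v^{N-1})$, independent of $\mathcal F_t$, so that $^{j}G^{t,x;u^{N-1},v^{N-1}}_{t_{N-1},T}[W_j(T,X_T)] = {}^{j}G^{t,x;u^{N-1},v^{N-1}}_{t_{N-1},T}[\Phi_j(X_T)]$ is within $\varepsilon_N$ of $W_j(t_{N-1},X_{t_{N-1}})$ with conditional probability $>1-\varepsilon_N$; the two inequalities (for $j=1,2$) come from the two min-max orders guaranteed equal by the Isaacs condition. (2) Proceed backward: given $(u^{i+1},v^{i+1},\dots)$ defined on $[t_{i+1},T]$, use the DPP over $[t_i,t_{i+1}]$ to choose $(u^i,v^i)$ on that interval so that $^{j}G_{t_i,t_{i+1}}[W_j(t_{i+1},X_{t_{i+1}})]\geq W_j(t_i,X_{t_i})-\varepsilon_N$ with high conditional probability, for both $j$. (3) Concatenate the $(u^i,v^i)$ into $(u^\varepsilon,v^\varepsilon)\in\mathcal U_{t,T}\times\mathcal V_{t,T}$. (4) For general $t\le s_1\le s_2\le T$, write $[s_1,s_2]$ as a union of sub-intervals of the partition plus two end pieces, and use the flow/semigroup property $^{j}G_{s_1,s_2}=\ {}^{j}G_{s_1,s_1'}\circ\cdots\circ{}^{j}G_{s_k',s_2}$ together with the comparison theorem Lemma \ref{l8} (monotonicity of $^{j}G$ in the terminal value) to chain the per-step estimates; on the short end pieces I would use Lemma \ref{le2} and the Lipschitz estimates of Lemma \ref{l18} / Proposition \ref{pro} to absorb the mismatch between $W_j(s,X_s)$ at a non-grid point $s$ and its grid neighbour into $O(\sqrt\delta)$, choosing $\delta$ small. (5) Finally take a union bound over the finitely many exceptional events to get total exceptional probability $<\varepsilon$, and choose $N$ (hence $\delta$, $\varepsilon_N$) at the outset so that all accumulated errors are $\le\varepsilon$.

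The main obstacle I expect is Step (4): getting the estimate uniformly for \emph{all} pairs $s_1\le s_2$, not just grid points, while keeping the exceptional probability under control. Two sub-issues: first, the backward semigroup over $[s_1,s_2]$ must be compared to the composition of the grid-aligned semigroups, and the reflected-BSDE comparison (Lemma \ref{l8}) only gives monotonicity, so I need the terminal data to be ordered up to an $O(\sqrt\delta)$ deterministic shift, which requires the $1/2$-Hölder regularity of $W_j$ in time from Proposition \ref{pro} and the estimate $\mathbb E[\sup|X_{S+s}-X_{S+s'}|^2\,|\,\mathcal F_t]\le C\tau$ from Lemma \ref{le2}; second, the conditioning is on $\mathcal F_t$ throughout, so each DPP application must be made $\mathcal F_t$-conditionally and the selected controls kept independent of $\mathcal F_t$, which is exactly why Proposition \ref{p2} insists on that independence — this is the technical heart and mirrors the corresponding step in Lin \cite{L2011}, now complicated, as the author notes in the introduction, by the presence of the obstacle terms $h_j$ and the increasing processes $^{j}K$. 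A secondary obstacle is bookkeeping the error propagation: since $^{j}G$ is only Lipschitz-$1$ (not a contraction) in its terminal datum, the naive bound multiplies errors by constants at each of the $N$ steps; I would handle this by making $\varepsilon_N$ exponentially small in $N$ (e.g. $\varepsilon_N=\varepsilon/(N\,C^N)$ for the relevant Lipschitz constant $C$), which is harmless since $N$ is fixed once $\varepsilon$ is.
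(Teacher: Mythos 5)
Your proposal is correct and follows essentially the same route as the paper: a per-interval construction from the dynamic programming principle glued over a finite partition (the paper's Lemma \ref{le6}), chaining via the semigroup and comparison properties with linearly accumulating errors $\varepsilon_0=\varepsilon/(2Cn)$ (your exponential safety margin $\varepsilon/(NC^N)$ is unnecessary but harmless, since each constant shift of the terminal datum costs only an additive $C\varepsilon_0$ by Lemma \ref{l18}), then absorption of the off-grid mismatch at general $s_1,s_2$ into $O(\tau^{1/2})$ via Proposition \ref{pro}, Lemma \ref{le2} and the reflected-BSDE stability estimates, finished by a Chebyshev bound on $\mathbb{E}[|I_1-I_2|^2]$ rather than a union bound. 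The one sub-step you gloss over is how a single control pair, independent of $\mathcal{F}_t$, can be chosen near-optimal from the \emph{random} position $X^{t,x;u^{\varepsilon},v^{\varepsilon}}_{t_i}$: the paper partitions $\mathbb{R}^n$ into cells $O_l$ of small diameter, picks $(u^{y_l},v^{y_l})$ for a representative $y_l\in O_l$, sets $u^{\varepsilon}=\sum_{l}1_{O_l}(X_{t_i})u^{y_l}$, and controls the resulting error through the $|y-z|^{1/2}$-continuity of $z\mapsto{}^{j}G^{t_i,z;u,v}_{t_i,t_{i+1}}[W_j(t_{i+1},\cdot)]$, the square root being forced by the obstacle term in Lemma \ref{l18}.
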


Let us first give some preliminary result. Since the   proof of the
following lemma is similar to that in \cite{L2011}, we omit here.
\begin{lemma}
For all $\varepsilon>0,$  all $\delta\in[0,T-t]$ and
$x\in\mathbb{R}^{n}$, there exists
$(u^{\varepsilon},v^{\varepsilon})\in\mathcal{U}_{t,T}\times\mathcal{V}_{t,T}$
independent of $\mathcal {F}_{t}$, such that, $j=1,2,$
\begin{eqnarray*}\label{}
 W_{j}(t,x)-\varepsilon\leq
 \ ^{j}G^{t,x;u^{\varepsilon},v^{\varepsilon}}_{t,t+\delta}
 [W_{j}(t+\delta,X^{t,x;u^{\varepsilon},v^{\varepsilon}}_{t+\delta})], \ \mathbb{P}- a.s.
\end{eqnarray*}
\end{lemma}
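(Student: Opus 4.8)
The plan is to treat the two zero-sum games separately: extract from the dynamic programming principle an $\varepsilon$-optimal strategy for the maximizing player in each game, and then fuse the two strategies into a single admissible control pair by means of the fixed-point Lemma \ref{l2}. (For $\delta=0$ there is nothing to prove, so assume $\delta>0$.)

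First I would apply the dynamic programming principle for $W_{1}$ on the interval $[t,t+\delta]$. Since $W_{1}(t,x)$ is a deterministic number, Proposition \ref{p1} provides an NAD strategy $\alpha^{\varepsilon}\in\mathcal{A}_{t,t+\delta}$ with
\[
\essinf_{\beta\in\mathcal{B}_{t,t+\delta}}\ ^{1}G^{t,x;\alpha^{\varepsilon},\beta}_{t,t+\delta}\big[W_{1}(t+\delta,X^{t,x;\alpha^{\varepsilon},\beta}_{t+\delta})\big]\ \geq\ W_{1}(t,x)-\varepsilon,\qquad \mathbb{P}\text{-a.s.,}
\]
hence, by the very definition of the essential infimum, $\ ^{1}G^{t,x;\alpha^{\varepsilon},\beta}_{t,t+\delta}[W_{1}(t+\delta,X^{t,x;\alpha^{\varepsilon},\beta}_{t+\delta})]\geq W_{1}(t,x)-\varepsilon$ $\mathbb{P}$-a.s.\ for \emph{every} fixed $\beta\in\mathcal{B}_{t,t+\delta}$. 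Since the $W_{2}$ of this section coincides with the $\overline{W}_{2}$ of Section 4, the symmetric dynamic programming principle (Proposition \ref{pp1}) likewise provides an NAD strategy $\beta^{\varepsilon}\in\mathcal{B}_{t,t+\delta}$ with $\ ^{2}G^{t,x;\alpha,\beta^{\varepsilon}}_{t,t+\delta}[W_{2}(t+\delta,X^{t,x;\alpha,\beta^{\varepsilon}}_{t+\delta})]\geq W_{2}(t,x)-\varepsilon$ $\mathbb{P}$-a.s.\ for every fixed $\alpha\in\mathcal{A}_{t,t+\delta}$.

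Next I would apply Lemma \ref{l2} on $[t,t+\delta]$ to obtain the unique pair $(u^{\varepsilon},v^{\varepsilon})$ with $\alpha^{\varepsilon}(v^{\varepsilon})=u^{\varepsilon}$ and $\beta^{\varepsilon}(u^{\varepsilon})=v^{\varepsilon}$ (extended to $[t,T]$ by, say, constant controls), and plug $\beta=\beta^{\varepsilon}$ into the first inequality and $\alpha=\alpha^{\varepsilon}$ into the second. By the convention $\ ^{j}G^{t,x;\alpha,\beta}_{t,t+\delta}=\ ^{j}G^{t,x;u,v}_{t,t+\delta}$ at this fixed point, this yields, $\mathbb{P}$-a.s.,
\[
\begin{aligned}
&\ ^{1}G^{t,x;u^{\varepsilon},v^{\varepsilon}}_{t,t+\delta}\big[W_{1}(t+\delta,X^{t,x;u^{\varepsilon},v^{\varepsilon}}_{t+\delta})\big]\geq W_{1}(t,x)-\varepsilon,\\
&\ ^{2}G^{t,x;u^{\varepsilon},v^{\varepsilon}}_{t,t+\delta}\big[W_{2}(t+\delta,X^{t,x;u^{\varepsilon},v^{\varepsilon}}_{t+\delta})\big]\geq W_{2}(t,x)-\varepsilon,
\end{aligned}
\]
which is exactly the asserted estimate for $j=1,2$.

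The hard part will be arranging that $\alpha^{\varepsilon}$ and $\beta^{\varepsilon}$ — and therefore $(u^{\varepsilon},v^{\varepsilon})$ — are independent of $\mathcal{F}_{t}$, since a priori the $\esssup$ delivered by the dynamic programming principle could carry $\mathcal{F}_{t}$-information. I would handle this exactly as in Lin \cite{L2011}: because $W_{1}(t,x)$ is deterministic and $x$ is a fixed point, the family $\{\essinf_{\beta}\ ^{1}G^{t,x;\alpha,\beta}_{t,t+\delta}[W_{1}(t+\delta,X^{t,x;\alpha,\beta}_{t+\delta})]\}_{\alpha}$ is upward directed and admits a nondecreasing maximizing sequence of strategies that can be realized using only the increments of $B$ after time $t$ (a shift argument); pasting those strategies along the level sets $\{\,\cdot\ \geq W_{1}(t,x)-\varepsilon\,\}$ of that sequence produces an admissible NAD strategy $\alpha^{\varepsilon}$ independent of $\mathcal{F}_{t}$ with the required uniform bound, and the same construction gives $\beta^{\varepsilon}$. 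Finally, the fixed-point construction in Lemma \ref{l2} uses only the two strategies, so it preserves $\mathcal{F}_{t}$-independence of $(u^{\varepsilon},v^{\varepsilon})$; this $\mathcal{F}_{t}$-independence is precisely what is needed afterwards to iterate the estimate and deduce Proposition \ref{p2}.
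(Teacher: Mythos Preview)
Your proposal is correct and is essentially the approach the paper has in mind: the paper omits the proof and simply refers to Lin \cite{L2011}, and your outline --- extract an $\varepsilon$-optimal NAD strategy for each player from the respective dynamic programming principles (Proposition \ref{p1} for $W_{1}$, Proposition \ref{pp1} for $W_{2}=\overline{W}_{2}$), combine them via the fixed-point Lemma \ref{l2}, and secure $\mathcal{F}_{t}$-independence by the shift argument of \cite{L2011} --- is precisely that route. The only cosmetic point is that, since $W_{j}(t,x)$ is deterministic, one does not really need to paste along level sets: the esssup can from the outset be restricted to strategies measurable with respect to $\sigma(B_{s}-B_{t},\,s\ge t)$, so a single $\mathcal{F}_{t}$-independent $\alpha^{\varepsilon}$ (resp.\ $\beta^{\varepsilon}$) already does the job.
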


Let us establish the following Lemma.
\begin{lemma}\label{le6}
Let $n\geq 1$ and let us fix some partition
$t=t_{0}<t_{1}<\cdots<t_{n}=T$ of the interval $[t,T]$. Then, for
all $\varepsilon>0,$  there exists
$(u^{\varepsilon},v^{\varepsilon})\in\mathcal{U}_{t,T}\times\mathcal{V}_{t,T}$
independent of $\mathcal {F}_{t}$, such that, for all
$i=0,\cdots,n-1$,
\begin{eqnarray*}\label{}
 W_{j}(t_{i},X^{t,x;u^{\varepsilon},v^{\varepsilon}}_{t_{i}})-\varepsilon\leq
\ ^{j}G^{t,x;u^{\varepsilon},v^{\varepsilon}}_{t_{i},t_{i+1}}
 [W_{j}(t_{i+1},X^{t,x;u^{\varepsilon},v^{\varepsilon}}_{t_{i+1}})],\
 \mathbb{P}- a.s.
\end{eqnarray*}
\end{lemma}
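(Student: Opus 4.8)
\textbf{Proof proposal for Lemma \ref{le6}.}

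The plan is to prove this by a finite (backward or forward) induction on the partition points, at each step invoking the previous lemma (the one-step version of the dynamic programming inequality, which produces admissible controls independent of $\mathcal{F}_t$), and then pasting the pieces together. The subtle point, as always with such ``stability/concatenation'' arguments for stochastic differential games, is that the one-step lemma only gives, for a \emph{fixed} deterministic initial datum $(t_i,y)$, a pair of controls on $[t_i,T]$ (restricted to $[t_i,t_{i+1}]$) that realizes $W_j(t_i,y)-\varepsilon\le\ ^jG^{t_i,y;u,v}_{t_i,t_{i+1}}[W_j(t_{i+1},X^{t_i,y;u,v}_{t_{i+1}})]$; but in our situation the point at which we enter the $i$-th subinterval is the random variable $X^{t,x;u^\varepsilon,v^\varepsilon}_{t_i}$, not a deterministic point. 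Handling this measurably is the main obstacle.

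First I would treat the last subinterval $[t_{n-1},t_n]$: apply the preceding lemma with $\delta=t_n-t_{n-1}$ to obtain controls on that interval. More systematically, I would argue by backward induction: suppose controls $(u^\varepsilon,v^\varepsilon)$ on $[t_{i+1},T]$ (independent of $\mathcal{F}_{t_{i+1}}$, hence of $\mathcal{F}_t$) have been constructed so that the desired inequality holds on each of $[t_{i+1},t_{i+2}],\dots,[t_{n-1},t_n]$. To extend to $[t_i,t_{i+1}]$, I would use a measurable-selection / pasting argument: by the previous lemma, for each deterministic $y\in\mathbb{R}^n$ there are controls on $[t_i,t_{i+1}]$ realizing the one-step inequality at $(t_i,y)$ with accuracy $\varepsilon/n$ (or $\varepsilon$, adjusting constants); one shows these can be chosen measurably in $y$, and then plugs in $y=X^{t,x;u^\varepsilon,v^\varepsilon}_{t_i}$, which is $\mathcal{F}_{t_i}$-measurable and — by the flow property and the independence of the later controls from $\mathcal{F}_t$ — gives a well-defined pair of admissible controls on $[t_i,t_{i+1}]$ that, concatenated with the already-constructed controls on $[t_{i+1},T]$, is again independent of $\mathcal{F}_t$. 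The backward-semigroup property $^jG^{t,x;u,v}_{t_i,t_{i+1}}[\cdot]$ together with the flow property $X^{t,x;u,v}_s=X^{t_i,X^{t,x;u,v}_{t_i};u,v}_s$ for $s\ge t_i$ ensures that the inequality obtained for the shifted problem transfers verbatim to the original one, because the $G$-semigroup at a fixed time only sees the controls on that subinterval and the terminal datum.

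The main obstacle, then, is the measurable-selection step: making the choice of $(u,v)$ on $[t_i,t_{i+1}]$ depend measurably on the random entry point $X^{t,x;u^\varepsilon,v^\varepsilon}_{t_i}$ while keeping admissibility and the independence-of-$\mathcal{F}_t$ property. I expect this to be handled exactly as in Lin \cite{L2011}: one can use the Lipschitz/Hölder continuity of $W_j$ in $x$ (Proposition \ref{pro}) and the standard estimates of Lemma \ref{l4} and Lemma \ref{l18} (or Proposition \ref{p3}) to discretize the state space, pick controls on a countable dense set, and patch them on a measurable partition of $\Omega$ induced by the values of $X^{t,x;u^\varepsilon,v^\varepsilon}_{t_i}$, paying a further $O(\varepsilon)$ error that is absorbed by choosing the mesh of the state discretization fine enough; the presence of the reflecting barrier contributes only to the constants in these estimates (via Lemma \ref{l1}, Lemma \ref{l18}) and not to the structure of the argument. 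Since this is precisely the scheme of \cite{L2011}, I would, as the paper does, record the statement and refer to \cite{L2011} for the routine details.
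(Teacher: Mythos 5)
Your proposal follows essentially the same route as the paper: apply the one-step lemma at fixed deterministic points, use the Lipschitz continuity of $W_{j}$ (Proposition \ref{pro}) together with the $L^{2}$-stability of the reflected backward semigroup (Lemmas \ref{l18} and \ref{l4}) to show the one-step inequality survives a small perturbation of the initial point, then take a countable Borel partition $\{O_{l}\}$ of $\mathbb{R}^{n}$ of small diameter and patch the controls as $\sum_{l}1_{O_{l}}(X^{t,x;u^{\varepsilon},v^{\varepsilon}}_{t_{i}})u^{y_{l}}$. The only cosmetic differences are that the paper runs the induction forward in $i$ (which is the natural order here, since the inductive hypothesis for the backward version would reference the not-yet-constructed trajectory up to $t_{i+1}$) and writes out the key estimate $|{}^{j}G^{t_{i},y}_{t_{i},t_{i+1}}[\cdot]-{}^{j}G^{t_{i},z}_{t_{i},t_{i+1}}[\cdot]|^{2}\leq C|y-z|$ explicitly rather than citing \cite{L2011}.
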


\begin{proof}
Let us prove this lemma by induction. By the above lemma, it is
obvious for $i=0$. We now suppose that
$(u^{\varepsilon},v^{\varepsilon})$ independent of $\mathcal
{F}_{t}$, is constructed on the interval $[t,t_{i})$ and we shall
define  it on $[t_{i},t_{i+1})$. From the above lemma it follows
that, for all  $y\in \mathbb{R}^{n}$, there exists
$(u^{y},v^{y})\in\mathcal{U}_{t_{i},T}\times\mathcal{V}_{t_{i},T}$
independent of $\mathcal {F}_{t}$, such that,
\begin{eqnarray}\label{eq10}
 W_{j}(t_{i},y)-\frac{\varepsilon}{2}\leq
\ ^{j}G^{t_{i},y;u^{y},v^{y}}_{t_{i},t_{i+1}}
 [W_{j}(t_{i+1},X^{t,y;u^{y},v^{y}}_{t_{i+1}})], \ \mathbb{P}- a.s,
 j=1,2.
\end{eqnarray}
For arbitrarily $j=1,2,$   for all $y, z\in \mathbb{R}^{n}$ and
$s\in[t_{i},t_{i+1}]$, we set
$$y^{1}_{s}=\ ^{j}G^{t_{i},y;u^{y},v^{y}}_{s,t_{i+1}}
 [W_{j}(t_{i+1},X^{t_{i},y;u^{y},v^{y}}_{t_{i+1}})], \ \text{and}\ \
 y^{2}_{s}=\ ^{j}G^{t_{i},z;u^{y},v^{y}}_{s,t_{i+1}}
 [W_{j}(t_{i+1},X^{t_{i},z;u^{y},v^{y}}_{t_{i+1}})].$$
Then let us consider the following  BSDEs:
\begin{eqnarray*}\label{}
 \left \{\begin{array}{rl}
    &y^{1}_{s}= W_{j}(t_{i+1},X^{t_{i},y;u^{y},v^{y}}_{t_{i+1}})
+\displaystyle \int_{s}^{t_{i+1}} f_{j}(r,X^{t_{i},y;u^{y},v^{y}}_r,
y_r^{1},z^{1}_r,u^{y}_r, v^{y}_r)dr \\& \hskip2cm  +\
^{1}K_{t_{i+1}}- \ ^{1}K_s-\displaystyle
\int_{s}^{t_{i+1}} z^{1}_rdB_r,  \\
      & y^{1}_{s} \geq  h_{j}(s,X^{t_{i},y;u^{y},v^{y}}_s),  \qquad s\in [t_{i},t_{i+1}],\\
     &^{1}K_{t_{i}}=0,\quad  \displaystyle\int_{t_{i}}^{{t_{i+1}}}
     (y^{1}_{s} - h_{j}(s,X^{t_{i},y;u^{y},v^{y}}_s))d\ ^{1}K_{r}=0,
     \end{array}\right.
 \end{eqnarray*}
and
\begin{eqnarray*}\label{}
 \left \{\begin{array}{rl}
    &y^{2}_{s}= W_{j}(t_{i+1},X^{t_{i},z;u^{y},v^{y}}_{t_{i+1}})
+\displaystyle \int_{s}^{t_{i+1}} f_{j}(r,X^{t_{i},z;u^{y},v^{y}}_r,
y_r^{2},z^{2}_r,u^{y}_r, v^{y}_r)dr \\& \hskip2cm  +\
^{2}K_{t_{i+1}}- \ ^{2}K_s-\displaystyle
\int_{s}^{t_{i+1}} z^{2}_rdB_r,  \\
      & y^{2}_{s} \geq  h_{j}(s,X^{t_{i},z;u^{y},v^{y}}_s),  \qquad s\in [t_{i},t_{i+1}],\\
     &^{2}K_{t_{i}}=0,\quad  \displaystyle\int_{t_{i}}^{{t_{i+1}}}
     (y^{2}_{s} - h_{j}(s,X^{t_{i},z;u^{y},v^{y}}_s))d\ ^{2}K_{s}=0.
     \end{array}\right.
 \end{eqnarray*}
From the Lemmas \ref{l18} and \ref{l4} it follows that
\begin{eqnarray*}\label{}
&&|^{j}G^{t_{i},y;u^{y},v^{y}}_{t_{i},t_{i+1}}
 [W_{j}(t_{i+1},X^{t,y;u^{y},v^{y}}_{t_{i+1}})]
-\ ^{j}G^{t_{i},z;u^{y},v^{y}}_{t_{i},t_{i+1}}
 [W_{j}(t_{i+1},X^{t_{i},z;u^{y},v^{y}}_{t_{i+1}})]|^{2}\\
&\leq&C\mathbb{E}[|W_{j}(t_{i+1},X^{t_{i},y;u^{y},v^{y}}_{t_{i+1}})
-W_{j}(t_{i+1},X^{t_{i},z;u^{y},v^{y}}_{t_{i+1}})|^{2}
\Big|\mathcal {F}_{t_{i}}]\\
&&+C\mathbb{E}[|\displaystyle \int_{t_{i}}^{t_{i+1}}
f_{j}(r,X^{t_{i},y;u^{y},v^{y}}_r, y_r^{1},z^{1}_r,u^{y}_r,
v^{y}_r)dr -\displaystyle \int_{t_{i}}^{t_{i+1}}
f_{j}(r,X^{t_{i},z;u^{y},v^{y}}_r, y_r^{1},z^{1}_r,u^{y}_r,
v^{y}_r)dr|^{2}\Big|\mathcal {F}_{t_{i}}]\\
&&+C\mathbb{E}[\sup\limits_{t_{i}\leq s\leq t_{i+1}}|
h_{j}(s,X^{t_{i},y;u^{y},v^{y}}_s) -
h_{j}(s,X^{t_{i},z;u^{y},v^{y}}_s)|^{2}
\Big|\mathcal {F}_{t_{i}}]^{\frac{1}{2}}\\
&\leq&C\mathbb{E}[|X^{t_{i},y;u^{y},v^{y}}_{t_{i+1}}
-X^{t_{i},z;u^{y},v^{y}}_{t_{i+1}}|^{2} \Big|\mathcal
{F}_{t_{i}}]+C\mathbb{E}[\displaystyle \int_{t_{i}}^{t_{i+1}}|
X^{t_{i},y;u^{y},v^{y}}_r-X^{t_{i},z;u^{y},v^{y}}_r |^{2}dr
\Big|\mathcal {F}_{t_{i}}]\\
&&+C\mathbb{E}[\sup\limits_{t_{i}\leq s\leq t_{i+1}}|
X^{t_{i},y;u^{y},v^{y}}_s - X^{t_{i},z;u^{y},v^{y}}_s|^{2}
\Big|\mathcal {F}_{t_{i}}]^{\frac{1}{2}}\\
 &\leq & C|y-z|.
\end{eqnarray*}
Combining  the above inequality, Proposition \ref{pro} and
(\ref{eq10}) we see that
\begin{eqnarray*}\label{}
 W_{j}(t_{i},z)-\varepsilon&\leq&
 W_{j}(t_{i},y)-\varepsilon+C|y-z|^{\frac{1}{2}}\\
 &\leq& \ ^{j}G^{t_{i},y;u^{y},v^{y}}_{t_{i},t_{i+1}}
 [W_{j}(t_{i+1},X^{t,y;u^{y},v^{y}}_{t_{i+1}})]-\frac{\varepsilon}{2}+C|y-z|^{\frac{1}{2}}\\
&\leq& \ ^{j}G^{t,z;u^{y},v^{y}}_{t_{i},t_{i+1}}
 [W_{j}(t_{i+1},X^{t,z;u^{y},v^{y}}_{t_{i+1}})]-\frac{\varepsilon}{2}+C|y-z|^{\frac{1}{2}}\\
 &\leq& \ ^{j}G^{t,z;u^{y},v^{y}}_{t_{i},t_{i+1}}
 [W_{j}(t_{i+1},X^{t,z;u^{y},v^{y}}_{t_{i+1}})], \ \mathbb{P}- a.s.,
\end{eqnarray*}
for $C|y-z|^{\frac{1}{2}}\leq \dfrac{\varepsilon}{2}$.\\

 Let $\{O_{i}\}_{i\geq1}\subset \mathcal {B}(\mathbb{R}^{n})$ be a partition of
$\mathbb{R}^{n}$ with $ diam(O_{i})<\dfrac{\varepsilon}{2C}$ and let
$y_{l}\in O_{l}$. Then, for $z\in O_{l}$,
\begin{eqnarray}\label{e6}
 W_{j}(t_{i},z)-\varepsilon\leq
\ ^{j}G^{t,z;u^{y_{l}},v^{y_{l}}}_{t_{i},t_{i+1}}
 [W_{j}(t_{i+1},X^{t,z;u^{y_{l}},v^{y_{l}}}_{t_{i+1}})], \ \mathbb{P}- a.s.
\end{eqnarray}
Let us put
\begin{eqnarray*}\label{}
u^{\varepsilon}=\sum\limits_{l\geq1}1_{O_{l}}(X^{t,x;u^{\varepsilon},v^{\varepsilon}})u^{y_{l}},\
v^{\varepsilon}=\sum\limits_{l\geq1}1_{O_{l}}(X^{t,x;u^{\varepsilon},v^{\varepsilon}})v^{y_{l}}.
\end{eqnarray*}
Then
\begin{eqnarray*}\label{}
&& ^{j}G^{t,x;u^{\varepsilon},v^{\varepsilon}}_{t_{i},t_{i+1}}
 [W_{j}(t_{i+1},X^{t,x;u^{\varepsilon},v^{\varepsilon}}_{t_{i+1}})]\\
  &=& ^{j}G^{t_{i},X^{t,x;u^{\varepsilon},v^{\varepsilon}}_{t_{i}};u^{\varepsilon},v^{\varepsilon}}_{t_{i},t_{i+1}}
 [\sum\limits_{l\geq1}W_{j}(t_{i+1},X^{t_{i},X^{t,x;u^{\varepsilon},v^{\varepsilon}}_{t_{i}};
 u^{\varepsilon},v^{\varepsilon}}_{t_{i+1}})1_{O_{l}}(X^{t,x;u^{\varepsilon},v^{\varepsilon}}_{t_{i}})]\\
   &=& ^{j}G^{t_{i},X^{t,x;u^{\varepsilon},v^{\varepsilon}}_{t_{i}};u^{\varepsilon},v^{\varepsilon}}_{t_{i},t_{i+1}}
 [\sum\limits_{l\geq1}W_{j}(t_{i+1},X^{t_{i},X^{t,x;u^{\varepsilon},v^{\varepsilon}}_{t_{i}};
 u^{y_{l}},v^{y_{l}}}_{t_{i+1}})1_{O_{l}}(X^{t,x;u^{\varepsilon},v^{\varepsilon}}_{t_{i}})]\\
    &=& \sum\limits_{l\geq1}\ ^{j}G^{t_{i},X^{t,x;u^{\varepsilon},v^{\varepsilon}}_{t_{i}};
    u^{y_{l}},v^{y_{l}}}_{t_{i},t_{i+1}}
 [W_{j}(t_{i+1},X^{t_{i},X^{t,x;u^{\varepsilon},v^{\varepsilon}}_{t_{i}};
 u^{y_{l}},v^{y_{l}}}_{t_{i+1}})]1_{O_{l}}(X^{t,x;u^{\varepsilon},v^{\varepsilon}}_{t_{i}}),
\end{eqnarray*}
which together with  (\ref{e6}) yields
\begin{eqnarray*}\label{}
&& ^{j}G^{t,x;u^{\varepsilon},v^{\varepsilon}}_{t_{i},t_{i+1}}
 [W_{j}(t_{i+1},X^{t,x;u^{\varepsilon},v^{\varepsilon}}_{t_{i+1}})]
 \geq  \sum\limits_{l\geq1}
[W_{j}(t_{i},X^{t,x;u^{y_{l}},v^{y_{l}}}_{t_{i}})-\varepsilon]
1_{O_{l}}(X^{t,x;u^{\varepsilon},v^{\varepsilon}}_{t_{i}})\\
&=& \sum\limits_{l\geq1}
W_{j}(t_{i},X^{t,x;u^{y_{l}},v^{y_{l}}}_{t_{i}})
1_{O_{l}}(X^{t,x;u^{\varepsilon},v^{\varepsilon}}_{t_{i}})-\varepsilon
  =   W_{j}(t_{i},X^{t,x;u^{\varepsilon},v^{\varepsilon}}_{t_{i}})-\varepsilon,
\end{eqnarray*}
from which we get the desired result.
\end{proof}\vskip2mm

\noindent  Let us come to the proof of Proposition \ref{p2}.
\begin{proof}
Let $t=t_{0}<t_{1}<\cdots<t_{n}=T$ be a partition of  $[t,T]$, and
  $\tau=\sup\limits_{i}(t_{i+1}-t_{i})$.  By Proposition \ref{pro} and Lemma
\ref{le2} we see that, for all $j=1,2$, $0\leq k\leq n$,
$s\in[t_{k},t_{k+1})$ and $(u,v)\in\mathcal{U}_{t
,T}\times\mathcal{V}_{t,T}$,
\begin{eqnarray}\label{e8}
&&\mathbb{E}[|W_{j}(t_{k},X_{t_{k}}^{t,x;u,v})
-W_{j}(s,X^{t,x;u,v}_{s})|^{2}]\nonumber\\
 &\leq&2\mathbb{E}[|W_{j}(t_{k},X_{t_{k}}^{t,x;u,v})
-W_{j}(s,X^{t,x;u,v}_{t_{k}})|^{2}]\nonumber\\&&+2\mathbb{E}[|W_{j}(s,X_{t_{k}}^{t,x;u,v})
-W_{j}(s,X^{t,x;u,v}_{s})|^{2}]\nonumber\\
&\leq&C|s-t_{k}| (1+
\mathbb{E}[|X_{t_{k}}^{t,x;u,v}|^{2}])+C\mathbb{E}[|X_{t_{k}}^{t,x;u,v}
-X^{t,x;u,v}_{s}|^{2}]\nonumber\\
 &\leq& C\tau.
\end{eqnarray}
Here and after $C$ represents  a generic constant which may be
different at different places.

We  let $(u^{\varepsilon},v^{\varepsilon})\in\mathcal{U}_{t
,T}\times\mathcal{V}_{t,T}$ be defined as in  Lemma \ref{le6} for
$\varepsilon=\varepsilon_{0}$, where $\varepsilon_{0}>0$ will be
specified  later. Then,  for all $i, 0\leq i \leq n$,
\begin{eqnarray*}\label{}
 W_{j}(t_{i},X^{t,x;u^{\varepsilon},v^{\varepsilon}}_{t_{i}})-\varepsilon_{0}\leq
 \ ^{j}G^{t,x;u^{\varepsilon},v^{\varepsilon}}_{t_{i},t_{i+1}}
 [W_{j}(t_{i+1},X^{t,x;u^{\varepsilon},v^{\varepsilon}}_{t_{i+1}})],\
 \mathbb{P}- a.s.
\end{eqnarray*}
For  $t\leq s_{1}\leq s_{2}\leq T$, let us  suppose, without loss of
generality, that $t_{i-1}\leq s_{1} < t_{i}$ and $t_{k}< s_{2} \leq
t_{k+1}$, for some $1\leq i<k\leq n-1$. Therefore, applying the
Lemmas \ref{l18} and \ref{l8} we deduce that
\begin{eqnarray*}\label{}
&& ^{j}G^{t,x;u^{\varepsilon},v^{\varepsilon}}_{t_{i},t_{k+1}}
 [W_{j}(t_{k+1},X^{t,x;u^{\varepsilon},v^{\varepsilon}}_{t_{k+1}})]\\
 &=& ^{j}G^{t,x;u^{\varepsilon},v^{\varepsilon}}_{t_{i},t_{k}}[^{j}G^{t,x;u^{\varepsilon},v^{\varepsilon}}_{t_{k},t_{k+1}}
 [W_{j}(t_{k+1},X^{t,x;u^{\varepsilon},v^{\varepsilon}}_{t_{k+1}})]]\\
  &\geq& ^{j}G^{t,x;u^{\varepsilon},v^{\varepsilon}}_{t_{i},t_{k}}
  [W_{j}(t_{k},X^{t,x;u^{\varepsilon},v^{\varepsilon}}_{t_{k}})-\varepsilon_{0}]\\
   &\geq& ^{j}G^{t,x;u^{\varepsilon},v^{\varepsilon}}_{t_{i},t_{k}}
  [W_{j}(t_{k},X^{t,x;u^{\varepsilon},v^{\varepsilon}}_{t_{k}})]-C\varepsilon_{0}\\
   &\geq&\cdots\geq \ ^{j}G^{t,x;u^{\varepsilon},v^{\varepsilon}}_{t_{i},t_{i+1}}
  [W_{j}(t_{i+1},X^{t,x;u^{\varepsilon},v^{\varepsilon}}_{t_{i+1}})]-C(k-i)\varepsilon_{0}\\
 &\geq&
 W_{j}(t_{i},X^{t,x;u^{\varepsilon},v^{\varepsilon}}_{t_{i}})-C(k-i+1)\varepsilon_{0},
\end{eqnarray*}
from which we see that
\begin{eqnarray*}\label{}
&& ^{j}G^{t,x;u^{\varepsilon},v^{\varepsilon}}_{s_{1},t_{k+1}}
 [W_{j}(t_{k+1},X^{t,x;u^{\varepsilon},v^{\varepsilon}}_{t_{k+1}})]\\
 &=& ^{j}G^{t,x;u^{\varepsilon},v^{\varepsilon}}_{s_{1},t_{i}}
 [\ ^{j}G^{t,x;u^{\varepsilon},v^{\varepsilon}}_{t_{i},t_{k+1}}
 [W_{j}(t_{k+1},X^{t,x;u^{\varepsilon},v^{\varepsilon}}_{t_{k+1}})]]\\
 &\geq& ^{j}G^{t,x;u^{\varepsilon},v^{\varepsilon}}_{s_{1},t_{i}}
  [W_{j}(t_{i},X^{t,x;u^{\varepsilon},v^{\varepsilon}}_{t_{i}})-C(k-i+1)\varepsilon_{0}]\\
&\geq& ^{j}G^{t,x;u^{\varepsilon},v^{\varepsilon}}_{s_{1},t_{i}}
  [W_{j}(t_{i},X^{t,x;u^{\varepsilon},v^{\varepsilon}}_{t_{i}})]-C(k-i+2)\varepsilon_{0}\\
  &\geq& ^{j}G^{t,x;u^{\varepsilon},v^{\varepsilon}}_{s_{1},t_{i}}
  [W_{j}(t_{i},X^{t,x;u^{\varepsilon},v^{\varepsilon}}_{t_{i}})]-\frac{\varepsilon}{2},
\end{eqnarray*}
where we put $\varepsilon_{0}=\dfrac{\varepsilon}{2Cn}$. We set
\begin{eqnarray}\label{eq110}
 I_{1}&=& ^{j}G^{t,x;u^{\varepsilon},v^{\varepsilon}}_{s_{1},t_{k+1}}
 [W_{j}(t_{k+1},X^{t,x;u^{\varepsilon},v^{\varepsilon}}_{t_{k+1}})]-\
   ^{j}G^{t,x;u^{\varepsilon},v^{\varepsilon}}_{s_{1},t_{i}}
  [W_{j}(t_{i},X^{t,x;u^{\varepsilon},v^{\varepsilon}}_{t_{i}})]+\frac{\varepsilon}{2}\geq0,\nonumber\\
 I_{2}&=&  ^{j}G^{t,x;u^{\varepsilon},v^{\varepsilon}}_{s_{1},s_{2}}
 [W_{j}(s_{2},X^{t,x;u^{\varepsilon},v^{\varepsilon}}_{s_{2}})]
 -W_{j}(s_{1},X^{t,x;u^{\varepsilon},v^{\varepsilon}}_{s_{1}})+\frac{\varepsilon}{2}.
\end{eqnarray}
In what follows we shall prove the following:
\begin{eqnarray*}\label{}
\mathbb{E}[| I_{1} - I_{2}|^{2}] \leq C\tau.
\end{eqnarray*}
Let us put
$$y_{s}=\
^{j}G^{t,x;u^{\varepsilon},v^{\varepsilon}}_{s,t_{i}}
  [W_{j}(t_{i},X^{t,x;u^{\varepsilon},v^{\varepsilon}}_{t_{i}})], s\in [s_{1},
  t_{i}].$$
 Then we consider  the associated BSDEs:
\begin{eqnarray*}\label{}
 \left \{\begin{array}{rl}
    &y_{s}=W_{j}(t_{i},X^{t,x;u^{\varepsilon},v^{\varepsilon}}_{t_{i}})
+\displaystyle \int_{s}^{t_{i}}
f_{j}(r,X^{t,x;u^{\varepsilon},v^{\varepsilon}}_r,
y_r,z_r,u^{\varepsilon}_r,v^{\varepsilon}_r)dr
+k_{t_{i}}-k_{s}-\displaystyle \int_{s}^{t_{i}}z_rdB_r,  \\
      & y_{s} \geq  h_{j}(s,X^{t,x;u^{\varepsilon},v^{\varepsilon}}_s),  \qquad s\in [s_{1}, t_{i}],\\
     &k_{s_{1}}=0,\quad  \displaystyle\int_{s_{1}}^{{t_{i}}}
     (y_{r} - h_{j}(r,X^{t,x;u^{\varepsilon},v^{\varepsilon}}_r))d k_{r}=0,
     \end{array}\right.
 \end{eqnarray*}
and
\begin{equation*}\label{}
\begin{array}{lll}
y'_{s}=
W_{j}(s_{1},X^{t,x;u^{\varepsilon},v^{\varepsilon}}_{s_{1}}), \ s\in
[s_{1}, t_{i}].
\end{array}
\end{equation*}
Thus,  applying  Lemma \ref{l18} we conclude
\begin{eqnarray*}
&&|^{j}G^{t,x;u^{\varepsilon},v^{\varepsilon}}_{s_{1},t_{i}}
  [W_{j}(t_{i},X^{t,x;u^{\varepsilon},v^{\varepsilon}}_{t_{i}})]
  -W_{j}(s_{1},X^{t,x;u^{\varepsilon},v^{\varepsilon}}_{s_{1}})|^{2}\\
&\leq &
C\mathbb{E}[|W_{j}(t_{i},X^{t,x;u^{\varepsilon},v^{\varepsilon}}_{t_{i}})-
W_{j}(s_{1},X^{t,x;u^{\varepsilon},v^{\varepsilon}}_{s_{1}})
|^2|\mathcal {F}_{s_{1}}] \\
& & + C \mathbb{E}[\int_{s_{1}}^{t_{i}}
|f_{j}(r,X^{t,x;u^{\varepsilon},v^{\varepsilon}}_r,
y_r,z_r,u^{\varepsilon}_r, v^{\varepsilon}_r) |^2|\mathcal
{F}_{s_{1}}]\\
&&+C\mathbb{E}[\sup\limits_{s_{1}\leq s\leq t_{i}}|
h_{j}(s,X^{t,x;u^{\varepsilon},v^{\varepsilon}}_s) -
h_{j}(s_{1},X^{t,x;u^{\varepsilon},v^{\varepsilon}}_{s_{1}})|^{2}dr
\Big|\mathcal {F}_{s_{1}}]^{\frac{1}{2}}\\
&\leq &
C\mathbb{E}[|W_{j}(t_{i},X^{t,x;u^{\varepsilon},v^{\varepsilon}}_{t_{i}})-
W_{j}(s_{1},X^{t,x;u^{\varepsilon},v^{\varepsilon}}_{s_{1}})
|^2|\mathcal {F}_{s_{1}}] \\
& & +  C (t_{i}-s_{1})^{\alpha}+C\mathbb{E}[\sup\limits_{s_{1}\leq
s\leq t_{i}}| X^{t,x;u^{\varepsilon},v^{\varepsilon}}_s -
X^{t,x;u^{\varepsilon},v^{\varepsilon}}_{s_{1}}|^{2} \Big|\mathcal
{F}_{s_{1}}]^{\frac{1}{2}},
\end{eqnarray*}
where we have used the assumptions (H3.3) and (H3.4) and the
boundedness of $f_{j}$. Since
$(u^{\varepsilon},v^{\varepsilon})\in\mathcal{U}_{t,T}\times\mathcal{V}_{t,T}$
is independent of $\mathcal {F}_{t}$ we have
\begin{eqnarray*}
&&\mathbb{E}[|^{j}G^{t,x;u^{\varepsilon},v^{\varepsilon}}_{s_{1},t_{i}}
  [W_{j}(t_{i},X^{t,x;u^{\varepsilon},v^{\varepsilon}}_{t_{i}})]
  -W_{j}(s_{1},X^{t,x;u^{\varepsilon},v^{\varepsilon}}_{s_{1}})|^{2}|\mathcal {F}_{t}]\\
  &\leq &
C\mathbb{E}[|W_{j}(t_{i},X^{t,x;u^{\varepsilon},v^{\varepsilon}}_{t_{i}})-
W_{j}(s_{1},X^{t,x;u^{\varepsilon},v^{\varepsilon}}_{s_{1}}) |^2] +
C (t_{i}-s_{1})\\
&&+C\mathbb{E}[\sup\limits_{s_{1}\leq s\leq t_{i}}|
X^{t,x;u^{\varepsilon},v^{\varepsilon}}_s -
X^{t,x;u^{\varepsilon},v^{\varepsilon}}_{s_{1}}|^{2} \Big|\mathcal
{F}_{t}]^{\frac{1}{2}}.
\end{eqnarray*}
By virtue of   (\ref{e8}) we have
\begin{eqnarray}\label{e10}
\mathbb{E}[|^{j}G^{t,x;u^{\varepsilon},v^{\varepsilon}}_{s_{1},t_{i}}
  [W_{j}(t_{i},X^{t,x;u^{\varepsilon},v^{\varepsilon}}_{t_{i}})]
  -W_{j}(s_{1},X^{t,x;u^{\varepsilon},v^{\varepsilon}}_{s_{1}})|^{2}]\leq C\tau^{\frac{1}{2}}.
\end{eqnarray}
By a similar argument
\begin{eqnarray}\label{e11}
\mathbb{E}[|^{j}G^{t,x;u^{\varepsilon},v^{\varepsilon}}_{s_{2},t_{k+1}}
 [W_{j}(t_{k+1},X^{t,x;u^{\varepsilon},v^{\varepsilon}}_{t_{k+1}})]
-W_{j}(s_{2},X^{t,x;u^{\varepsilon},v^{\varepsilon}}_{s_{2}})|^{2}]\leq
C\tau^{\frac{1}{2}}.
\end{eqnarray}
For $s\in [s_{1},s_{2}]$ we put $$y^{1}_{s}=\
^{j}G^{t,x;u^{\varepsilon},v^{\varepsilon}}_{s,t_{k+1}}
 [W_{j}(t_{k+1},X^{t,x;u^{\varepsilon},v^{\varepsilon}}_{t_{k+1}})]
 =\ ^{j}G^{t,x;u^{\varepsilon},v^{\varepsilon}}_{s,s_{2}}
 [^{j}G^{t,x;u^{\varepsilon},v^{\varepsilon}}_{s_{2},t_{k+1}}
 [W_{j}(t_{k+1},X^{t,x;u^{\varepsilon},v^{\varepsilon}}_{t_{k+1}})]],$$
 and
  $$y^{2}_{s}=\ ^{j}G^{t,x;u^{\varepsilon},v^{\varepsilon}}_{s,s_{2}}
 [W_{j}(s_{2},X^{t,x;u^{\varepsilon},v^{\varepsilon}}_{s_{2}})].$$
Let us consider the associated BSDEs:
\begin{eqnarray*}\label{}
 \left \{\begin{array}{rl}
    &y^{1}_{s}=\
^{j}G^{t,x;u^{\varepsilon},v^{\varepsilon}}_{s_{2},t_{k+1}}
 [W_{j}(t_{k+1},X^{t,x;u^{\varepsilon},v^{\varepsilon}}_{t_{k+1}})]
+\displaystyle \int_{s}^{s_{2}}
f_{j}(r,X^{t,x;u^{\varepsilon},v^{\varepsilon}}_r,
y_r^{1},z^{1}_r,u^{\varepsilon}_r, v^{\varepsilon}_r)dr\\&\hskip25mm
+k^{1}_{s_{2}}-k^{1}_{s}-\displaystyle
\int_{s}^{s_{2}} z^{1}_rdB_r,\\
      & y^{1}_{s} \geq  h_{j}(s,X^{t,x;u^{\varepsilon},v^{\varepsilon}}_s),  \qquad s\in [s_{1},s_{2}],\\
     &k^{1}_{s_{1}}=0,\quad  \displaystyle\int_{s_{1}}^{s_{2}}
     (y_{r} - h_{j}(r,X^{t,x;u^{\varepsilon},v^{\varepsilon}}_r))d k^{1}_{r}=0,
     \end{array}\right.
 \end{eqnarray*}
and
\begin{eqnarray*}\label{}
 \left \{\begin{array}{rl}
    &y^{2}_{s}=W_{j}(s_{2},X^{t,x;u^{\varepsilon},v^{\varepsilon}}_{s_{2}})
+\displaystyle \int_{s}^{s_{2}}
f_{j}(r,X^{t,x;u^{\varepsilon},v^{\varepsilon}}_r,
y_r^{2},z^{2}_r,u^{\varepsilon}_r,
v^{\varepsilon}_r)dr\\&\hskip25mm+k^{2}_{s_{2}}-k^{2}_{s}-\displaystyle
\int_{s}^{s_{2}} z^{2}_rdB_r,\\
      & y^{2}_{s} \geq  h_{j}(s,X^{t,x;u^{\varepsilon},v^{\varepsilon}}_s),  \qquad s\in [s_{1},s_{2}],\\
     &k^{2}_{s_{1}}=0,\quad  \displaystyle\int_{s_{1}}^{s_{2}}
     (y_{r} - h_{j}(r,X^{t,x;u^{\varepsilon},v^{\varepsilon}}_r))d
     k^{2}_{r}=0.
     \end{array}\right.
 \end{eqnarray*}
From the Lemmas \ref{l18} and \ref{l4} it follows that
\begin{eqnarray*}\label{}
&&|^{j}G^{t,x;u^{\varepsilon},v^{\varepsilon}}_{s_{1},t_{k+1}}
 [W_{j}(t_{k+1},X^{t,x;u^{\varepsilon},v^{\varepsilon}}_{t_{k+1}})]
-\ ^{j}G^{t,x;u^{\varepsilon},v^{\varepsilon}}_{s_{1},s_{2}}
 [W_{j}(s_{2},X^{t,x;u^{\varepsilon},v^{\varepsilon}}_{s_{2}})]|^{2}\\
&&\leq
C\mathbb{E}[|^{j}G^{t,x;u^{\varepsilon},v^{\varepsilon}}_{s_{2},t_{k+1}}
 [W_{j}(t_{k+1},X^{t,x;u^{\varepsilon},v^{\varepsilon}}_{t_{k+1}})]
-W_{j}(s_{2},X^{t,x;u^{\varepsilon},v^{\varepsilon}}_{s_{2}})|^{2}
\Big|\mathcal {F}_{s_{1}}].
\end{eqnarray*}
Hence, by (\ref{e11}) we see that
\begin{eqnarray*}\label{}
\mathbb{E}[|^{j}G^{t,x;u^{\varepsilon},v^{\varepsilon}}_{s_{1},t_{k+1}}
 [W_{j}(t_{k+1},X^{t,x;u^{\varepsilon},v^{\varepsilon}}_{t_{k+1}})]
-\ ^{j}G^{t,x;u^{\varepsilon},v^{\varepsilon}}_{s_{1},s_{2}}
 [W_{j}(s_{2},X^{t,x;u^{\varepsilon},v^{\varepsilon}}_{s_{2}})]|^{2}]\leq C\tau^{\frac{1}{2}}.
\end{eqnarray*}
The above inequality and (\ref{e10}) yield
\begin{eqnarray*}\label{}
\mathbb{E}[| I_{1} - I_{2}|^{2}] \leq C\tau^{\frac{1}{2}}.
\end{eqnarray*}
Therefore,
\begin{eqnarray*}\label{}
\mathbb{P}(I_{2}\leq - \frac{\varepsilon}{2}) \leq \mathbb{P}( |
I_{1} - I_{2}|\geq \frac{\varepsilon}{2}) \leq \frac{4\mathbb{E}[|
I_{1} - I_{2}|^{2}]}{\varepsilon^{2}}\leq
\frac{4C\tau^{\frac{1}{2}}}{\varepsilon^{2}}\leq\varepsilon,
\end{eqnarray*}
where we choose $\tau\leq\Big(\dfrac{\varepsilon^{3}}{4C}\Big)^{2}$,
and by (\ref{eq110}) we have
\begin{eqnarray*}\label{}
\mathbb{P}\Big(\
 W_{j}(s_{1},X^{t,x;u^{\varepsilon},v^{\varepsilon}}_{s_{1}})-\varepsilon\leq
\ ^{j}G^{t,x;u^{\varepsilon},v^{\varepsilon}}_{s_{1},s_{2}}
 [W_{j}(s_{2},X^{t,x;u^{\varepsilon},v^{\varepsilon}}_{s_{2}})]\
\Big)\geq 1- \varepsilon.
\end{eqnarray*}
 We also refer to the fact that since
$(u^{\varepsilon},v^{\varepsilon})$ is independent of $\mathcal
{F}_{t}$, the conditional probability $\mathbb{P}(\cdot |\mathcal
{F}_{t})$ of the event
$\Big\{W_{j}(s_{1},X^{t,x;u^{\varepsilon},v^{\varepsilon}}_{s_{1}})-\varepsilon\leq
\ ^{j}G^{t,x;u^{\varepsilon},v^{\varepsilon}}_{s_{1},s_{2}}
 [W_{j}(s_{2},X^{t,x;u^{\varepsilon},v^{\varepsilon}}_{s_{2}})]\Big\}$
coincides with its probability. Indeed, also
$\Big\{W_{j}(s_{1},X^{t,x;u^{\varepsilon},v^{\varepsilon}}_{s_{1}})-\varepsilon\leq
\ ^{j}G^{t,x;u^{\varepsilon},v^{\varepsilon}}_{s_{1},s_{2}}
 [W_{j}(s_{2},X^{t,x;u^{\varepsilon},v^{\varepsilon}}_{s_{2}})]\Big\}$ is independent of $\mathcal {F}_{t}$
 The proof is complete.
\end{proof}

Finally, we  give another main result: the existence theorem of a
Nash equilibrium payoff.
\begin{theorem}\label{t2}
Under  the Isaacs condition  A,  for all
$(t,x)\in[0,T]\times\mathbb{R}^{n}$, there exists a Nash equilibrium
payoff at $(t,x)$.
\end{theorem}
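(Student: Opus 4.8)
The plan is to deduce Theorem~\ref{t2} from Proposition~\ref{p2} together with the characterization Theorem~\ref{t1}, the only genuinely new ingredient being a compactness argument that produces the candidate payoff $(e_1,e_2)$. Note first that under Isaacs condition~A the value functions $W_1,W_2$ (as redefined in this section) satisfy the coincidence relations (\ref{equa}), so the hypotheses of Theorem~\ref{t1}, and hence those of Proposition~\ref{p2}, are fulfilled; since moreover the coefficients $b,\sigma,f_j,\Phi_j,h_j$ are bounded, Lemma~\ref{l1} (equivalently Proposition~\ref{p3}) furnishes a uniform bound $|J_j(t,x;u,v)|\le C$ over all admissible $(u,v)$ and $j=1,2$.

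Fix $(t,x)\in[0,T]\times\mathbb{R}^n$. For each $\varepsilon>0$, Proposition~\ref{p2} produces $(u^\varepsilon,v^\varepsilon)\in\mathcal{U}_{t,T}\times\mathcal{V}_{t,T}$, independent of $\mathcal{F}_t$, such that for all $t\le s_1\le s_2\le T$ and $j=1,2$,
\[
\mathbb{P}\Big(W_j(s_1,X^{t,x;u^\varepsilon,v^\varepsilon}_{s_1})-\varepsilon\le {}^{j}G^{t,x;u^\varepsilon,v^\varepsilon}_{s_1,s_2}\big[W_j(s_2,X^{t,x;u^\varepsilon,v^\varepsilon}_{s_2})\big]\,\Big|\,\mathcal{F}_t\Big)>1-\varepsilon .
\]
I would then specialize to $s_1=s\in[t,T]$ and $s_2=T$: using $W_j(T,\cdot)=\Phi_j$ and the fact that, by the definition of the backward semigroup and uniqueness for the reflected BSDE~(\ref{BSDE}), ${}^{j}G^{t,x;u^\varepsilon,v^\varepsilon}_{s,T}[\Phi_j(X^{t,x;u^\varepsilon,v^\varepsilon}_T)]={}^{j}Y^{t,x;u^\varepsilon,v^\varepsilon}_s$, the displayed estimate is precisely condition~(\ref{eq6}) of Theorem~\ref{t1} for the pair $(u^\varepsilon,v^\varepsilon)$ with parameter~$\varepsilon$.

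By the uniform bound, the points $\big(\mathbb{E}[J_1(t,x;u^\varepsilon,v^\varepsilon)],\mathbb{E}[J_2(t,x;u^\varepsilon,v^\varepsilon)]\big)$ remain in a fixed compact subset of $\mathbb{R}^2$, so I can pick a sequence $\varepsilon_k\downarrow 0$ along which they converge to some $(e_1,e_2)\in\mathbb{R}^2$, which I declare to be the Nash equilibrium payoff. To check Definition~\ref{d2}, given an arbitrary $\varepsilon>0$ choose $k$ so large that $\varepsilon_k\le\varepsilon$ and $|\mathbb{E}[J_j(t,x;u^{\varepsilon_k},v^{\varepsilon_k})]-e_j|\le\varepsilon$ for $j=1,2$, and set $(u^\varepsilon,v^\varepsilon):=(u^{\varepsilon_k},v^{\varepsilon_k})$. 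Since (\ref{eq6}) with a smaller parameter implies (\ref{eq6}) with a larger one, this pair satisfies (\ref{eq6}) with parameter~$\varepsilon$, and it satisfies (\ref{eq7}) by the choice of $k$; Theorem~\ref{t1} then asserts that $(e_1,e_2)$ is a Nash equilibrium payoff at $(t,x)$.

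Essentially all of the work has been pushed into Proposition~\ref{p2} (an approximate dynamic programming estimate along the controlled flow) and Theorem~\ref{t1}. Within the present argument the only delicate points are: the reduction of the probabilistic conclusion of Proposition~\ref{p2} to (\ref{eq6}), which rests on the identification ${}^{j}G_{s,T}[W_j(T,\cdot)]={}^{j}Y_s$ and on the observation, already recorded in the proof of Proposition~\ref{p2}, that for $(u^\varepsilon,v^\varepsilon)$ independent of $\mathcal{F}_t$ the conditional probability given $\mathcal{F}_t$ coincides with the unconditional one; and the diagonal choice of $k$, which forces one and the same pair $(u^\varepsilon,v^\varepsilon)$ to realize both (\ref{eq6}) and (\ref{eq7}) simultaneously.
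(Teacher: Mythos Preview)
Your proof is correct and follows essentially the same route as the paper: invoke Proposition~\ref{p2}, specialize its conclusion (with $s_1=s$, $s_2=T$, using ${}^{j}G_{s,T}[\Phi_j(X_T)]={}^{j}Y_s$) to obtain (\ref{eq6}), extract a convergent subsequence of the bounded payoffs to produce $(e_1,e_2)$, and conclude via Theorem~\ref{t1}. Your write-up is in fact a bit more careful than the paper's---you make explicit the diagonal choice of $k$ ensuring that a single pair realizes (\ref{eq6}) and (\ref{eq7}) simultaneously, and you note the monotonicity in~$\varepsilon$; the paper also exploits the independence of $(u^\varepsilon,v^\varepsilon)$ from $\mathcal{F}_t$ to observe that $J_j(t,x;u^\varepsilon,v^\varepsilon)$ is actually deterministic, which is slightly stronger than your formulation with expectations but not essential.
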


\begin{proof}\label{}
By Theorem \ref{t1} we only have to prove that, for all
$\varepsilon>0,$ there exists
$(u^{\varepsilon},v^{\varepsilon})\in\mathcal{U}_{t,T}\times\mathcal{V}_{t,T}$
which satisfies (\ref{eq6}) and (\ref{eq7}) for $s\in[t,T], j=1,2$.
For  $\varepsilon>0$, let us  consider
$(u^{\varepsilon},v^{\varepsilon})\in\mathcal{U}_{t,T}\times\mathcal{V}_{t,T}$
given by Proposition \ref{p2}, i.e., in particular,
$(u^{\varepsilon},v^{\varepsilon})$ is independent of $\mathcal
{F}_{t}$. Setting  $s_{1}=t$ and $s_{2}=T$ in Proposition \ref{p2},
we get (\ref{eq6}). Since $(u^{\varepsilon},v^{\varepsilon})$ is
independent of $\mathcal {F}_{t}$,
$J_{j}(t,x;u^{\varepsilon},v^{\varepsilon}), j=1,2,$ are
deterministic and
$\Big\{(J_{1}(t,x;u^{\varepsilon},v^{\varepsilon}),J_{2}(t,x;u^{\varepsilon},v^{\varepsilon})),
\varepsilon>0\Big\}$ is a bounded sequence. Therefore, we can choose
an accumulation point of this sequence, as $\varepsilon\rightarrow
0$. We denote this point by $(e_{1},e_{2})$. From Theorem \ref{t1}
we see that  $(e_{1},e_{2})$ is a Nash equilibrium payoff at
$(t,x)$.
 The proof is complete.
\end{proof}\vskip2mm

\section{Proof of Theorem \ref{t1}}

We now give the proof of Theorem \ref{t1}.
\begin{proof}
For arbitrarily fixed  $\varepsilon>0$ and some  $\varepsilon_{0}>0$
($\varepsilon_{0}$ depends on $\varepsilon$ and will be precise
later), let us assume  that
$(u^{\varepsilon_{0}},v^{\varepsilon_{0}})\in\mathcal
{U}_{t,T}\times \mathcal {V}_{t,T}$ satisfies (\ref{eq6}) and
(\ref{eq7}), i.e., for all $s\in[t,T]$ and $j=1,2,$
\begin{eqnarray}\label{e12}
\mathbb{P}\Big(\
^{j}Y^{t,x;u^{\varepsilon_{0}},v^{\varepsilon_{0}}}_{s}\geq
W_{j}(s,X^{t,x;u^{\varepsilon_{0}},v^{\varepsilon_{0}}}_{s})-\varepsilon_{0}\
|\ \mathcal {F}_{t}\Big)\geq 1-\varepsilon_{0}, \ \mathbb{P}-a.s.,
\end{eqnarray}
and
\begin{eqnarray}\label{e13}
|\mathbb{E}[J_{j}(t,x;u^{\varepsilon_{0}},v^{\varepsilon_{0}})]-
e_{j}|\leq\varepsilon_{0}.
\end{eqnarray}
We fix some partition: $t=t_{0}\leq t_{1}\leq\cdots \leq t_{m}=T$ of
$[t,T]$ and put $\tau=\sup\limits_{i}|t_{i}-t_{i+1}|$. Let us  apply
Lemma \ref{le1} to $u^{\varepsilon_{0}}$ and
$t+\delta=t_{1},\cdots,t_{m}$, successively. Then, for
$\varepsilon_{1}>0$ ($\varepsilon_{1}$ depends on $\varepsilon$ and
is specified later) we have the existence of  NAD strategies
$\alpha_{i}\in \mathcal {A}_{t,T}, i=1,\cdots,m$,  such that,  for
all
 $v\in \mathcal {V}_{t,T}$,
\begin{eqnarray}\label{eq8}
\alpha_{i}(v)&=&u^{\varepsilon_{0}}, \text{on}\ [t,t_{i}],\nonumber\\
^{2}Y^{t,x;\alpha_{i}(v),v}_{t_{i}}&\leq&
W_{2}(t_{i},X^{t,x;\alpha_{i}(v),v}_{t_{i}})+\varepsilon_{1},
\mathbb{P}-a.s.
\end{eqnarray}
For all $v\in \mathcal {V}_{t,T}$, we set
\begin{eqnarray*}\label{}
S^{v}&=&\inf\Big\{s\geq t \ |\ \lambda(\{r\in[t,s]: v_{r}\neq
v_{r}^{\varepsilon_{0}}\})>0\Big\},\\
t^{v}&=&\inf\Big\{t_{i}\geq S^{v} \ |\ i=1,\cdots, m\Big\}\wedge T.
\end{eqnarray*}
Here $\lambda$ denotes the Lebesgue measure on the real line
$\mathbb{R}$. We see that  $S^{v}$ and $t^{v}$ are stopping times
such that  $S^{v}\leq t^{v}\leq S^{v}+\tau$.

We put
\begin{equation*}\label{}
\alpha_{\varepsilon}(v)=\left\{
\begin{array}{ll}
 u^{\varepsilon_{0}}, & \ \text{on}\ [[ t, t^{v}]],\\
\alpha_{i}(v), & \ \text{on}\ (t_{i},T]\times \{t^{v}=t_{i}\}, 1\leq
i\leq m.
\end{array}
\right.
\end{equation*}
Then,  $\alpha_{\varepsilon}$ is an NAD strategy. It follows from
(\ref{eq8}) that
\begin{eqnarray}\label{eq9}
^{2}Y^{t,x;\alpha_{\varepsilon}(v),v}_{t^{v}}&=&
\sum\limits_{i=1}^{m} \
^{2}Y^{t,x;\alpha_{\varepsilon}(v),v}_{t_{i}}1_{\{t^{v}=t_{i}\}}\nonumber\\
&\leq&
\sum\limits_{i=1}^{m}W_{2}(t_{i},X^{t,x;\alpha_{\varepsilon}(v),v}_{t_{i}})1_{\{t^{v}=t_{i}\}}
+\varepsilon_{1}\nonumber\\
&=&
W_{2}(t^{v},X^{t,x;\alpha_{\varepsilon}(v),v}_{t^{v}})+\varepsilon_{1},
\ \mathbb{P}-a.s.
\end{eqnarray}
Let us  show that, for all $\varepsilon>0$ and $v\in \mathcal
{V}_{t,T}$,
\begin{eqnarray}\label{e1}
 J_{2}(t,x;\alpha_{\varepsilon}(v),v)
\leq J_{2}(t,x;u^{\varepsilon_{0}},v^{\varepsilon_{0}})
+\varepsilon, \quad
\alpha_{\varepsilon}(v^{\varepsilon_{0}})=u^{\varepsilon_{0}}.
\end{eqnarray}
Thanks to (\ref{eq9}), from Lemmas \ref{l18} and \ref{l8} we see
that there exists a positive constant $C$ such that
\begin{eqnarray}\label{eqn2}
 J_{2}(t,x,\alpha_{\varepsilon}(v),v)&=&
 ^{2}G^{t,x;\alpha_{\varepsilon}(v),v}_{t,t^{v}}[^{2}Y^{t,x,\alpha_{\varepsilon}(v),v}_{t^{v}}]\nonumber\\
&\leq &
^{2}G^{t,x;\alpha_{\varepsilon}(v),v}_{t,t^{v}}[W_{2}(t^{v},X^{t,x;\alpha_{\varepsilon}(v),v}_{t^{v}})
+\varepsilon_{1}]\nonumber\\
&\leq &
^{2}G^{t,x;\alpha_{\varepsilon}(v),v}_{t,t^{v}}[W_{2}(t^{v},X^{t,x;\alpha_{\varepsilon}(v),v}_{t^{v}})]
+C\varepsilon_{1}.
\end{eqnarray}
 Therefore, from Lemma \ref{l18}
\begin{eqnarray*}\label{}
&&|\
^{2}G^{t,x;\alpha_{\varepsilon}(v),v}_{t,t^{v}}[W_{2}(t^{v},X_{t^{v}}^{t,x;u^{\varepsilon_{0}},v^{\varepsilon_{0}}})]
-\ ^{2}G^{t,x;\alpha_{\varepsilon}(v),v}_{t,t^{v}}[W_{2}(t^{v},X^{t,x;\alpha_{\varepsilon}(v),v}_{t^{v}})]|\\
&\leq&C\mathbb{E}[|W_{2}(t^{v},X_{t^{v}}^{t,x;u^{\varepsilon_{0}},v^{\varepsilon_{0}}})
-W_{2}(t^{v},X^{t,x;\alpha_{\varepsilon}(v),v}_{t^{v}})|^{2}
\Big|\mathcal {F}_{t}]^{\frac{1}{2}}\\
 &\leq & C \mathbb{E}[|X_{t^{v}}^{t,x;u^{\varepsilon_{0}},v^{\varepsilon_{0}}}
-X^{t,x;\alpha_{\varepsilon}(v),v}_{t^{v}}|^{2}
\Big|\mathcal {F}_{t}]^{\frac{1}{2}}\\
 &\leq & C\tau^{\frac{1}{2}},\  \mathbb{P}-a.s.,
\end{eqnarray*}
for the last two inequalities we have used Proposition  \ref{pro}
and Lemma \ref{le2}. Then,  (\ref{eqn2}) yields
\begin{eqnarray*}\label{}
&& J_{2}(t,x,\alpha_{\varepsilon}(v),v)\\
&\leq &
^{2}G^{t,x;\alpha_{\varepsilon}(v),v}_{t,t^{v}}[W_{2}(t^{v},X_{t^{v}}^{t,x;u^{\varepsilon_{0}},v^{\varepsilon_{0}}})]
+C\varepsilon_{1}\\
&&+|\
^{2}G^{t,x;\alpha_{\varepsilon}(v),v}_{t,t^{v}}[W_{2}(t^{v},X_{t^{v}}^{t,x;u^{\varepsilon_{0}},v^{\varepsilon_{0}}})]
- \ ^{2}G^{t,x;\alpha_{\varepsilon}(v),v}_{t,t^{v}}[W_{2}(t^{v},X^{t,x;\alpha_{\varepsilon}(v),v}_{t^{v}})]|\\
&\leq &
^{2}G^{t,x;\alpha_{\varepsilon}(v),v}_{t,t^{v}}[W_{2}(t^{v},X_{t^{v}}^{t,x;u^{\varepsilon_{0}},v^{\varepsilon_{0}}})]
+C\varepsilon_{1}+C\tau^{\frac{1}{2}}.
\end{eqnarray*}
Putting
\begin{eqnarray}\label{e3}
\Omega_{s}=\Big\{\
^{2}Y^{t,x;u^{\varepsilon_{0}},v^{\varepsilon_{0}}}_{s}\geq
W_{2}(s,X^{t,x;u^{\varepsilon_{0}},v^{\varepsilon_{0}}}_{s})-\varepsilon_{0}\Big\},
s\in[t,T],
\end{eqnarray}
 we have
\begin{eqnarray}\label{eqn4}
&& J_{2}(t,x;\alpha_{\varepsilon}(v),v)\nonumber\\
 &\leq& \ ^{2}G^{t,x;\alpha_{\varepsilon}(v),v}_{t,t^{v}}
[\sum\limits_{i=1}^{m}W_{2}(t_{i},X_{t_{i}}^{t,x;u^{\varepsilon_{0}},v^{\varepsilon_{0}}})1_{\{t^{v}=t_{i}\}}]
+C\varepsilon_{1}+C\tau^{\frac{1}{2}}\nonumber\\
&\leq&^{2}G^{t,x;\alpha_{\varepsilon}(v),v}_{t,t^{v}}
[\sum\limits_{i=1}^{m}W_{2}(t_{i},X_{t_{i}}^{t,x;u^{\varepsilon_{0}},v^{\varepsilon_{0}}})
1_{\{t^{v}=t_{i}\}}1_{\Omega_{t_{i}}}]+C\varepsilon_{1}+C\tau^{\frac{1}{2}}
+I,
\end{eqnarray}
where $$I=|\ ^{2}G^{t,x;\alpha_{\varepsilon}(v),v}_{t,t^{v}}
[\sum\limits_{i=1}^{m}W_{2}(t_{i},X_{t_{i}}^{t,x;u^{\varepsilon_{0}},v^{\varepsilon_{0}}})1_{\{t^{v}=t_{i}\}}]
-\ ^{2}G^{t,x;\alpha_{\varepsilon}(v),v}_{t,t^{v}}
[\sum\limits_{i=1}^{m}W_{2}(t_{i},X_{t_{i}}^{t,x;u^{\varepsilon_{0}},v^{\varepsilon_{0}}})
1_{\{t^{v}=t_{i}\}}1_{\Omega_{t_{i}}}]|.$$
 Since $\Phi_{2},f_{2}$ and $h_{2}$ are bounded, from Lemma \ref{l1} we conclude that $W_{2}$ is bounded.
Therefore,
\begin{eqnarray}\label{eqn5}
I&\leq&\mathbb{E}[\sum\limits_{i=1}^{m}|W_{2}(t_{i},X_{t_{i}}^{t,x;u^{\varepsilon_{0}},v^{\varepsilon_{0}}})|^{2}
1_{\{t^{v}=t_{i}\}}1_{\Omega^{c}_{t_{i}}} \Big|\mathcal
{F}_{t}]^{\frac{1}{2}}\nonumber\\
&\leq& C\sum\limits_{i=1}^{m} \mathbb{P}(\Omega^{c}_{t_{i}}
|\mathcal{F}_{t})^{\frac{1}{2}}\leq Cm\varepsilon_{0}^{\frac{1}{2}},
\end{eqnarray}
where we have used (\ref{e12}) for the latter estimate.  From the
Lemmas \ref{l18},  \ref{l8} and (\ref{e3}) we have
\begin{eqnarray*}\label{}
&&^{2}G^{t,x;\alpha_{\varepsilon}(v),v}_{t,t^{v}}
[\sum\limits_{i=1}^{m}W_{2}(t_{i},X_{t_{i}}^{t,x;u^{\varepsilon_{0}},v^{\varepsilon_{0}}})
1_{\{t^{v}=t_{i}\}}1_{\Omega_{t_{i}}}]\\
&\leq& ^{2}G^{t,x;\alpha_{\varepsilon}(v),v}_{t,t^{v}}
[\sum\limits_{i=1}^{m}(^{2}Y^{t,x;u^{\varepsilon_{0}},v^{\varepsilon_{0}}}_{t_{i}}+\varepsilon_{0})
1_{\{t^{v}=t_{i}\}}1_{\Omega_{t_{i}}}]\\
&\leq& ^{2}G^{t,x;\alpha_{\varepsilon}(v),v}_{t,t^{v}}
[\sum\limits_{i=1}^{m}\
^{2}Y^{t,x;u^{\varepsilon_{0}},v^{\varepsilon_{0}}}_{t_{i}}
1_{\{t^{v}=t_{i}\}}1_{\Omega_{t_{i}}}+\varepsilon_{0}]\\
&\leq& ^{2}G^{t,x;\alpha_{\varepsilon}(v),v}_{t,t^{v}}
[\sum\limits_{i=1}^{m}\
^{2}Y^{t,x;u^{\varepsilon_{0}},v^{\varepsilon_{0}}}_{t_{i}}
1_{\{t^{v}=t_{i}\}}1_{\Omega_{t_{i}}}]+C\varepsilon_{0},
\end{eqnarray*}
and using the above arguments we also have
\begin{eqnarray*}\label{}
|\ ^{2}G^{t,x;\alpha_{\varepsilon}(v),v}_{t,t^{v}}
[\sum\limits_{i=1}^{m}\
^{2}Y^{t,x;u^{\varepsilon_{0}},v^{\varepsilon_{0}}}_{t_{i}}
1_{\{t^{v}=t_{i}\}}1_{\Omega_{t_{i}}}] -\
^{2}G^{t,x;\alpha_{\varepsilon}(v),v}_{t,t^{v}}
[\sum\limits_{i=1}^{m} \
^{2}Y^{t,x;\alpha_{\varepsilon}(v),v}_{t_{i}}1_{\{t^{v}=t_{i}\}}]|
\leq Cm\varepsilon_{0}^{\frac{1}{2}}.
\end{eqnarray*}
Consequently,
\begin{eqnarray*}\label{}
&&^{2}G^{t,x;\alpha_{\varepsilon}(v),v}_{t,t^{v}}
[\sum\limits_{i=1}^{m}W_{2}(t_{i},X_{t_{i}}^{t,x;u^{\varepsilon_{0}},v^{\varepsilon_{0}}})
1_{\{t^{v}=t_{i}\}}1_{\Omega_{t_{i}}}]\\
&\leq& ^{2}G^{t,x;\alpha_{\varepsilon}(v),v}_{t,t^{v}} [
^{2}Y^{t,x;u^{\varepsilon_{0}},v^{\varepsilon_{0}}}_{t^{v}}]+C\varepsilon_{0}+ Cm\varepsilon_{0}^{\frac{1}{2}}\\
&\leq&|\ ^{2}G^{t,x;\alpha_{\varepsilon}(v),v}_{t,t^{v}} [ \
^{2}Y^{t,x;u^{\varepsilon_{0}},v^{\varepsilon_{0}}}_{t^{v}}] -\
^{2}G^{t,x;u^{\varepsilon_{0}},v^{\varepsilon_{0}}}_{t,t^{v}} [ \
^{2}Y^{t,x;u^{\varepsilon_{0}},v^{\varepsilon_{0}}}_{t^{v}}]|\\&& +
\ ^{2}G^{t,x;u^{\varepsilon_{0}},v^{\varepsilon_{0}}}_{t,t^{v}} [ \
^{2}Y^{t,x;u^{\varepsilon_{0}},v^{\varepsilon_{0}}}_{t^{v}}]
+C\varepsilon_{0}+ Cm\varepsilon_{0}^{\frac{1}{2}}\\
&=&|\ ^{2}G^{t,x;\alpha_{\varepsilon}(v),v}_{t,t^{v}} [ \
^{2}Y^{t,x;u^{\varepsilon_{0}},v^{\varepsilon_{0}}}_{t^{v}}] -\
^{2}G^{t,x;u^{\varepsilon_{0}},v^{\varepsilon_{0}}}_{t,t^{v}} [
\ ^{2}Y^{t,x;u^{\varepsilon_{0}},v^{\varepsilon_{0}}}_{t^{v}}]|\\
&&+J_{2}(t,x;u^{\varepsilon_{0}},v^{\varepsilon_{0}})
+C\varepsilon_{0}+ Cm\varepsilon_{0}^{\frac{1}{2}}\\
&\leq&J_{2}(t,x;u^{\varepsilon_{0}},v^{\varepsilon_{0}})
+C\varepsilon_{0}+
Cm\varepsilon_{0}^{\frac{1}{2}}+C\tau^{\frac{1}{2}},
\end{eqnarray*}
where we have used the fact that
\begin{eqnarray*}\label{}
|\ ^{2}G^{t,x;\alpha_{\varepsilon}(v),v}_{t,t^{v}} [ \
^{2}Y^{t,x;u^{\varepsilon_{0}},v^{\varepsilon_{0}}}_{t^{v}}] -\
^{2}G^{t,x;u^{\varepsilon_{0}},v^{\varepsilon_{0}}}_{t,t^{v}} [ \
^{2}Y^{t,x;u^{\varepsilon_{0}},v^{\varepsilon_{0}}}_{t^{v}}]|\leq
C\tau^{\frac{1}{2}}.
\end{eqnarray*}
Indeed, let us consider the  following BSDE
\begin{eqnarray*}\label{}
    \left \{\begin{array}{rl}
      & y_s = \ ^{2}Y^{t,x;u^{\varepsilon_{0}},v^{\varepsilon_{0}}}_{t^{v}}+
\displaystyle\int_{s}^{t^{v}}f_{2}(r,X^{t,x;\alpha_{\varepsilon}(v),v}_r,
y_r,z_r,\alpha_{\varepsilon}(v_r), v_r)dr
                   +k_{t^{v}}-k_s -\displaystyle\int_{s}^{t^{v}}z_r dB_r,\\
      & y_{s}\geq
      h_{2}(s,X^{t,x;\alpha_{\varepsilon}(v),v}_s),  \qquad \qquad  s\in [t,t^{v}],\\
     &k_{t}=0, \qquad  \displaystyle \int_{t}^{t^{v}}(y_{r}-
      h_{2}(r,X^{t,x;\alpha_{\varepsilon}(v),v}_r))dk_{r}=0,
     \end{array}\right.
 \end{eqnarray*}
which compared with
\begin{eqnarray*}\label{}
    \left \{\begin{array}{rl}
      & \ ^{2}Y^{t,x;u^{\varepsilon_{0}},v^{\varepsilon_{0}}}_s
      =  \ ^{2}Y^{t,x;u^{\varepsilon_{0}},v^{\varepsilon_{0}}}_{t^{v}}+
\displaystyle\int_{s}^{t^{v}}
f_{2}(r,X^{t,x;u^{\varepsilon_{0}},v^{\varepsilon_{0}}}_r,\
^{2}Y^{t,x;u^{\varepsilon_{0}},v^{\varepsilon_{0}}}_r,\
^{2}Z^{t,x;u^{\varepsilon_{0}},v^{\varepsilon_{0}}}_r,u^{\varepsilon_{0}}_r,
v^{\varepsilon_{0}}_r)dr\\ & \hskip4cm +\
^{2}K^{t,x;u^{\varepsilon_{0}},v^{\varepsilon_{0}}}_{t^{v}}-\
^{2}K^{t,x;u^{\varepsilon_{0}},v^{\varepsilon_{0}}}_s
-\displaystyle\int_{s}^{t^{v}}\
^{2}Z^{t,x;u^{\varepsilon_{0}},v^{\varepsilon_{0}}}_rdB_r,\\
      & ^{2}Y^{t,x;u^{\varepsilon_{0}},v^{\varepsilon_{0}}}_{s}\geq
      h_{2}(s,X^{t,x;u^{\varepsilon_{0}},v^{\varepsilon_{0}}}_s),  \qquad \qquad  s\in [t,t^{v}],\\
     &^{2}K^{t,x;u^{\varepsilon_{0}},v^{\varepsilon_{0}}}_{t}=0,\quad
     \displaystyle\int_{t}^{t^{v}} (^{2}Y^{t,x;u^{\varepsilon_{0}},v^{\varepsilon_{0}}}_{r}-
      h_{2}(r,X^{t,x;u^{\varepsilon_{0}},v^{\varepsilon_{0}}}_r))
      d\ ^{2}K^{t,x;u^{\varepsilon_{0}},v^{\varepsilon_{0}}}_{r}=0,
     \end{array}\right.
 \end{eqnarray*}
Note that  $\alpha_{\varepsilon}(v)=u^{\varepsilon_{0}}, $ on $[[ t,
t^{v}]]$, $v=v^{\varepsilon_{0}}, $ on $[[t, S^{v}]]$, and from
Lemma \ref{l18} we obtain
\begin{eqnarray*}
&&|\ ^{2}G^{t,x;\alpha_{\varepsilon}(v),v}_{t,t^{v}} [
^{2}Y^{t,x;u^{\varepsilon_{0}},v^{\varepsilon_{0}}}_{t^{v}}] - \
^{2}G^{t,x;u^{\varepsilon_{0}},v^{\varepsilon_{0}}}_{t,t^{v}} [
^{2}Y^{t,x;u^{\varepsilon_{0}},v^{\varepsilon_{0}}}_{t^{v}}]|^{2}\\
&\leq & C \mathbb{E}[\int_t^{t^{v}}
|f_{2}(r,X^{t,x;\alpha_{\varepsilon}(v),v}_r,y_r,z_r,\alpha_{\varepsilon}(v)_r,
v_r)dr-f_{2}(r,X^{t,x;u^{\varepsilon_{0}},v^{\varepsilon_{0}}}_r,
y_r, z_r,u^{\varepsilon_{0}}_r, v^{\varepsilon_{0}}_r) |^2|\mathcal{F}_t]\\
&&+C \mathbb{E}[\sup\limits_{r\in[t,t^{v}]}
|h_{2}(r,X^{t,x;\alpha_{\varepsilon}(v),v}_r)-h_{2}(r,X^{t,x;u^{\varepsilon_{0}},v^{\varepsilon_{0}}}_r)
|^2|\mathcal{F}_t]^{\frac{1}{2}}\\
&=&C\mathbb{E}[\int_{S^{v}}^{t^{v}}
|f_{2}(r,X^{t,x;\alpha_{\varepsilon}(v),v}_r,y_r,z_r,\alpha_{\varepsilon}(v)_r,
v_r)dr-f_{2}(r,X^{t,x;u^{\varepsilon_{0}},v^{\varepsilon_{0}}}_r,
y_r, z_r,u^{\varepsilon_{0}}_r, v^{\varepsilon_{0}}_r) |^2|\mathcal{F}_t]\\
&&+C \mathbb{E}[\sup\limits_{r\in[S^{v},t^{v}]}
|X^{t,x;\alpha_{\varepsilon}(v),v}_r-X^{t,x;u^{\varepsilon_{0}},v^{\varepsilon_{0}}}_r|^2|\mathcal{F}_t]^{\frac{1}{2}}\\
 &\leq & C \mathbb{E}[\int_{S^{v}}^{t^{v}} 1_{\{v_r\neq
v^{\varepsilon_{0}}_r\}} |\mathcal {F}_t]+ C\tau^{\frac{1}{2}}\leq C
\mathbb{E}[t^{v}-S^{v}|\mathcal {F}_t]+ C\tau^{\frac{1}{2}}\leq
C\tau^{\frac{1}{2}},
\end{eqnarray*}
where we have used the boundedness of $f_{2}, b$ and $\sigma$.
Consequently,
\begin{eqnarray*}\label{}
&&^{2}G^{t,x;\alpha_{\varepsilon}(v),v}_{t,t^{v}}
[\sum\limits_{i=1}^{m}W_{2}(t_{i},X_{t_{i}}^{t,x;u^{\varepsilon_{0}},v^{\varepsilon_{0}}})
1_{\{t^{v}=t_{i}\}}1_{\Omega_{t_{i}}}]\\
&&\leq C\tau^{\frac{1}{2}}
+J_{2}(t,x;u^{\varepsilon_{0}},v^{\varepsilon_{0}})
+C\varepsilon_{0}+Cm\varepsilon_{0}^{\frac{1}{2}},
\end{eqnarray*}
and thus, (\ref{eqn4}) and  (\ref{eqn5}) yield
\begin{eqnarray*}\label{}
 J_{2}(t,x;\alpha_{\varepsilon}(v),v)
\leq J_{2}(t,x;u^{\varepsilon_{0}},v^{\varepsilon_{0}})
+C\varepsilon_{0}+Cm\varepsilon_{0}^{\frac{1}{2}}
+C\varepsilon_{1}+C\tau^{\frac{1}{4}}.
\end{eqnarray*}
We can choose $\tau>0, \varepsilon_{0}>0,$ and $ \varepsilon_{1}>0$
such that $C\varepsilon_{0}+Cm\varepsilon_{0}^{\frac{1}{2}}
+C\varepsilon_{1}+C\tau^{\frac{1}{4}}\leq \varepsilon$ and
$\varepsilon_{0}<\varepsilon.$ Thus,
\begin{eqnarray*}\label{}
 J_{2}(t,x;\alpha_{\varepsilon}(v),v)
\leq J_{2}(t,x;u^{\varepsilon_{0}},v^{\varepsilon_{0}})
+\varepsilon, v\in \mathcal {V}_{t,T}.
\end{eqnarray*}
Using  a symmetric argument we can construct $\beta_{\varepsilon}\in
\mathcal {B}_{t,T}$ such that, for all $u\in \mathcal {U}_{t,T}$,
\begin{eqnarray}\label{e2}
 J_{1}(t,x;u,\beta_{\varepsilon}(u))
\leq J_{1}(t,x;u^{\varepsilon_{0}},v^{\varepsilon_{0}})
+\varepsilon,\quad
\beta_{\varepsilon}(u^{\varepsilon_{0}})=v^{\varepsilon_{0}}.
\end{eqnarray}
Finally, from   (\ref{e1}), (\ref{e2}), (\ref{e13}) and Lemma
\ref{le3} it follows that
$(\alpha_{\varepsilon},\beta_{\varepsilon})$ satisfies Definition
\ref{d2}. Hence, $(e_{1},e_{2})$ is a Nash equilibrium payoff.
\end{proof}\vskip2mm


\begin{thebibliography}{99}





\bibitem{BCQ2011}
R. Buckdahn,     P. Cardaliaguet, M. Quincampoix,    Some recent
aspects of differential game theory. Dynamic Games and Applications
1 (2011) 74-114

\bibitem{BCR2004}
R. Buckdahn,  P.  Cardaliaguet,  C. Rainer,   Nash equilibrium
payoffs for nonzero-sum  Stochastic differential games. SIAM J.
Control Optim.  43  (2004)  624--642

\bibitem{BL2008}
 R. Buckdahn,   J. Li,   Stochastic differential games and viscosity
solutions of Hamilton-Jacobi-Bellman-Isaacs equations.  SIAM J.
Control  Optim. 47 (2008)  444--475


\bibitem{BL2007}
R. Buckdahn,     J. Li,    Stochastic differential games with
reflection and related obstacle problems for Isaacs equations,
arxiv0707.1133.



\bibitem{EKPPQ1997}
 N. El Karoui,   C.  Kapoudjian, E. Pardoux,    S.  Peng,  M.C.
 Quenez,
Reflected solutions of backward SDE's, and related obstacle problems
for PDE's.  Annals of Probability 25 (1997) 702-737


\bibitem{EPQ1997}
 N. El Karoui,       S.  Peng,  M.C.  Quenez, Backward stochastic
differential equation in finance.  Math. Finance  7  (1997) 1--71

\bibitem{FS1989}  W.H. Fleming,  P.E. Souganidis,     On the existence of value
functions of twoplayer, zero-sum stochastic differential games.
Indiana Univ. Math. J. 38 (1989) 293--314



\bibitem{HLM1997} S. Hamad\`{e}ne, ,   J. Lepeltier,  A.  Matoussi,   Double barrier backward
SDEs with continuous coefficient. In:  El Karoui Mazliak (Ed.):
Backward Stochastic Differential Equations, Pitman Research Notes in
Math. Series, vol. 364, 1997, pp. 161--175.


\bibitem{L2011}  Q. Lin,    A BSDE approach to Nash equilibrium payoffs for stochastic
differential games with nonlinear cost functionals. Stochastic
Processes and their Applications   122 (2012) 357--385.

\bibitem{L2011a} Q. Lin, : Nash equilibrium payoffs for stochastic
differential games with jumps and coupled nonlinear cost
functionals. ArXiv:1108.3695v1.



\bibitem{P1997}
S. Peng,  Backward stochastic differential equations---stochastic
optimization theory and viscosity solutions of HJB equations. In:
 J. Yan,   S. Peng,  S. Fang, L. Wu,  Topics in
Stochastic Analysis, Ch.2 (Chinese vers.), (1997).



\bibitem{WY2008}   Z.  Wu,  Z. Yu,    Dynamic programming principle for one kind of stochastic recursive
optimal control problem and Hamilton-Jacobi-Bellman equation. SIAM
J. Control Optim. 47 (2008) 2616--2641


\end{thebibliography}
\end{document}